\documentclass{amsart}
\usepackage{cmbright}
\usepackage{enumitem}
\usepackage{amsmath, amssymb}
\usepackage[hidelinks]{hyperref}
\usepackage{doi}
\newcommand{\R}{\mathbb R}
\newcommand{\C}{\mathbb C}
\newcommand{\Z}{\mathbb Z}
\newcommand{\N}{\mathbb N}
\newcommand{\Q}{\mathbb Q}
\newcommand{\A}{\mathcal A}
\newcommand{\B}{\mathcal B}
\newcommand{\ca}{\mathcal}
\newcommand{\ideal}[1]{\ca #1}

\newcommand{\Fin}{\operatorname{Fin}}
\newcommand{\Seq}{\operatorname{Seq}}
\newcommand{\dom}{\operatorname{dom}}
\newcommand{\img}{\operatorname{Im}}

\newcommand{\nin}{\not \in}

\theoremstyle{plain}
\newtheorem{thm}{Theorem}[section]
\newtheorem{prop}[thm]{Proposition}
\newtheorem{fact}[thm]{Fact}

\newtheorem{cor}[thm]{Corollary}

\newtheorem{lem}[thm]{Lemma}

\theoremstyle{definition}
\newtheorem{rem}[thm]{Remark}
\newtheorem{defn}[thm]{Definition}

\newtheorem{exa}[thm]{Example}

\newcommand{\Jac}{\textnormal{Jac}}

\newcommand{\Div}{\textnormal{Div}}

\newcommand{\ord}{\textnormal{ord}}
\newcommand{\ev}{\textnormal{ev}}

\newcommand{\ins}{\in^*}

\newcommand{\F}{\mathcal{F}}
\newcommand{\pF}{P_{\ca F}}
\newcommand{\ext}[1]{\operatorname{ext}(#1)}
\newcommand{\PA}{\mathsf{PA}}

\newcommand{\hn}{\mathbf N}
\newcommand{\hZ}{\mathbf Z}
\newcommand{\hQ}{\mathbf Q}

\newcommand{\Var}[1]{\operatorname{Var}(#1)}
\newcommand{\nVar}[2]{\operatorname{Var}_{#2} (#1)}
\newcommand{\Points}[1]{\operatorname{Points} (#1)}
\newcommand{\pt}[1]{{#1}^{\text{points}}}
\newcommand{\Curves}[1]{\operatorname{Curves}(#1)}
\newcommand{\nCurves}[2]{\operatorname{Curves}_{#2}(#1)}
\newcommand{\affPoints}[1]{\operatorname{affPoints} (#1)}
\newcommand{\affCurves}[1]{\operatorname{affCurves}(#1)}
\newcommand{\naffCurves}[2]{\operatorname{affCurves}_{#2}(#1)}
\newcommand{\affVar}[1]{\operatorname{affVar}(#1)}
\newcommand{\naffVar}[2]{\operatorname{affVar}_{#2}(#1)}

\newcommand{\pto}{\leadsto} 
\newcommand{\pfun}[2]{[#1 \pto #2]_{\F}}

\newcommand{\fun}[2]{[#1 \to #2]_{\F}}
\newcommand{\Poly}{{\hQ[{\tt x}_i]}_{i\in \hn}^{\text{def}}}
\newcommand{\Polyn}[1]{{#1}[{\tt x}_i]_{i \in \hn}^{\text{def}}}
\newcommand{\poly}[1]{{\hQ[{\tt x}_i]}_{i<#1}^\text{def}}
\newcommand{\polyn}[2]{{#1}[{\tt x}_i]_{i<#2}^\text{def}}

\setcounter{tocdepth}{1}

\author{Alessandro Berarducci}
\address[Alessandro Berarducci]{Dipartimento di Matematica, Universit\`a di Pisa, Largo Bruno Pontecorvo 5, 56127, Pisa, Italy}
\email{alessandro.berarducci@unipi.it}

\author{Francesco Gallinaro}
\address[Francesco Gallinaro]{Centro di Ricerca Matematica ``Ennio De Giorgi'', Scuola Normale Superiore, Piazza dei Cavalieri 3, 56126, Pisa, Italy}
\email{francesco.gallinaro@sns.it}

\date{13 March 2025, Revised 22 June 2026}
\subjclass[2020]{03C35, 03H05, 12L12, 12L15, 14H99}
\keywords{Model theory, weak second-order logic, algebraic curves}

\title{Projective curves and weak second-order logic} 

\begin{document}
\begin{abstract}Given an algebraically closed field $K$ of characteristic zero, we study the incidence relation between points and irreducible projective curves, or more precisely the poset of irreducible proper subvarieties  of $\mathbb P^2(K)$. Answering a question of Marcus Tressl, we prove that the poset interprets the field, and  it is in fact bi-interpretable with the two-sorted structure consisting of the field $K$ and a sort for its finite subsets. In this structure one can define the integers, so the theory is undecidable. When $K$ is the field of complex numbers we can nevertheless obtain a recursive axiomatization modulo the theory of the integers. We also show that the integers are stably embedded and that the poset of irreducible varieties over the complex numbers is not elementarily equivalent to the one over the algebraic numbers. 
\end{abstract}
\maketitle

\tableofcontents

\section{Introduction}
The logical strength of various topological lattices has been studied in \cite{Gre}. Continuing this line of research, Marcus Tressl proves, among other things, that the poset of Zariski closed proper subsets of the projective plane $\mathbb P^2(K)$ over an algebraically closed field $K$ of characteristic zero interprets the ring of integers, and therefore it has an undecidable theory \cite[7.1]{Tre}. In \cite[Section 8]{Tre} he asks whether the same holds if we restrict to irreducible varieties and whether $K$ is interpretable in the poset. We give a positive answer to both questions and we also prove that the complete theory of the poset is sensitive to the transcendence degree of the field. To this aim we use a bi-interpretability result which we discuss below. Our results also apply to the poset of reducible varieties since the irreducible ones are definable therein.  

Given a field $K$, let $\Var{K}$ be the poset of irreducible Zariski closed proper subsets of $\mathbb P^2(K)$ (points and curves) and note that $\Var{K}$ is interdefinable with the incidence relation between points and curves in $\mathbb P^2(K)$ (because there are no proper inclusions between irreducible curves). Given $n\in \N$, let $\nVar K {\leq n}$ be the substructure obtained by considering only curves of degree $\leq n$ and points. It is well-known that $\nVar K {\leq 1}$ interprets $K$ (after fixing some parameters to determine the projective frame); namely we can reconstruct the field from the incidence relation between points and lines (see \cite{Har,Hilbert}). Using this fact it is not difficult to show that, for any fixed $n$, $\nVar K {\leq n}$ is bi-interpretable with $K$ (Theorem \ref{thm:curves-field}). 

If we additionally assume that $K$ is algebraically closed of characteristic zero, a result of \cite{MD} implies that $\nVar K {\leq 2}$ is a definable subset of $\Var K$. The cited result says that a curve $C$ has degree $\leq 2$ if and only if, for any point $P$ on $C$, there is another curve that intersects $C$ only at $P$ (in the Appendix we give a proof using generalized Jacobians). Since we already know that $\nVar K {\leq 2}$ interprets $K$, it follows that $\Var K$ interprets $K$ as well. It is easy to see that the converse fails. However, we show that $\Var K$ is bi-interpretable with the two-sorted structure $(K, \Fin(K))$ obtained by adding to the field $K$ a sort for its finite subsets, together with the membership relation between $K$ and $\Fin(K)$ (Theorem \ref{thm:biinterprable}). In this structure we can define the subring of integers $\Z\subset K$, so the theory is undecidable. 

For $K$ algebraically closed, of characteristic zero, and of infinite transcendence degree, we show that the complete theory of $(K, \Fin(K))$ has the form $T_{\text{rec}} \cup T_\N$ where $T_{\text{rec}}$ is a recursively axiomatized theory and $T_\N$ is the complete theory of $(\N, +, \cdot)$ relativized to the predicate $\hn$ which defines $\N \subset K$ in $(K, \Fin(K))$ (Theorem \ref{thm:completeness}). In particular $T_{\text{rec}} \cup T_\N$ is the complete theory of $(\C, \Fin(\C))$. 

From the axiomatization it follows that a structure of the form $(K, \Fin(K))$ is elementarily equivalent to $(\C, \Fin(\C))$ if and only if $K$ is an algebraically closed field of characteristic zero and infinite transcendence degree. 
In particular, letting $\overline \Q$ be the algebraic closure of $\Q$, we have that $(\overline \Q, \Fin(\overline \Q))$ is not elementarily equivalent to $(\C, \Fin(\C))$, but we can nevertheless describe its theory (Proposition \ref{prop:Qbar}). Thanks to the bi-interpretability result we then conclude that $\Var {\overline \Q}$ is not elementarily equivalent to $\Var \C$ (Corollary \ref{cor:var}). 

Since the notion of finite set is not expressible in first-order logic, there are non-standard models $\mathcal K$ of $T_{\text{rec}}$ which are not of the form $(K, \Fin(K))$, and in which the predicate $\hn$ defines a non-standard model of Peano Arithmetic. It is important to study these models because the completeness of $T_{\text{rec}} \cup T_\N$ is obtained through a back and forth argument between saturated models with the same $\hn$. This uses a definable version of Hilbert's basis theorem for non-standard polynomials (Theorem \ref{thm:hilbert}) based on a proof of Dickson's lemma (Lemma \ref{lem:Dikson}) which can be carried out in Peano Arithmetic. We also use the existence of generic zeros for definable prime ideals of non-standard polynomials (Proposition \ref{prop:generic}). All this implies that the type over $\hn$ of any tuple of elements from a model of $T_{\text{rec}}$ is determined by a (possibly non-standard) polynomial ideal (Corollary \ref{cor:type ideal}). The development of abstract algebra for non-standard polynomials is a topic of independent interest, see for example the recent preprint \cite{MN}. Note however that the notion of non-standard polynomial in \cite{MN} is more restrictive than ours, since we are not assuming that $\hn$ is elementarily equivalent to $\N$. 

In a non-standard model $(K,\mathcal{F}(K))$ of $T_{\text{rec}}$, the sort $\mathcal{F}(K)$ is a \emph{definable finite power set} (Definition \ref{defn:hyperfinite}) of the field sort $K$, and its elements are codes for certain subsets of $K$, which we call \emph{hyperfinite}. These sets define a non-standard version of finiteness, which leads to non-standard versions of the notions of infinite transcendence degree, characteristic zero, and algebraic closedness, which are satisfied by the field sort $K$ (Definition \ref{defn:complete}).

We show that inside any such model $\mathcal K = (K, \F(K))$ we can define a model $\hn(\mathcal K) \subset K$ of first-order Peano Arithmetic and that the complete theory of $(K, \F(K))$ is determined by $T_{\text{rec}}$ and the complete theory of $\hn(\mathcal K)$
(Theorem \ref{thm:completeness-mod-peano}). When $\F(K) = \Fin(K)$, we have $\hn(\mathcal K) = \N$. Moreover, $\hn(\mathcal{K})$ is {\em stably embedded} in $(K, \mathcal{F}(K))$ in the following strong sense: any subset of $\hn(\mathcal{K})^n$ definable with parameters in $(K, \mathcal{F}(K))$ is already definable in $(\hn(\mathcal{K}), +, \cdot)$ (Theorem \ref{thm:stable-emebeddedness}). As a special case, we have that any subset of $\N^n$ definable with parameters in $(\C, \Fin(\C))$ is already definable in $(\N, +, \cdot)$, implying that there are at most countably many such subsets. By contrast, it is well-known that, if $\R$ is the real field, then in $(\R, \N)$ one can define, with parameters, every subset of $\N$. 

We finish this introduction by discussing some related results. 
Structures of the form $(K, \Fin(K))$ have been studied in \cite{Bau}. In particular \cite{Bau} shows that if $K$ is an infinite field, then $(K, \Fin(K))$ is mutually interpretable with the polynomial ring $K[x]$. Definability in polynomial rings has been studied for example in \cite[Chapter 3]{JL}, but as far as we know stable embeddedness of the integers has not been previously considered in this context.  

A natural problem is that of understanding when $K[x]$ is interpretable in $\Var K$. We do not even know whether $\Var {\R}$ interprets the field $\R$, but some related results appear in \cite{Ast, Tre}. 

Let us also note that if $K$ is the algebraic closure of a finite field then, by \cite{Wie}, every two curves in $\Var{K}$ can be swapped by an automorphism of $\Var{K}$, and therefore $\nVar{K}{\leq 1}$ is not definable. Therefore our interpretation of $K$ in $\Var{K}$ cannot be adapted to this setting.

A further problem is that of assessing the relative strength of $(K, \Fin(K))$ versus $(K, \Z)$ for various choices of $K$. We know that the multiplicative subgroup $t^\Z\subset \C$ generated by any transcendental number $t\in \C$ is not definable in $(\C, \Z)$ \cite[Proposition 2.2]{Toffalori2010}, but it is definable in $(\C, \Fin(\C))$ (Proposition \ref{prop:powers}). More generally, in $(\C, \Fin(\C))$ we can do certain recursive definitions which are not available in $(\C, \Z)$ (Theorem \ref{thm:recursion} and Proposition \ref{prop:recN}). This seems to rule out the possibility that $(\C, \Z)$ interprets $(\C, \Fin(\C))$, but we do not have a proof of this fact.

\subsection{Organisation of the paper} The paper is organised as follows. In Section \ref{sec:interpretations} we recall some model-theoretic preliminaries about Shelah's theory $T^{eq}$, and the back-and-forth method in the many-sorted setting. 
 
 In Section \ref{sec:wsostr} we study structures with a sort $\Fin(X)$ for the finite subsets of a given definable set $X$. In Sections
 \ref{sec:deffps} and \ref{sec:rec-on-sets} we introduce the notion of a definable finite power set $\F(X)$ of a definable set $X$, abstracting some properties of $\Fin(X)$ which are preserved under elementary extensions. The elements of $\F(X)$ code subsets of $X$ which we call hyperfinite. The main result of this part is a recursion theorem on hyperfinite sets (Theorem \ref{thm:recursion}). This can be interpreted as a universal property for $\F(X)$ in the definable category. 

 In Sections \ref{sec:PA0} and \ref{sec:PA} we work with models of $\PA$ which are definable in a fixed structure and satisfy a strengthening of the induction scheme. We first prove some general results assuming the existence of such a model in a given structure, and then focus on a class of fields equipped with a definable finite power set, in which these definable models of $\PA$ can be found.
 
 Sections \ref{sec:non-standard-poly} and \ref{sec:definable-ideals} study polynomial algebra in the hyperfinite setting. The results obtained there are used in Section \ref{sec:wso-complex} to give a complete axiomatization $T_{\text{rec}} \cup T_{\N}$ of $(\C, \Fin(\C))$ and prove the stable embeddedness of $(\N,+,\cdot)$ in $(\C, \Fin(\C))$. 
 
 In Section \ref{sec:Qbar} we briefly discuss the theory of $(\overline \Q, \Fin(\overline \Q))$. 
 
 In Section \ref{sec:poset} we introduce the poset of irreducible Zariski closed subsets of the projective plane, and we prove in Section \ref{sec:biinter} the bi-interpretability result. 
 
 Finally, in the Appendix, we prove the characterization of curves of degree at most 2. The result appears in \cite{MD} but it is stated there in a different setting (affine rather than projective) and depends on other papers not all of which we have been able to find.  To make the result more accessible to the reader we give a direct proof following a suggestion of Rita Pardini (which provides a proof in the smooth case).

\section{Model theoretic preliminaries}
\label{sec:interpretations}
\subsection{Shelah's theory $T^{eq}$} 
Given a first-order structure $\ca A$ (for instance a field), the family of definable sets in $\ca A$ (in the sense of first-order logic) is closed under boolean operations, cartesian products and projections on a subset of the coordinates, but in general it is not closed under taking quotients. However there is a canonical way, due to Shelah, to associate to $\ca A$ a new structure $\ca A^{eq}$ whose definable sets are stable under quotients. This is well-known (see for example \cite[Chapter 4]{Hod}) but we briefly recall some definitions to fix terminology and notation.

\begin{defn}\label{defn:eq}
Let $\ca A$ be a first-order structure, possibly many-sorted. The structure $\ca A^{eq}$ is obtained from $\ca A$ as follows: 
\begin{itemize}
\item All the sorts of $\ca A$ are also sorts of $\ca A^{eq}$. 
\item If $E \subseteq X\times X$ is a $0$-definable equivalence relation in $\ca A$, where $X$ is a product of some sorts of $\ca A$, there is a new sort $S_E$ in $\ca A^{eq}$ interpreted as the quotient $X/E$. 
\item For each $S_E$ as above, $\ca A^{eq}$ has a function symbol $p_E:X\to S_E$ interpreted as the natural projection from $X$  to $X/E$. 
\item The language $L^{eq}$ of $\ca A^{eq}$ contains, in addition to the sorts, all the functions $p_E$, and the symbols of the language $L$ of $\ca A$. The interpretation of the symbols of $L$ is the same in the two structures. 
\end{itemize} 
\end{defn}
The requirement that $E$ is $0$-definable ensures that 
if $\ca A$ and $\ca B$ are elementarily equivalent, then so are $\ca A^{eq}$ and $\ca B^{eq}$ (in particular, they have the same language). If $T$ is the complete theory of $\ca A$, the complete theory of $\ca A^{eq}$ is called $T^{eq}$. 

\begin{rem}
The family of definable sets (with parameters) of $\ca A^{eq}$ is closed under definable quotients in the following sense: if $R$ is a definable equivalence relation on a definable set $X$, then there are a definable set $Y$ in $\ca A^{eq}$ and a definable surjective map $p:X\to Y$ with kernel $R$.  

We can identify $Y$ with $X/R$ via the bijection sending $p(x)$ to the equivalence class $[x]_R$. If $p$ is understood from the context, we write $Y= X/R$ and we say that $Y$ is the quotient of $X$ with respect to $R$. Note however that $X/R$ is only determined up to a definable bijection unless we specify the projection $p$. 

If we have two such quotients with projections $p_1:X_1\to X_1/R_1$ and $p_2: X_2 \to X_2/R_2$, a function $f:X_1/R_1\to X_2/R_2$ is definable in $\ca A^{eq}$ if and only if the relation $\{(x,y) \in X_1\times X_2  \mid f(p_1(x)) = p_2(y)\}$ is definable. Similarly, a relation $f \subseteq X_1/R_1 \times X_2/R_2$ is definable if so is its preimage in $X_1\times X_2$ via $p_1\times p_2$.  
\end{rem}

\begin{exa}
Let $K$ be a field. Then the usual projection of $K^3 \setminus \{0\}$ on the projective plane $\mathbb P^2(K)$ is definable in $K^{eq}$. 
\end{exa}

\begin{defn}
Given two structures $\ca A$ and $\ca B$, possibly in different languages, we say that $\ca B$ is {\em interpretable} in $\ca A$ if $\ca B$ is isomorphic to a structure definable in $\ca A^{eq}$.
\end{defn}

A prominent example is the interpretation of the field of real numbers within a model of Euclid's axioms of geometry - or rather, their modern formulation given by Hilbert \cite{Hilbert}. Some ingredients of the proof will appear in Proposition \ref{prop:field-interpretation}.

\begin{defn}\label{defn:biinterpretable}
If two structures $\ca A$ and $\ca B$ are interpretable in each other, then by composing the interpretations we obtain an interpretation of $\ca A$ in itself and of $\ca B$ in itself. This means that there is a structure $\ca A'$ definable in $\ca A^{eq}$ which is isomorphic to $\ca A$ and a structure $\ca B'$ definable in $\ca B^{eq}$ which is isomorphic to $\ca B$. A priori the isomorphisms $\ca A' \cong \ca A$ and $\ca B' \cong \ca B$ need not be definable. However if the first one is definable in $\ca A^{eq}$ and the second one is definable in $\ca B^{eq}$, then we say that $\ca A$ and $\ca B$ are \emph{bi-interpretable}. 
\end{defn} 

\subsection{Relative elimination of quantifiers}
Let $T$ be an $L$-theory and let $\Gamma$ be a set of $L$-sentences ($L$-formulas without free variables) which is closed under boolean connectives. If $\ca M$ and $\ca N$ are models of $T$, we write $\ca M\equiv_{\Gamma} \ca N$ if $\ca M$ and $\ca N$ give the same truth value to every sentence in $\Gamma$. 
When $\Gamma$ is the set of all $L$-sentences we obtain the notion of elementary equivalence $\ca M\equiv \ca N$. If $\Gamma = \{\varphi\}$ contains a single formula $\varphi$ we write $\ca M \equiv_\varphi \ca N$. An easy application of the compactness theorem yields the following well-known result. 

\begin{prop}\label{prop:separation}
Let $\varphi$ be an $L$-sentence. Suppose that for all models $\ca M, \ca N$ of $T$, $\ca M \equiv_{\Gamma} \ca N\implies \ca M \equiv_\varphi \ca N$. Then $\varphi$ is equivalent, in $T$, to a sentence in $\Gamma$. 
\end{prop}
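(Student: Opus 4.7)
The plan is to run the standard Henkin-style compactness argument that separates $\varphi$ from its negation by $\Gamma$-sentences. Let
\[
\Gamma(\varphi) = \{\psi \in \Gamma : T \cup \{\varphi\} \models \psi\}
\]
be the set of $\Gamma$-consequences of $T \cup \{\varphi\}$. If I can prove that $T \cup \Gamma(\varphi) \models \varphi$, then by compactness there are $\psi_1, \dots, \psi_n \in \Gamma(\varphi)$ such that $T \models (\psi_1 \wedge \cdots \wedge \psi_n) \to \varphi$, and since $\Gamma$ is closed under boolean connectives, $\psi := \psi_1 \wedge \cdots \wedge \psi_n$ lies in $\Gamma$. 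Because each $\psi_i$ is a consequence of $T \cup \{\varphi\}$, we also have $T \models \varphi \to \psi$, and we are done.

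To establish $T \cup \Gamma(\varphi) \models \varphi$, I would argue by contradiction. Assume there is a model $\ca N \models T \cup \Gamma(\varphi)$ with $\ca N \models \neg \varphi$. Let
\[
\Delta := \{\psi \in \Gamma : \ca N \models \psi\}
\]
be the complete $\Gamma$-type of $\ca N$. I claim that $T \cup \{\varphi\} \cup \Delta$ is consistent. Otherwise, by compactness, there exist $\psi_1, \dots, \psi_k \in \Delta$ with $T \cup \{\varphi\} \models \neg(\psi_1 \wedge \cdots \wedge \psi_k)$, meaning $\neg(\psi_1 \wedge \cdots \wedge \psi_k) \in \Gamma(\varphi)$; but this latter sentence lies in $\Gamma$ (by closure under boolean operations), so it should hold in $\ca N$, contradicting $\ca N \models \psi_1 \wedge \cdots \wedge \psi_k$. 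Hence $T \cup \{\varphi\} \cup \Delta$ has a model $\ca M$, and by construction $\ca M \equiv_\Gamma \ca N$. The hypothesis then forces $\ca M \equiv_\varphi \ca N$, contradicting $\ca M \models \varphi$ and $\ca N \models \neg \varphi$.

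The conceptually delicate point is making sure that the closure of $\Gamma$ under boolean connectives is used in the right place, namely to throw the negated conjunction $\neg(\psi_1 \wedge \cdots \wedge \psi_k)$ back into $\Gamma$ so that it qualifies as an element of $\Gamma(\varphi)$. Everything else is routine compactness; no subtlety about the specific theory $T$ or the structure of $\Gamma$ beyond this closure is required. I do not anticipate a genuine obstacle — the proof is essentially a diagram-chase, and the statement is a textbook consequence of compactness.
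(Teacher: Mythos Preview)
Your argument is correct and is precisely the standard compactness proof the paper has in mind; the paper itself omits the details, stating only that the proposition is a well-known easy application of the compactness theorem.
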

We need a refinement in which $\Gamma$ is allowed to have free variables, possibly restricted to a definable predicate. We use the term {\em definable predicate} as a synonym for {\em formula}, but if $T$ is a theory, a definable predicate is also understood as an equivalence class of formulas modulo provable equivalence in $T$. Given a definable predicate $X$ in $T$ and a model $\ca M$ of $T$, we write $X(\ca M)$ for the interpretation of $X$ in $\ca M$. If $\bar { x}$ is a tuple of variables of the appropriate sorts, sometimes we write $\bar { x} \in X$ instead of $X(\bar { x})$ to express the fact that $\bar { x}$ satisfies the predicate $X$.  We write $S_x$ for the sort of the variable $x$. 

\begin{prop} \label{prop:EQ} Let $\Gamma$ be a class of $L$-formulas in the free variables $\bar {x} = (x_1,\ldots, x_n)$ closed under conjunctions, disjunctions, and negation, and let $\varphi(\bar {x})$ be an $L$-formula. Let $X \subset S_{x_1}\times \ldots \times S_{x_n}$ be a definable predicate in the theory $T$. 
Suppose that for all models  $\ca M, \ca N$ of $T$ and every choice of parameters $(a_1, \ldots, a_n) \in X(\ca M)$ and $(b_1, \ldots, b_n) \in X(\ca N)$ we have
$$\ca M, a_1, \ldots, a_n \equiv_\Gamma \ca N, b_1, \ldots, b_n \implies \ca M, a_1, \ldots, a_n \equiv_\varphi \ca N, b_1, \ldots, b_n.$$
Then there is a formula $\gamma(\bar {x}) \in \Gamma$ such that $T$ proves $\forall \bar {x} \in X \; (\varphi(\bar {x}) \iff \gamma (\bar {x}))$. 
\end{prop}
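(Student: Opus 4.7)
The plan is to adapt the standard compactness-based separation argument (of the kind behind Proposition \ref{prop:separation}) to handle formulas whose parameters are restricted to the definable predicate $X$. First I would introduce fresh constants $\bar c = (c_1, \ldots, c_n)$ of the appropriate sorts $S_{x_i}$ and work in the theory $T' := T \cup \{X(\bar c)\}$, so that the task becomes showing that $\varphi(\bar c)$ is $T'$-equivalent to some $\gamma(\bar c)$ with $\gamma \in \Gamma$.

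The key step is to introduce
\[
\Sigma := \{\gamma(\bar c) : \gamma \in \Gamma \text{ and } T' \cup \{\varphi(\bar c)\} \vdash \gamma(\bar c)\}
\]
and prove the converse $T' \cup \Sigma \vdash \varphi(\bar c)$. I would argue by contradiction, picking $(\mathcal N, \bar b) \models T' \cup \Sigma \cup \{\neg\varphi(\bar c)\}$ and showing that $T' \cup \{\varphi(\bar c)\}$ is consistent with the full $\Gamma$-diagram of $\bar b$ in $\mathcal N$. If it were not, then by compactness there would exist $\gamma_1, \ldots, \gamma_k \in \Gamma$ satisfied at $\bar b$ with $T' \cup \{\varphi(\bar c)\} \vdash \neg\gamma_1(\bar c) \vee \cdots \vee \neg\gamma_k(\bar c)$; closure of $\Gamma$ under negation and disjunction would package the right-hand side into a single $\delta \in \Gamma$ with $\delta(\bar c) \in \Sigma$, contradicting $(\mathcal N, \bar b) \models \Sigma$. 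Taking a model $(\mathcal M, \bar a)$ of the joint theory, I obtain tuples $\bar a \in X(\mathcal M)$, $\bar b \in X(\mathcal N)$ with $\mathcal M, \bar a \equiv_\Gamma \mathcal N, \bar b$ (closure under negation is used once more to turn one-sided $\Gamma$-agreement into full agreement), while $\varphi$ holds at $\bar a$ and fails at $\bar b$, contradicting the hypothesis.

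Once the main claim $T' \cup \Sigma \vdash \varphi(\bar c)$ is in place, compactness extracts finitely many $\gamma_1, \ldots, \gamma_m$ with $\gamma_i(\bar c) \in \Sigma$ and $T' \vdash \gamma_1(\bar c) \wedge \cdots \wedge \gamma_m(\bar c) \to \varphi(\bar c)$; setting $\gamma := \gamma_1 \wedge \cdots \wedge \gamma_m$, closure under conjunction puts $\gamma$ in $\Gamma$, while the converse direction $T' \vdash \varphi(\bar c) \to \gamma(\bar c)$ is built into the definition of $\Sigma$. Since $\bar c$ are arbitrary constants satisfying $X$, replacing them by universally quantified variables $\bar x$ ranging over $X$ yields the desired equivalence. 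The argument has no real obstacle, but it is worth flagging that all three Boolean closure hypotheses on $\Gamma$ are used essentially: negation to promote partial $\Gamma$-agreement into $\equiv_\Gamma$, disjunction to assemble the single separating formula $\delta$, and conjunction to collapse the finite list of $\gamma_i$ into one $\gamma$.
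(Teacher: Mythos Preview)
Your proposal is correct and follows the same approach as the paper: introduce fresh constants $\bar c$ and pass to the theory $T' = T \cup \{X(\bar c)\}$, reducing the problem to the sentence case. The paper then simply invokes Proposition~\ref{prop:separation}, whereas you unfold the standard compactness/separation argument that proves it; your details are fine and the use of the Boolean closure hypotheses is accurate.
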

\begin{proof}
Apply Proposition \ref{prop:separation} to the $(L\cup \{\bar {x}\})$-theory $T \cup \{X(\bar {x})\}$. 
\end{proof}

\subsection{Partial isomorphisms}
Given two sets $X$ and $Y$, we say that $f$ is partial function from $X$ to $Y$, written $f:X\pto Y$, if $f$ is a function from a subset of $X$ to $Y$. If $f$ is total, namely $\dom(f) = X$, we write $f:X\to Y$ with the usual arrow sign.  
We use ``map'' and ``function'' as synonyms.  

\begin{defn} \label{defn:back forth}
A family $\ca G$ of partial maps $\iota : X \pto Y$ between sets $X, Y$ has the back and forth property if:
\begin{itemize}
\item for every $\iota  \in \ca G$ and $a\in X$ there is $\eta  \in \ca G$ extending $\iota$ with $a\in \dom(\eta )$;
\item for every $\iota  \in \ca G$ and $b \in Y$, there is $\eta  \in \ca G$ extending $\iota$ with $b\in \img (\eta )$. 
\end{itemize}
\end{defn}

\begin{defn}
If $L$ is a many-sorted language and $\ca M, \ca N$ are $L$-structures, 
a partial map $\iota : \ca M \pto \ca N$ is a partial function from the union of the sorts of $\ca M$ to the union of the sorts of $\ca N$ which preserves the sorts: if $\iota(x) = y$, the sort of $x$ in $\ca M$ equals the sort of $y$ in $\ca N$. We write $a \in \ca M$ if $a$ belongs to the union of the sorts of $\ca M$. 
\end{defn}

\begin{defn}\label{defn:piso}
We say that a partial map $\iota: \ca M \pto \ca N $ between two $L$-structures is a {\em partial isomorphism} if for every $a_1, \ldots, a_n \in \dom(\iota )$ and every quantifier free formula 
$\varphi(x_1, \ldots, x_n)$, with variables appropriate for the sorts of $a_1, \ldots, a_n$, we have: 
\begin{equation*}
\ca M \models \varphi(a_1, \ldots, a_n) \iff \ca N \models \varphi (\iota (a_1), \ldots, \iota (a_n))
\end{equation*}
\end{defn}

The following result is well-known and can be easily proved by induction on the complexity of the formulas. 

\begin{prop}\label{prop:back forth}
If $\ca G$ is a family of partial isomorphisms $\iota: \ca M \pto \ca N$ with the back and forth property, then every $\iota \in \ca G$ is an \emph{elementary map}, namely the equivalence in Definition \ref{defn:piso} holds for every formula of the language, not only for the quantifier free formulas. If such a family $\ca G$ exists and is non-empty, then $\ca M$ and $\ca N$ are elementarily equivalent. 
\end{prop}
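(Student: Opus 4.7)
The plan is a standard induction on formula complexity, where the back-and-forth property furnishes exactly what is needed to handle the quantifier case. Concretely, I would prove by induction on the complexity of the $L$-formula $\varphi(x_1,\ldots,x_n)$ the statement: for every $\iota \in \ca G$ and every $a_1,\ldots,a_n \in \dom(\iota)$,
$$\ca M \models \varphi(a_1,\ldots,a_n) \iff \ca N \models \varphi(\iota(a_1),\ldots,\iota(a_n)).$$
Note that the quantification is over all $\iota \in \ca G$, not a fixed one; this uniformity is what makes the induction go through.

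For the base case (quantifier-free formulas), the equivalence is built into Definition \ref{defn:piso}. For the boolean connectives, the step is routine: $\neg$ swaps the two sides, and $\wedge$ and $\vee$ commute with the equivalence. The interesting case is the existential quantifier, since the universal one follows by duality or by dualising the argument. Suppose $\varphi(\bar x)$ is $\exists y\,\psi(\bar x, y)$ and $\ca M \models \varphi(\bar a)$. Pick a witness $c \in \ca M$ with $\ca M \models \psi(\bar a, c)$. By the forth property, extend $\iota$ to some $\eta \in \ca G$ with $c \in \dom(\eta)$; set $d := \eta(c)$. Apply the inductive hypothesis to the partial isomorphism $\eta$ and the formula $\psi$: since $\eta$ extends $\iota$, we have $\eta(a_i) = \iota(a_i)$ for each $i$, so $\ca N \models \psi(\iota(\bar a), d)$, giving $\ca N \models \varphi(\iota(\bar a))$. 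The converse direction uses the back property symmetrically, replacing the chosen witness in $\ca N$ by a preimage in $\ca M$ via an extension of $\iota$.

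For the final claim about elementary equivalence, take any $\iota \in \ca G$ (non-empty by assumption) and apply the result just proved to an arbitrary $L$-sentence, i.e.\ a formula with no free variables; the empty tuple trivially lies in $\dom(\iota)$, so $\ca M \models \sigma \iff \ca N \models \sigma$ for every sentence $\sigma$.

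The only subtle point, and the one I would flag as the key idea, is precisely that the induction must be stated uniformly over all members of $\ca G$, so that when the existential case forces us to pass from $\iota$ to an extension $\eta$ containing the witness, the inductive hypothesis remains available. This is exactly what the back-and-forth property is designed to provide, and it is the reason the argument closes without needing any saturation or cardinality hypotheses on $\ca M, \ca N$.
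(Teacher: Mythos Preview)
Your proof is correct and follows exactly the approach the paper indicates: the paper merely states that the result ``is well known and can be easily proved by induction on the complexity of the formulas'' without spelling out details, and you have supplied those details in the standard way, including the crucial observation that the induction must be stated uniformly over all $\iota \in \ca G$ so that passing to an extension $\eta$ in the quantifier step remains within the scope of the inductive hypothesis.
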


Recall that a theory $T$ is complete if and only if any two of its models are elementarily equivalent. Thus, in order to prove that a theory is complete, it suffices to show that any two models $\ca M, \ca N$ admit elementary extensions $\ca M'\succ \ca M, \ca N'\succ \ca N$ for which there is a non-empty family $\ca G$ of partial isomorphisms $\iota: \ca M' \pto \ca N'$ with the back and forth property. 

\section{Weak second-order structures}\label{sec:wsostr}

In weak second-order logic we can quantify over finite sets. This is also possible in first-order logic by adding a new sort whose elements correspond to the finite subsets of a definable set. 

\begin{defn} 
Given a structure $A$ and a definable set $X$ in $A$, let $(A, \in, \Fin (X))$ be the structure obtained from $A$ by adding a new sort $\Fin (X)$ for the finite subsets of $X$ and a relation symbol ``$\in$'' expressing the membership of elements of $X$ to elements of $\Fin (X)$. 
\end{defn}

Given a structure $A$ and a definable set $X$ in $A^{eq}$, it may happen that an isomorphic copy of $\Fin(X)$ is already definable in $A^{eq}$ in the sense of the following definition. 

\begin{defn}
We say that {\em $\Fin(X)$ is internal to $A$} if there are a definable set $Y$ of $A^{eq}$ and a definable relation $\ins$ in $A^{eq}$ such that the finite subsets of $X$ are exactly the sets of the form $\{x\in X \mid x\ins y\}$ with  $y\in Y$. In this situation $(A, \in, \Fin(X))$ is naturally isomorphic to $(A, \ins, Y)$, so our terminology is justified.
\end{defn}

Given a structure $A$ and a definable set $X$ in $A$, it is easy to see that $\Fin(X)$ is internal to $(A, \Fin(X^2))$: we can take as $Y$ the quotient of $\Fin(X^2)$ under the relation which identifies $B_1,B_2 \in \Fin(X^2)$ if and only if the set of first coordinates of elements of $B_1$ and $B_2$ coincide. More generally, if $X$ is infinite, $\Fin(X^n)$ is internal to $(A, \Fin(X^2))$ for all $n\in \N$, but not always to $(A, \Fin(X))$, as illustrated by the following proposition.

\begin{prop} \label{prop:monadic} \mbox{}
\begin{enumerate}
\item There is an infinite structure $A$ such that the theory of $(A, \Fin(A))$ is decidable \cite{Tre}. 
\item If $A$ is an infinite set, $\Fin(A^n)$ is internal to $(A, \Fin(A^2))$ for all $n\in \N$. 
\item If $A$ is an infinite set, the structure $(A, \Fin(A^2))$ interprets $(\N,+,\cdot)$, so its theory is undecidable. 
\end{enumerate} 
\end{prop}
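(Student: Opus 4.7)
Part (1) is directly attributed to Tressl in the statement, so I would simply cite \cite{Tre}; the standard witness is the pure infinite set (equality only), for which $(A,\Fin(A))$ admits quantifier elimination in a language of cardinality comparisons.

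For (2), my plan is, for fixed $n$, to code a finite subset $F\subseteq A^n$ of size $k$ by picking $k$ distinct index elements $d_1,\ldots,d_k\in A$ (possible because $A$ is infinite and $F$ finite) and representing $F$ as an $n$-tuple $(R_1,\ldots,R_n)\in \Fin(A^2)^n$ in which each $R_i$ is a finite function $D\to A$ with common domain $D=\{d_1,\ldots,d_k\}$, sending $d_j$ to the $i$-th coordinate of the $j$-th element of $F$. First I would verify that the set $C_n\subseteq\Fin(A^2)^n$ of admissible codes (tuples of finite functions sharing a common domain) is $\emptyset$-definable. Then I would define on $C_n$ the equivalence relation $\sim$ identifying two codes whenever there is a bijection between their two domains intertwining the corresponding functions; since a bijection is itself an element of $\Fin(A^2)$ (a left- and right-unique total relation), this relation is definable. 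The sort $\Fin(A^n)$ in the interpretation is then $C_n/\sim$, with membership $(a_1,\ldots,a_n)\in[(R_1,\ldots,R_n)]$ expressed by $\exists d\;\bigwedge_i (d,a_i)\in R_i$. Surjectivity of the coding map is precisely where the infinitude of $A$ enters.

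For (3), I interpret $\N$ as $\Fin(A)/\approx$, where $F\approx G$ iff there exists a bijection between $F$ and $G$, coded as an element of $\Fin(A^2)$. The empty set provides $0$. Addition $[F]+[G]=[H]$ is defined by the existence of disjoint representatives $F',G'\in\Fin(A)$ with $F\approx F'$, $G\approx G'$ and $F'\cup G'\approx H$, which exist because $A$ is infinite. Multiplication $[F]\cdot[G]=[H]$ is defined by $F\times G\approx H$, where $F\times G$ sits naturally in $\Fin(A^2)$ and the bijection linking it to $H\in\Fin(A)$ is itself a finite binary relation. Standard computations show that these operations satisfy the axioms of $(\N,+,\cdot)$, so we obtain an interpretation, and undecidability is immediate from G\"odel's theorem applied to first-order arithmetic.

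The main non-trivial point in both (2) and (3) is that $\Fin(A^2)$ supports quantification over bijections and functional relations between finite subsets of $A$; this is precisely what distinguishes the binary case from the monadic one exhibited in (1). Once the definability of ``being a function on'' and ``being a bijection between'' elements of $\Fin(A^2)$ is set up, verifying that every defining formula pulls back to a $\emptyset$-definable formula in $(A,\Fin(A),\Fin(A^2))$ is routine; the genuine use of the hypothesis that $A$ is infinite appears only in showing that codes exist of every required size.
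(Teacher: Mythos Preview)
Your overall strategy matches the paper's: code $n$-tuples via index elements in $A$ with coordinate projections living in $\Fin(A^2)$, then build $\N$ as equipotence classes. Two technical slips need repair, however.

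In (2), your equivalence relation (a bijection of index-domains intertwining the $R_i$) is strictly finer than ``codes the same subset of $A^n$'' unless you also require the induced map $D\to A^n$, $d\mapsto(R_1(d),\ldots,R_n(d))$, to be injective. For instance, with $n=1$ a constant function on a two-element domain and a constant function on a one-element domain code the same singleton but admit no domain bijection, so your quotient $C_n/{\sim}$ is not $\Fin(A^n)$. Either add that injectivity clause to the definition of $C_n$ (it is definable), or---as the paper does---take the equivalence to be simply ``the two codes determine the same set'', which is directly expressible from your membership formula and sidesteps the auxiliary bijection entirely.

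In (3), the graph of a bijection between $F\times G\subseteq A^2$ and $H\subseteq A$ lives in $A^3$, so it is not literally a single element of $\Fin(A^2)$ as you write. The paper deals with this by first invoking (2) to obtain $\Fin(A^3)$ and then quantifying over bijections there. You can also bypass $\Fin(A^3)$ by coding the inverse $H\to F\times G$ as a \emph{pair} of functions $H\to A$, each genuinely in $\Fin(A^2)$, and expressing bijectivity of the combined map; either route is fine, but ``a finite binary relation'' in the singular is not.
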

\begin{proof}
(1) In \cite[Corollary 3.5]{Tre} the author shows, by a reduction to Rabin's results in \cite{Rabin}, that if $A$ is a dense linear order, then its weak monadic second-order theory is decidable. The setting is slightly different since \cite{Tre} includes $A$ in $\Fin(A)$ identifying a point with its singleton, but this is irrelevant.

(2) For simplicity take $n=3$. The idea is to code an element $X\in \Fin(A^3)$ by a triple $(X_1,X_2,X_3)\in \Fin(A^2)\times \Fin(A^2) \times \Fin(A^2)$ as follows. We say that $(a,b,c)\in A^3$ belongs to the set $X$ coded by $(X_1, X_2, X_3)$ if there is $t\in A$ such that $(a,t)\in X_1, (b,t)\in X_2, (c,t) \in X_3$. We introduce an equivalence relation $R$ by stipulating that two triples $(X_1,X_2,X_3)$ and $(Y_1,Y_2,Y_3)$ in  $\Fin(A^2)^3$ are $R$-equivalent if they code the same sets. 

We must show that every element of $\Fin(A^3)$ can be coded in this way. The empty set and the singletons are easily coded. Suppose that $X\subseteq A^3$ is coded by $(X_1, X_2,X_3)\in \Fin(A^2)^3$ and $u = (u_1,u_2,u_3) \in A^3$ is an arbitrary element. We must show that $X\cup \{u\}$ can be coded. To this aim we choose $t\in A$ such that $X_1 \cup X_2\cup X_3$ contains no element of the form $(x,t)$ (we use the fact that $A$ is infinite) and define $X_1 = Y_1 \cup \{(u_1,t)\}$, $Y_2 = X_2 \cup \{(u_2, t)\}$, $Y_3 = X_3 \cup \{(u_3,t)\}$. Then  $X\cup \{u\}$ is coded by $(Y_1,Y_2,Y_3)$. One can thus obtain an interpretation of $(A, \Fin(A^3))$. 

(3) By the previous point it suffices to show that $(A,\Fin(A), \Fin(A^2), \Fin(A^3))$ interprets $(\N, +, \cdot)$. Quantifying over $\Fin(A^2)$ we can express the fact that two elements of $\Fin(A)$ are equipotent (there is a bijection between the two sets) and we can define $\N$ as the quotient of $\Fin(A)$ modulo equipotence. The addition of two elements of $\N$ is defined using disjoint unions and multiplication is defined using cartesian products. The latter requires $\Fin(A^3)$ because, given $X,Y,Z\in \Fin(A)$, we must be able to say that there is a bijection $f\in \Fin(A^3)$ between $X\times Y\in \Fin(A^2)$ and $Z$. 
\end{proof}

The difference between $\Fin(A)$ and $\Fin(A^2)$ becomes irrelevant if $A$ is an infinite field because of the following lemma. 

\begin{lem} \label{lem:sets-of-pairs}Let $K$ be an infinite field. Then $\Fin(K^2)$ is internal to $(K,\Fin(K))$. 
\end{lem}

\begin{proof} 
	The idea of the proof was suggested by Marcello Mamino. Given a finite set $X \subset K^2$ consider its projections $$A = \{a \mid \exists b. (a,b) \in X\}, \quad B = \{b \mid \exists a. (a,b)\in X\}$$ to the Cartesian axes. We claim that there are $\alpha, \beta \in K$ such that the projection 
	$$
	p_{\alpha, \beta}: A\times B \to K, \quad (x,y)\mapsto \alpha x + \beta y
	$$ 
	is injective. Given this, we code $X\in \Fin(K^2)$ as the quintuple $(A,B,\alpha, \beta, C)$ where 
	$$
	C = p_{\alpha, \beta}(X) = \{\alpha x + \beta y \mid (x,y) \in X\}.
	$$
	Membership in $X$ is definable in terms of codes: $(a,b)$ belongs to the set coded by $(A,B,\alpha, \beta, C)$ if $a\in A, b\in B$ and $\alpha a + \beta b \in C$. 
	Two quintuples are equivalent if they code the same set. This equivalence relation is definable: $(A,B,\alpha', \beta', C')$ is equivalent to $(A,B, \alpha, \beta, C)$ if for all $a\in A, b\in B$ we have $\alpha a + \beta b \in C$ if and only if $\alpha' a + \beta' b \in C'$. 
Thus, we have obtained an interpretation of $\Fin (K^2)$ in $(K, \Fin(K))$. 

It remains to prove the claim. Since $A\times B$ is finite, $K^2$ contains only finitely many vectors of the form $(a-a', b-b')$, where $(a,b)$ and $(a',b')$ are distinct elements of $A\times B$. 
Since $K$ is infinite, there is a pair $(\alpha, \beta) \neq (0,0)$ such that none of these vectors lies in the kernel of $p_{\alpha,\beta}$ (for instance take $\beta = 1$ and note that we only need to exclude finitely many values of $\alpha$). The claim follows. 
\end{proof}
\begin{rem}
Under the assumptions of Lemma \ref{lem:sets-of-pairs}, a similar proof shows that $\Fin(K^n)$ is internal to $(K,\Fin(K))$. This also follows from the lemma and Proposition \ref{prop:monadic}(2). 
\end{rem}

Thanks to Lemma \ref{lem:sets-of-pairs} and Proposition \ref{prop:monadic}, if $K$ is an infinite field, then  $(K,\Fin(K))$ interprets $(\N,+,\cdot)$. If $K$ has characteristic zero, this can be strengthened by giving a definition of $\Z$ as a subring of $K$. 

\begin{lem}\label{lem:Z}
Let $K$ be a field of characteristic zero. Then the subring of integers $\Z \subseteq K$ is definable in $(K, \Fin(K))$, and so is the set of non-negative integers $\N \subset \Z$. 
\end{lem}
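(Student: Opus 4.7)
The plan is to define $\mathbb{N} \subset K$ directly as the set of elements admitting a finite ``predecessor-closed'' witness set, and then obtain $\mathbb{Z}$ from $\mathbb{N}$ by negation. More precisely, I would try the formula
\[
\phi(x) \;:=\; \exists S \in \Fin(K)\,\bigl(\,0 \in S \,\wedge\, x \in S \,\wedge\, \forall y \in S\;(y \neq 0 \to y - 1 \in S)\,\bigr),
\]
which is visibly a formula of $(K,\Fin(K))$, and check that it defines the image of $\mathbb{N}$ under the canonical embedding $\mathbb{N} \hookrightarrow K$ (so, in particular, no appeal to the interpretation of $(\mathbb{N},+,\cdot)$ obtained via equipotence classes is needed).

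For the easy direction, if $x$ is the image of some $n \in \mathbb{N}$, then $S = \{0,1,\dots,n\} \subset K$ serves as a witness. For the converse, given a witness $S$ for $\phi(x)$, I would apply the closure condition iteratively to the descending chain $x, x-1, x-2,\ldots$ to argue that either it reaches $0$ after finitely many steps — in which case $x$ is the image of some natural number — or every element $x-k$ lies in $S$. The latter case is excluded because in characteristic zero the elements $x - k$ ($k \in \mathbb{N}$) are pairwise distinct (else $(i-j)\cdot 1 = 0$ in $K$), contradicting the finiteness of $S$.

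Finally, $\mathbb{Z}$ can then be defined as $\{x \in K : \phi(x) \vee \phi(-x)\}$. I do not expect any genuine obstacle here; the one subtlety to flag is that characteristic zero is essential, since otherwise cycles in the map $y \mapsto y-1$ — for instance, the set $S = \mathbb{F}_p \subset K$ when $\mathrm{char}(K) = p > 0$ — would make $\phi$ hold on all of $\mathbb{F}_p$ rather than just on the standard natural numbers.
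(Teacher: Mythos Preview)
Your proposal is correct and essentially the same as the paper's argument: the paper also defines $\N$ via a finite witness set $F\ni 0,x$ that is closed under $y\mapsto y-1$ away from $0$, and derives $\Z=\N\cup(-\N)$. The only cosmetic difference is that the paper additionally requires $F$ to be closed under $y\mapsto y+1$ away from $x$, but its proof---like yours---uses only the predecessor-closure clause together with finiteness and characteristic zero.
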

\begin{proof} Since $\Z = \N\cup -\N$, it suffices to define $\N$. 
Given $x\in K$, we observe that $x\in \N$ if and only if there is $F\in \Fin(K)$ such that $x \in F$ and, for all $z\in F$, we have $z\neq 0 \implies z-1 \in F$.
Indeed if $x \in \N$ we can take for $F$ the integers between $0$ and $x$ endpoints included. On the other hand if $x\nin \N$ then no such $F$ exists, because the condition forces $F$ to be closed under predecessors, which is impossible for a finite set since $K$ has characteristic zero.   
\end{proof}

Combining Lemma \ref{lem:sets-of-pairs} and Lemma \ref{lem:Z} we obtain: 

\begin{cor}
If $K$ is a field of characteristic zero, then for all $n,m \in \N$ we have that $\Fin(\Z^n \times K^m)$ is internal to $(K,\Fin(K))$.
\end{cor} 

\section{Definable finite power sets}\label{sec:deffps}
We isolate some first-order properties of $(A, \Fin(X))$ that are preserved under elementary equivalence.

\begin{defn}\label{defn:hyperfinite}
Let $\ca A$ be a first-order structure, let $X,Y$ be definable sets in $\ca A$ and let $\ins$ be a definable relation between $X$ and $Y$.  We say that $Y$ is a definable finite power set of $X$ with respect to $\ins$ if, putting $\F(X) = Y$,  the following holds:  
\begin{enumerate}
		\item (\emph{extensionality}) for every every $B,C \in \F (X)$, $B=C$ if and only if for all $x\in X$ we have $x\ins B \iff x \ins C$;
		\item (\emph{empty set}) there is $\emptyset^* \in \F (X)$ such that $x \not \ins \emptyset^*$ for each $x \in X$;
		\item (\emph{singletons}) for every $a \in X$ there is $\{a\}^* \in \F (X)$ such that for all $x\in X$ we have $x \ins \{a\}^* \iff x = a$;
		\item (\emph{binary unions}) for every $B,C \in \F (X)$ there is $B \cup^* C \in \F (X)$ such that for all $x\in X$ we have $x \ins B\cup^* C \iff x \ins B \lor x \ins C$;		
		\item (\emph{set induction scheme}) for every definable subset $\ca U \subseteq \F (X)$ if 
		$$\emptyset^* \in \ca U \quad \& \quad \forall x \in X. \forall B\in \F(X). \; (B \in \ca U \implies B \cup^* \{x\}^*\in \ca U),$$ then $\ca U = \F (X)$.
	\end{enumerate}
By extensionality the elements of $\mathcal{F}(X)$ whose existence is given by axioms (2)--(4) are uniquely determined, so the notations $\emptyset^*$ and $B \cup^* \{x\}^*$ used in the axiom scheme (5) are well-defined.
    
On the basis of the other properties, point (4) follows by induction from the special case below: 
\begin{enumerate}
\item[(4')] (\emph{union with singletons}) For every $B\in \F (X)$ and $c\in X$, there is $B \cup^* \{c\}^* \in \F (X)$ such that for all $x\in X$ we have $x \ins B\cup^* \{c\}^* \iff x \ins B \lor x =c $.
\end{enumerate}
Let us also note that (3) follows from (4') and (2), so an alternative axiomatization is given by (1),(2),(4'), (5). 
\end{defn}

\begin{prop}
If $X,Y$ and the relation $\ins$ are definable without parameters, then the fact that $Y$ is a definable finite power set of $X$ with respect to $\ins$ can be expressed by a (infinite) set of formulas without parameters. 
\end{prop}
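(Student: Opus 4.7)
The plan is to write down, clause by clause, a collection of first-order $L$-sentences (some grouped into schemes) which together exactly capture Definition \ref{defn:hyperfinite}, and then to verify that none of them involves parameters. Since $X$, $Y = \F(X)$ and $\ins$ are by hypothesis parameter-free definable, the relativised quantifiers $\forall x \in X$, $\exists B \in \F(X)$, and the atomic predicate $x \ins B$ may all be used freely inside parameter-free formulas.

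Clauses (1)--(4) are essentially first-order already. Extensionality is a single universal sentence. Each of the existence clauses (2), (3), (4) (or equivalently (2) and (4')) has the form $\forall (\text{inputs}) \, \exists u \in \F(X). \; \forall x \in X. \; (x \ins u \iff \psi)$, where $\psi$ is a simple boolean combination of equalities and $\ins$-atoms. Each such clause therefore yields a single parameter-free sentence.

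The main (and only) subtle step is clause (5), which quantifies informally over all parameter-definable subsets of $\F(X)$ and is not literally first-order. My plan is to replace this second-order quantifier by a first-order axiom scheme indexed by $L$-formulas. For each $L$-formula $\varphi(y, \bar z)$ with $y$ of sort $Y$ and extra free variables $\bar z$ of arbitrary sorts, the scheme includes the universal closure over $\bar z$ of the induction instance for the predicate $y \mapsto \varphi(y, \bar z)$: namely, if $\varphi(e, \bar z)$ holds for every $e \in \F(X)$ satisfying the empty-set condition of (2), and $\varphi(B', \bar z)$ holds whenever $B'$ witnesses the singleton-union condition of (4') for some $c \in X$ and some $B$ with $\varphi(B, \bar z)$, then $\forall B \in \F(X). \; \varphi(B, \bar z)$. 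Unfolding the names $\emptyset^*$ and $B \cup^* \{c\}^*$ via their defining formulas introduces no parameters, since those formulas use only the parameter-free symbols $X$, $\F(X)$, $\ins$.

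What is left is to check that the scheme truly captures (5). This is immediate from the observation that the definable subsets of $\F(X)$ in any structure $\ca B$ are precisely the sets of the form $\{y \in \F(X)^{\ca B} : \ca B \models \varphi(y, \bar a)\}$ as $\varphi$ ranges over $L$-formulas and $\bar a$ over tuples from $\ca B$. Collecting the sentences for (1)--(4) together with the countable schema for (5) therefore yields the required parameter-free axiomatization. I do not foresee any genuine obstacle beyond keeping the unfoldings of $\emptyset^*$ and $\cup^*$ correct inside each instance of the scheme.
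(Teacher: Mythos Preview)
Your proposal is correct and takes essentially the same approach as the paper: the paper's proof observes that clauses (1)--(4) are unproblematic and that clause (5) is handled by replacing the quantification over definable $\ca U$ with a scheme over formulas, universally quantifying the parameters --- exactly what you do. Your write-up is simply more explicit about the unfolding of $\emptyset^*$ and $\cup^*$ inside each scheme instance.
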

\begin{proof}
The only point which requires a moment of thought is the set induction scheme because there we need to quantify over all parametrically definable subsets $\ca U$ of $Y$. We can handle this by considering one formula at a time and introducing a universal quantification over its parameters. 
\end{proof}

The assumption that $X,Y,\ins$ are definable without parameters is not essential since we can always reduce to that case by adding constants to the language.
\begin{cor}
An elementary extension of $\ca A = (A,\Fin(A))$ has the form $\ca A' = (A', \F(A'))$ where $\F(A')$ is a definable finite power set of $A'$ in $\ca A'$. 
\end{cor}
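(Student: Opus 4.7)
The plan is to reduce the corollary to a direct application of the preceding proposition. First I would verify that $\ca A = (A,\Fin(A))$ itself satisfies the conditions (1)--(5) of Definition \ref{defn:hyperfinite} with $X = A$, $Y = \Fin(A)$, and $\ins$ interpreted as the genuine membership relation. Extensionality, the existence of $\emptyset$, singletons and binary unions are immediate from the set-theoretic meaning of $\Fin(A)$, and the set induction scheme holds because every genuine finite set is built from $\emptyset$ by finitely many one-element extensions, so any definable subfamily $\ca U \subseteq \Fin(A)$ containing $\emptyset$ and closed under adding one element contains every finite subset of $A$.

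Next, by the preceding proposition, these axioms can be written as a (possibly infinite) set $\Sigma$ of $L$-sentences without parameters, where $L$ is the language of $(A, \Fin(A))$. Since $\ca A \models \Sigma$ and $\ca A'$ is an elementary extension of $\ca A$, we have $\ca A' \models \Sigma$ as well. Writing $A'$ for the interpretation of the first sort and $\F(A')$ for the interpretation of the second sort in $\ca A'$, the satisfaction of $\Sigma$ in $\ca A'$ is precisely the statement that $\F(A')$ is a definable finite power set of $A'$ in $\ca A'$, as required.

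The only point requiring a moment of care is the same one flagged in the preceding proposition: the induction scheme quantifies over definable subsets $\ca U \subseteq \F(X)$, so we must read it as an infinite schema, one instance per formula (with the free parameter variables universally quantified). This is exactly how the preceding proposition encoded it, so preservation under elementary equivalence applies instance by instance and no additional argument is needed.

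There is essentially no obstacle beyond bookkeeping; the corollary is a formal consequence of the proposition together with the trivial observation that the standard $\Fin(A)$ is a model of the axioms. The only temptation to resist is trying to verify the induction scheme semantically in $\ca A'$ from scratch, since the elements of $\F(A')$ need not code genuinely finite subsets of $A'$; the whole point of routing through the preceding proposition is to avoid this.
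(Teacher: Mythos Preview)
Your proposal is correct and is exactly the argument the paper intends: the corollary is stated without proof in the paper, as an immediate consequence of the preceding proposition, and your write-up simply spells out that consequence (verify $(A,\Fin(A))$ satisfies the axioms, invoke the proposition to express them parameter-free, and transfer them to the elementary extension). There is nothing to add or correct.
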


One of the goals of this paper is to find axioms for the complete theory of the structure $(\C, \Fin(\C))$. A naive attempt is to consider a set of axioms that express that the first sort is an algebraically closed field of characteristic zero, and the second sort is a definable finite power set of the first sort. However this is not sufficient. If it were, the theory would be decidable, but this is not the case since $(\C, \Fin(\C))$ defines $\Z$ (Lemma \ref{lem:Z}). Even adding the complete theory of $\Z$ would not suffice. 
To formulate the correct axioms, we need to develop more theory. 

\begin{defn}\label{defn:PF}
By the extensionality axiom in Definition \ref{defn:hyperfinite} an element $a\in \F(X)$ is determined by its {\em extension} 
$$\ext a = \{x\in X \mid x \ins a\} \subseteq X.$$

We denote by $\pF(X)$ the collection of extensions of elements of $\F(X)$, so we have: 
$$a \in \F(X) \iff \ext a \in \pF (X).$$ 
If $a,b\in \F(X)$, we will write $a\subseteq^* b$ for $\ext a \subseteq \ext b$.
When there is no risk of confusion we will identify elements of $\F(X)$ with their extensions and we will write $\in, \emptyset, \cup, \{x\}, \subseteq$ instead of $\ins, \emptyset^*, \cup^*, \{x\}^*, \subseteq^*$.  
\end{defn}

When it exists, a definable finite power set is determined up to a definable isomorphism. More precisely, we have: 

\begin{prop} \label{cor:up to iso} Let $X$ be a definable set in a structure $\ca A$. 
If $\F(X)$ and $\F ' (X)$ are two definable finite power sets of $X$ in $\ca A$, then $\pF(X) = P_{\F'}(X)$. 

Moreover, the map which sends an element of $\F(X)$ to an element of $\F'(X)$ with the same extension is a definable bijection.
\end{prop}
\begin{proof}
    By the scheme of set induction on $\F (X)$, for every $x\in \F(X)$ there is a unique $y\in \F'(X)$ that has the same extension of $x$. This shows that $P_{\F}(X) \subseteq P_{\F'}(X)$. The other inclusion holds by symmetry. This argument also proves the second part.
\end{proof}
We will prove the existence of many definable finite power sets under various hypotheses. The first observation is the following.
\begin{prop}
\label{rem:robust} 
Let $X\subseteq Y$ be definable sets in a structure $\ca A$ and let $\F(Y)$ be a definable finite power set of $Y$. Then the set $\F(X) = \{b \in \F(Y) \mid \ext b \subseteq X\}$ is a definable finite power set of $X$ (with $\ins_X$ being the restriction of $\ins_Y$). In particular, \[\pF(X) = P_{\ca F}(Y) \cap \ca P(X)\] where $\ca P(X)$ is the collection of all subsets of $X$.
\end{prop}

\begin{proof}
    The first part is easy and the second part follows. 
\end{proof}

\begin{defn}
A definable set $X$ is {\em hyperfinite} if $X$ has a definable finite power set $\F(X)$ and $X \in \pF(X)$.
\end{defn}

\begin{prop}\label{prop:characetrize-hyperfinite}
If $Y$ has a definable finite power set $\F(Y)$, the elements of $\pF(Y)$ are exactly the hyperfinite subsets of $Y$.
\end{prop}

\begin{proof}
    Follows from Proposition \ref{rem:robust}.
\end{proof}

\begin{rem}
If $X$ has a definable finite power set $\F (X)$, then the family of hyperfinite subsets of $X$ coincides with $\pF(X)$ by Proposition \ref{prop:characetrize-hyperfinite}, so by Definition \ref{defn:hyperfinite} it contains the empty set, the singletons of elements of $X$, and it is closed under binary unions. In particular,
$$\Fin(X) \subseteq \pF(X).$$
\end{rem}

\begin{lem} \label{lem:subset} 
A definable subset of a hyperfinite set is hyperfinite. 
\end{lem}
\begin{proof}
Let $X$ be a definable set with a definable finite power set $\F(X)$. Let $Y$ be a hyperfinite subset of $X$ and let $D$ be a definable subset of $Y$. We can write $D = \{x\in Y \mid \varphi(x,c)\}$ where $\varphi(x,y)$ is a formula and $c$ is a tuple of parameters. We prove by set induction on $Y\in \pF(X)$ that the set $\{x\in Y \mid \varphi(x,c)\}$ belongs to $\pF(X)$. This is clear if $Y$ is empty or a singleton, and the property to be proved is preserved under binary unions, so it holds for all $Y\in \pF(X)$. 
\end{proof}

\begin{prop} \label{prop:image} Fix a structure $\ca A$ and consider definability in $\ca A^{eq}$ (except for the first point, where definability in $\ca A$ suffices).  
\begin{enumerate}
\item If $f:X\to Y$ is a definable injective function and $\F(Y)$ is a definable finite power set of $Y$, then $X$ has a definable finite power set $\F(X)$. 

\item If $f:X\to Y$ is a definable surjective function and $X$ has a definable finite power set $\F(X)$, then the set $Y$ has a definable finite power set $\F(Y)$. 
 
\item If there is a definable finite power set of $\F(X\times Y)$, then there are also definable finite power sets $\F(X), \F(Y)$ of $X,Y$ respectively. 
Moreover, for every $A\in \pF(X)$ and $B\in \pF(Y)$, we have $A\times B \in \pF(X\times Y)$.  
\end{enumerate}
\end{prop}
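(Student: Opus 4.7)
For part (1), I would exploit Remark \ref{rem:robust}: the image $f(X) \subseteq Y$ is definable, so $\{c \in \F(Y) : \ext{c} \subseteq f(X)\}$ is already a definable finite power set of $f(X)$. Transporting along the definable bijection $f : X \to f(X)$, I set $\F(X) := \{c \in \F(Y) : \ext{c} \subseteq f(X)\}$ and declare $x$ to belong to $c$ iff $f(x) \ins c$. Axioms (1)--(4) of Definition \ref{defn:hyperfinite} transfer immediately from $\F(Y)$. For the set induction scheme, given definable $\ca U \subseteq \F(X)$ containing $\emptyset^*$ and closed under adjoining singletons, I would apply set induction in $\F(Y)$ to the enlarged predicate $\ca V = \{c \in \F(Y) : \ext{c} \not\subseteq f(X) \text{ or } c \in \ca U\}$; the inductive step splits according to whether the newly added element lies in $f(X)$, using injectivity of $f$ in the former case.

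For part (2), the relation $a \sim a'$ on $\F(X)$ given by $f(\ext{a}) = f(\ext{a'})$ is definable, so I would form $\F(Y) := \F(X)/{\sim}$ in $\ca A^{eq}$, declaring $y$ to belong to $[a]$ iff there exists $x \ins a$ with $f(x) = y$. Extensionality is by construction, $[\emptyset^*]$ is the empty element, singletons arise from surjectivity existentially --- $\{y\}^* := [\{x\}^*]$ for any $x$ with $f(x) = y$, yielding a definable relation independent of the choice of $x$ --- and binary unions descend as $[a] \cup^* [b] := [a \cup^* b]$. The set induction scheme on $\F(Y)$ lifts to set induction on $\F(X)$ applied to the pullback $\{a \in \F(X) : [a] \in \ca U\}$ of any given definable $\ca U \subseteq \F(Y)$.

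For part (3), assume $X$ and $Y$ are both nonempty, else $X \times Y = \emptyset$ and the claim is trivial. The projections $\pi_X$ and $\pi_Y$ are definable surjections, so part (2) produces $\F(X)$ and $\F(Y)$. To show $A \times B \in \pF(X \times Y)$ whenever $A \in \pF(X)$ and $B \in \pF(Y)$, I would run two nested set inductions. Fixing $A$ and $y \in Y$ as parameters, a set induction on $A \in \F(X)$ shows that $A \times \{y\} \in \pF(X \times Y)$, with inductive step $(A \cup \{x\}) \times \{y\} = (A \times \{y\}) \cup \{(x,y)\}^*$. Then, fixing $A$, a set induction on $B \in \F(Y)$ handles the general case via $A \times (B \cup \{y\}) = (A \times B) \cup (A \times \{y\})$.

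The main care point throughout is ensuring that each auxiliary predicate is parametrically definable in $\ca A^{eq}$ so that the set induction scheme of Definition \ref{defn:hyperfinite}(5) applies; fixing $A$, $y$, and the parameters defining $f$ uniformly handles this. No individual step poses a real obstacle, since the axioms of a definable finite power set behave well under pullback along definable injections and pushforward along definable surjections.
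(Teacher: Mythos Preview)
Your proposal is correct and follows essentially the same approach as the paper: in (1) you transport $\F(Y)$ along $f$ and verify set induction via the auxiliary predicate $\ca V = \{c \in \F(Y) : \ext{c} \subseteq f(X) \implies c \in \ca U\}$, in (2) you form the quotient $\F(X)/{\sim}$ and pull back the induction predicate, and in (3) you use the projections with part (2) and a nested set induction via $A \times (B \cup \{y\}) = (A \times B) \cup (A \times \{y\})$ --- all exactly as the paper does. Your treatment is in fact slightly more detailed than the paper's in (3), where you spell out the inner induction establishing $A \times \{y\} \in \pF(X \times Y)$; note however that your ``trivial'' case when one factor is empty is not quite dispositive (if $X = \emptyset$ and $Y \neq \emptyset$ then $\F(X \times Y) = \F(\emptyset)$ carries no information about $Y$), though the paper's proof has the same tacit nonemptiness assumption.
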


\begin{proof} 
Point (1) is clear if $f$ is bijective. We can reduce to this case replacing $Y$ with the image of $f$ and using Proposition \ref{rem:robust}.

(2) Given a surjective definable function $f:X\to Y$ and a definable finite power set $\F(X)$ of $X$, we define an equivalence relation $\sim$ on $\F(X)$ by declaring that $A\sim B$ if the corresponding extensions $\ext A$ and $\ext B$ have the same image under $f$. In $\ca A^{eq}$ we can define $\F(Y)$ as the quotient of $\F(X)$ modulo $\sim$. We use the surjectivity assumption to show that $\F(Y)$ contains codes for the singleton sets of elements of $Y$.

(3) The projections of $X\times Y$ onto $X$ and $Y$ respectively are surjective, so the existence of $\F(X), \F(Y)$ follows from the existence of $\F(X\times Y)$. The proof of the implication $A\in \pF(X), B\in \pF(Y) \implies A\times B \in \pF(X\times Y)$ is by set induction using $A\times (B \cup \{y\}) = (A \times B) \cup (A \times \{y\})$ and the analogous equation with the roles of $A, B$ exchanged. 
\end{proof}

\begin{lem}\label{lem:projection}
     The image under a definable function of a hyperfinite set is hyperfinite.
\end{lem}

\begin{proof}
    Let $X$ be a hyperfinite set and let $f:X\to Y$ be a definable surjective function. We must show that $Y$ is hyperfinite. Since $X$ is hyperfinite, $X$ has a definable finite power set $\F(X)$, and therefore $Y$ also has a definable finite power set $\F(Y)$ by Proposition \ref{prop:image}(2). It is easy to see by the scheme of set induction on $\F(X)$ that, for every $A\in \pF(X)$, the image $\img (f_{|A})$ belongs to $\pF(Y)$. When $A = X$ we reach the desired conclusion.  
\end{proof}

\begin{lem}\label{lem:image}
Let $f:X\to Y$ be a definable surjective function. Suppose that $X$ and $Y$ have definable finite power sets $\F(X)$ and $\F(Y)$ respectively. Then for every $B \in \pF(Y)$ there is $A\in \pF(X)$ such that $B$ is the image of $A$ under $f$. \end{lem}
\begin{proof}
    By the scheme of set induction on $\F(Y)$. 
\end{proof}

In general the cartesian product of two hyperfinite sets $A,B$ need not be hyperfinite since $A\times B$ may not have a definable finite power set. However we have: 

\begin{lem}\label{lem:image-of-product}
    If $f:A\times B \to Y$ is a definable function, $A,B$ are hyperfinite, and $Y$ has a definable finite power set, then the image of $f$ is hyperfinite. 
\end{lem}
\begin{proof}
    By the scheme of set induction on $\F(B)$ we show that for every $C\in \pF(B)$ the image of $f$ restricted to $A\times C$ is hyperfinite. This is obvious if $C$ is empty. If $C = D \cup \{x\}$ with $x\nin D$, then $A \times \{x\}$ is hyperfinite by Lemma \ref{lem:projection} and by the induction hypothesis $f(A\times D)$ is hyperfinite. Thus $f(A\times C) = f(A \times D) \cup f(A \times \{x\})$ is hyperfinite. 
\end{proof}

We next show that the inclusion relation between hyperfinite sets is definably well-founded. 

\begin{thm} \label{thm:well-founded} Let $\ca A$ be a structure with a definable set $X$ which has a definable finite power set $\F (X)$. Then $(\F(X), \subseteq^*)$ is definably well-founded, namely every non-empty definable subset of $\F(X)$ contains an element whose extension is minimal with respect to the inclusion relation.
\end{thm}

\begin{proof}
Let $\ca U \subseteq \F (X)$ be a non-empty definable set. 
 We need to show that $\ca U$ has a minimal element. This would be easier if the structure $\ca A$ admitted a definable finite power set $\F (X^2)$ of $X^2$, for then we could define a notion of definable cardinality for elements of $\F(X)$ and take an element of minimal definable cardinality. 

Lacking $\F(X^2)$ we reason as follows. 
Given $Y\in \F(X)$, let $\ca U_Y = \{ B\in \F (X) \mid B \cup Y \in \ca U\}$. Now, given $B\in \F(X)$ let $B\! \downarrow = \{C \in \F(X) \mid C \subseteq B\}$. 

Let $\ca V \subset \F (X)$ be the set of all $B\in \F(X)$ satisfying the following property
\begin{equation}\label{eqn:step}
\forall Y\in \F(X). (B \! \downarrow  \cap  \; \ca U_Y\neq \emptyset \implies B \! \downarrow  \cap  \; \ca U_Y \text{ has a minimal element}).
\end{equation}
Clearly $\emptyset \in \ca V$ (because if $\emptyset \!\downarrow \cap \; \ca U_Y$ is non-empty, then it has $\emptyset$ as its sole member, and thus has $\emptyset$ as the minimal element). 

We claim that for all $B\in \F(X)$ and $x\in X$ we have
$$B\in \ca V \implies B\cup \{x\} \in \ca V$$ 
Granted the claim, we conclude as follows. Fix $B \in \ca U$. By the claim and the scheme of set induction we have $\ca V = \F(X)$, so $B\in \ca V$. Taking $Y = \emptyset$ in equation (\ref{eqn:step}) we obtain
$$B\!\downarrow \cap \; \ca U \neq \emptyset \implies B\!\downarrow \cap \; \ca U  \text{ has a minimal element }.$$
The premise of the implication holds since $B\in \ca U$. A minimal element of $B \downarrow \cap \;  \mathcal{U}$ is clearly a minimal element of $\ca U$. 

 It remains to prove the claim. We argue by set induction. Suppose $B\in \ca V$. We need to show that $B\cup \{x\}\in \ca V$. So fix $Y\in \F(X)$ such that $(B\cup \{x\})\! \downarrow \cap \; \ca U_Y  \neq \emptyset$. We need to show that $(B\cup \{x\})\! \downarrow \cap \; \ca U_Y $ has a minimal element. If $B\! \downarrow \cap \; \ca U_Y \neq \emptyset$ this follows from the hypothesis $B\in \ca V$, so we can assume that $B\! \downarrow \cap \; \ca U_Y = \emptyset$. Together with the assumption $(B\cup \{x\})\! \downarrow \cap \; \ca U_Y  \neq \emptyset$ this implies that there is a proper subset $D$ of $B$ such that $D\cup \{x\} \in \ca U_Y$. Then $D \cup \{x\} \cup Y \in \ca U$, so $D \in \ca U_{Y\cup \{x\}}$, hence $ B \! \downarrow \cap \; \ca U_{Y \cup \{x\}} \neq \emptyset$. Since $B\in \ca V$, this implies that $B \! \downarrow\cap \; \ca U_{Y \cup \{x\}}$ has a minimal element $E \subseteq B$. Then $E\cup \{x\} \in  (B\cup \{x\})\! \downarrow\cap \; \ca U_Y$ is a minimal element of $\ca U_Y$. 
 \end{proof}

The following result shows that the existence of $\F(X^2)$ is a very powerful condition. 

\begin{thm} \label{thm:F2}
Let $X$ be a definable set in a structure $\ca A$. Suppose that $X$ is not hyperfinite and that $X^2$ has a definable finite power set $\F(X^2)$. Then for all $n\in \N$ we have: 
\begin{enumerate}
\item $X^n$ has a definable finite power set $\F(X^n)$ in $\ca A^{eq}$. 
\item $\F(X^n)$ has a definable finite power set $\F(\F(X^n))$ in $\ca A^{eq}$. 
\end{enumerate}
\end{thm}
\begin{proof}
For simplicity take $n=3$. Given $Y = (Y_1, Y_2, Y_3) \in \F(X^2)\times \F(X^2) \times \F(X^2)$ we say that $Y$ codes $B \subseteq X^3$ if for all $(a_1,a_2,a_3) \in X^3$ there is at most one $t\in X$ such that $(a_1,t)\in Y_1, (a_2,t)\in Y_2, (a_3,t)\in Y_3$ and $B$ is the set of all triples $(a_1,a_2,a_3)$ for which there is such a $t$. We define an equivalence relation $R$ by declaring two elements $Y\in \F(X^2)^3$ as above $R$-equivalent if they code the same subset of $X^3$. We extend the domain of $R$ to the whole $\F(X^2)^3$ by stipulating that if $Y$ does not satisfy the above condition then $Y$ is $R$-equivalent to $(\emptyset, \emptyset, \emptyset)$, so $Y$ codes the empty set $\emptyset \subseteq X^3$. 

In $\A^{eq}$ we can now define $\F(X^3)$ as the quotient $\F(X^2)^3/R$.
Now let $P_\F(X^3)$ be the family of all sets $B\subseteq X^3$ that are coded by some $Y \in \F(X^2)^3$. To prove that $\F(X^3)$ is a definable finite power set of $X^3$ we need to prove: 
\begin{itemize}
    \item[(i)] $\emptyset \in P_{\F}(X^3)$;
    \item[(ii)] if $B \in P_{\F}(X^3)$ and $(a_1,a_2,a_3)\in X^3$ then $B \cup \{(a_1,a_2,a_3) \}\in P_\F(X^3)$;
    \item[(iii)] $\F(X^3)$ satisfies the scheme of set induction. 
\end{itemize} 
Point (i) is clear. 
To prove (ii), fix some $Y=(Y_1,Y_2,Y_3) \in \F(X^2)^3$ which codes $B$ and let $X_i \subseteq X$ be the set of all $x\in X$ such that there is $t\in X$ such that $(x,t)\in \ext {Y_i}$ ($i=1,2,3$). Then $X_i$ is hyperfinite by Lemma \ref{lem:projection}, being the projection on the first coordinate of the hyperfinite set $\ext{Y_i}$. It follows that $X_1 \cup X_2 \cup X_3 \subseteq X$ is also hyperfinite. On the other hand $X$ itself is not hyperfinite, so there is $t\in X$ such that $t\nin X_1 \cup X_2 \cup X_3$. Let $Z_i \in \F(X^2)$ be a code for the set $\ext {Y_i}\cup \{(a_i, t)\}$. Then $(Z_1, Z_2, Z_3)$ codes $B \cup \{(a_1,a_2,a_3)\}$. 

Before proving (iii), we need: 
\begin{itemize}
    \item[(*)] if $B \in P_{\F}(X^3)$ and $(a_1,a_2,a_3)\in B$ then $B \setminus \{(a_1,a_2,a_3) \}\in P_\F(X^3)$. 
\end{itemize}
To prove (*), let $t\in X$ be the unique element of $X$ such that  $(a_i,t)\in \ext{Y_i}$ for $i=1,2,3$. By Lemma \ref{lem:subset}, there is a code $Z_i\in \F(X^2)$ for the set $\ext {Y_i}\setminus \{(a_i, t)\}$. Then $(Z_1, Z_2, Z_3)$ codes $B \setminus \{(a_1,a_2,a_3)\}$. 

It remains to prove (iii). So let $\ca U\subseteq \F(X^3)$ be definable and suppose that the family of sets $B\subseteq X^3$ that are extensions of elements of $\ca U$ contains the empty set and is closed under unions with singletons. We need to prove that $\ca U = \F(X^3)$. If this is not the case, there is $Y = (Y_1, Y_2, Y_3) \in \F(X^2)^3$ such that $[Y]_R\nin \ca U$, where $[Y]_R$ is the class of $Y$ modulo $R$. Since $(\F(X^2), \subseteq^*)$ is definably well founded (Theorem \ref{thm:well-founded}), we may choose $Y$ so that $[Y]_R\nin \ca U$ and the first component $Y_1 \in \F(X^2)$ is minimal with respect to $\subseteq^*$. Let $B\subseteq X^3$ be the set coded by $Y$, namely the extension of $[Y]_R$. Since $\emptyset^* \in \ca U$, the set $B$ is non-empty. Fix a point $(a_1,a_2,a_3)\in B$. By the proof of (*), $B\setminus \{(a_1,a_2,a_3)\}$ has a code $Z = (Z_1, Z_2, Z_3)$ with $Z_i \subset^* Y_i$ for $i=1,2,3$. In particular $Z_1 \subset^* Y_1$. By the minimality assumption on $Y$, we must have $[Z]_R \in \ca U$. This implies that $[Y]_R  = [Z]_R \cup^* \{(a_1,a_2,a_2)\}^* \in \ca U$, a contradiction. 
 
\medskip
To prove (2) we define 
$$\F(\F(X^n)) = \F(X^{n+1})/R$$
where $R$ is a definable equivalence relation to be defined below. 
Given $b\in \F(X^{n+1})$ and $y\in X$, define $b_y$ as the unique element of $\F(X^n)$ such that, for all $x\in X^n$, $x\in \ext {b_y} \iff (x,y) \in \ext b$.  For $b\in \F(X^{n+1})$ we define $A_b = \{b_y \mid y\in X\} \subset \F(X^n)$. 
We can now define $R$ by putting $bRc\iff A_b = A_c$. We observe that $R$ is a definable equivalence relation on $\F(X^{n+1})$. We define $\F(\F(X^n)) = \F(X^{n+1})/R$ with the membership relation given by $x\ins [b]_R \iff x \in A_b$.  

To prove that $\F(\F(X^n))$ is a definable finite power set of $\F(X^n)$ we must verify the clauses in Definition in \ref{defn:hyperfinite}. 

Extensionality and empty set are easy. Unions with singletons can be dealt with as follows. Let $B\in \F(\F(X^n))$ and $c \in \F(X^n)$. 
We can write $B = [b]_R$ for some $b \in \F(X^{n+1})$. Since $\ext b$ is hyperfinite, its projection to the last coordinate is hyperfinite by Lemma \ref{lem:projection}. Since $X \notin \pF(X)$, there is $t \in X$ which does not lie in the projection to the last coordinate of $\ext b$. The set $\ext c \times \{t\} \subseteq X^{n+1}$ is hyperfinite by Proposition \ref{prop:image}(3), so $\ext b \cup (\ext c \times \{t\}) \subseteq X^{n+1}$ is hyperfinite, hence it has a code $q \in \mathcal{F}(X^{n+1})$ and we can define $B \cup^* \{c\}^* \in \F(\F(X^n))$ as $[q]_R$. 

It remains to verify the axiom scheme of set induction. So let $\ca U\subseteq \F(\F(X^n))$ be a definable set and suppose that $\ca U$ contains the empty set of $\F(\F(X^n))$ and is closed under taking unions with singletons in the sense of $\F(\F(X^n))$. We must prove that $\ca U = \F(\F(X^n))$, namely $[b]_R \in \ca U$ for all $b\in \F(X^{n+1})$. If this is not the case, there is a minimal $b\in \F(X^{n+1})$ with respect to $\subseteq^*$ such that $[b]_R\nin \ca U$ (Theorem \ref{thm:well-founded}). Then $[b]_R\neq \emptyset^*$, so there is $c\in \F(X^n)$ such that $c\ins [b]_R$. Note that $c\in A_b$. Let $C \subseteq \ext b$ be the subset $\{(\overline{y},z) \in \ext b  \mid b_z=c\}$. Then $\ext b \setminus C \subseteq X^{n+1}$ is a definable subset of the hyperfinite set $\ext b$, so by Lemma \ref{lem:subset} it is the extension of some $h\in \F(X^{n+1})$ with $h \subset^* b$. By the choice of $C$, we have that $[b]_R=[h]_R \cup^* \{c\}^*$. By the minimality assumption on $b$, we have $[h]_R \in \ca U$. Since $\ca U$ is closed under unions with singletons, we have $[b]_R = [h]_R \cup^* \{c\}^* \in \ca U$, contradicting the assumption on $b$. 


\end{proof}

\section{Recursion on sets}\label{sec:rec-on-sets}
In this section we consider structures which admit all possible definable finite power sets. One example is $(\C, \Fin(\C))$ (Theorem \ref{thm:field power}).   
We will show that in these structures we can do recursive definitions on the formation of hyperfinite sets (Theorem \ref{thm:recursion}). 

\begin{defn} A structure $\ca A$ \emph{admits all definable finite power sets} if every definable set $X$ in $\ca A$ has a definable finite power set $\F(X)$. 
\end{defn} 

\begin{rem}
    Let $\ca A$ be a structure. If $Y$ is a definable finite power set of $X$ in $\ca A$, then $Y$ is a definable finite power set of $X$ in $\ca A^{eq}$. If $\ca A$ admits all definable finite power sets, then so does $\ca A^{eq}$. 
\end{rem}

\begin{defn} 
We say that $(K,\F(K))$ is a {\em hyper-infinite} field if $K$ is a field, $\F(K)$ is a definable finite power set of $K$, and $K\nin \pF(K)$.
\end{defn}

\begin{lem} \label{lem:nonstandard-sets-of-pairs} Let $\ca K=(K, \F(K))$ be a hyper-infinite field. Then $K^2$ has a definable finite power set in $\ca K^{eq}$. 
\end{lem}

\begin{proof}
    The idea is to adapt the proof of Lemma \ref{lem:sets-of-pairs} to the non-standard setting. Let $A,B,C \in \F(K)$, $\alpha,\beta \in K$. We say that $(A,B,\alpha,\beta,C)$ codes a set $X \subseteq K^2$ if the following hold:
    \begin{itemize}
        \item[(i)] $\ext A$ and $\ext B$ are the projections of $X$ to the first and second coordinate respectively.
        \item[(ii)] The function $\rho_{\alpha,\beta}: \ext A \times \ext B \to K$ defined by $(x,y) \mapsto \alpha x + \beta y$ is injective.
        \item[(iii)] $\ext C= \{\rho_{\alpha,\beta}(x) \mid x \in X\}$.
    \end{itemize}
    We define an equivalence relation $R$ on the set $\ca T$ of all quintuples satisfying these conditions by saying that two quintuples are equivalent if they code the same set. We then define $\F(K^2)$ as $\ca T/R$, with the membership relation given by $(x,y) \ins [(A,B,\alpha,\beta,C)]_R \iff \rho_{\alpha,\beta}(x,y) \in \ext C$.

Clearly the empty set has a code. We must prove that if $X\subseteq K^2$ has a code $(A,B,\alpha,\beta,C)$ and $(x,y)\in K^2$, then $X \cup \{(x,y)\}$ has a code. To this aim let $A' = A \cup^* \{x\}^*$ and $B' = B\cup^* \{y\}^*$. By Lemma \ref{lem:image-of-product} the set of all differences $a-a'\in K$ with $(a,a')\in \ext {A'}^2$ is hyperfinite, and so is the set of all differences $b-b'\in K$ with $(b,b') \in \ext{B'}^2$. By another application of Lemma \ref{lem:image-of-product}, the set of elements of the form $\frac{b'-b}{a-a'}$ for $(a,b) \neq (a',b') \in \ext{A'}\times \ext{B'}$ is also hyperfinite. Since $K$ is hyper-infinite, there is $\alpha' \in K$ such that $(a-a')\alpha' \neq (b'-b)$ for all $(a,b) \neq (a',b') \in \ext{A'}\times \ext{B'}$. It follows that the function $\rho_{\alpha',1}:(a,b) \mapsto \alpha' a +b$ is injective on $\ext{A'} \times \ext{B'}$. The sets $\rho_{\alpha,\beta}(X)$ and $\rho_{\alpha',1}(X)$ are both in definable bijection with $X$, so they are in definable bijection with each other; since $\rho_{\alpha,\beta}(X)=\ext C$ is hyperfinite, so is $\rho_{\alpha',1}(X)$. Let $C' \in \F(K)$ be the element whose extension is $\rho_{\alpha',1}(X)$. Then $(A',B',\alpha',1, C' \cup^*\{\alpha' x + y \}^*)$ is a code for $X \cup \{(x,y)\}$.

To finish the proof we must verify the scheme of set induction for $\F(K^2)$. We first observe that if $X\subseteq K^2$ has a code $(A,B,\alpha,\beta,C)$ and $(x,y)\in X$, then $X \setminus \{(x,y)\}$ has a code of the form $(A',B',\alpha,\beta,C')$ with $C=C' \cup^* \{\alpha x +\beta y \}^*$ and $\alpha x  + \beta y \nin^* C'$. Let $\mathcal{U}$ be a definable subset of $\F(K^2)$ which contains the empty set of $\F(K^2)$ and is closed under unions with singletons. By definable well-foundedness of $(\F(K),\subseteq^*)$ (Theorem \ref{thm:well-founded}), if $\ca U \neq \F(K^2)$ then there is a quintuple $Q=(A,B,\alpha,\beta,C)$ with $[Q]_R \notin \ca U$ and $C$ minimal with respect to $\subseteq^*$. Removing an element $(x,y)$ from the set $X\subseteq K^2$ coded by $Q$, by the observation above we get that $X \setminus \{(x,y)\}$ is coded by a quintuple $Q'=(A',B',\alpha',\beta',C')$ with $\ext{C'} \subsetneq \ext{C}$. By minimality of $C$, we have $[Q']_R \in \ca U$, and since $\ca U$ is closed under unions with singletons we have $[Q]_R=[Q']_R \cup^*\{(x,y)\}^* \in \ca U$, a contradiction.
\end{proof}

\begin{thm}\label{thm:field power}
Let $\ca K = (K,\F(K))$ be a hyper-infinite field. Then $\ca K^{eq}$ admits all definable finite power sets. 
\end{thm}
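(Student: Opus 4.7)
The strategy is to reduce the claim to producing a definable finite power set $\F(K^2)$ in $\ca K^{eq}$; once this is in place, Lemma \ref{lem:F2} together with Proposition \ref{prop:image} and Remark \ref{rem:robust} will extend $\F$ to every definable set of $\ca K^{eq}$.

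For the construction of $\F(K^2)$ I mimic the interpretation in the proof of Lemma \ref{lem:sets-of-pairs}. A putative element of $\F(K^2)$ is encoded by a quintuple $(A,B,\alpha,\beta,C)$ with $A,B,C\in \F(K)$, $\alpha,\beta\in K$ and the linear map $p_{\alpha,\beta}(a,b)=\alpha a+\beta b$ injective on $A\times B$; the coded subset is $\{(a,b)\in A\times B : \alpha a+\beta b \in C\}$. Two quintuples are equivalent when they code the same subset of $K^2$, and $\F(K^2)$ is the resulting definable quotient in $\ca K^{eq}$. The construction rests on two preliminary lemmas provable from the $\F(K)$-axioms. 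The first is an iterated image lemma, proved by an outer induction on $n$ and an inner set induction on $\F(K)$: for any definable $f\colon K^n\to K$ and any $A_1,\dots,A_n\in \F(K)$, the set $f(A_1\times\cdots\times A_n)$ lies in $\F(K)$. The second is that for every $A,B\in \F(K)$ there exist $\alpha,\beta\in K$ with $p_{\alpha,\beta}$ injective on $A\times B$: taking $\beta=1$, the set of ``bad'' $\alpha$ is the image of $A\times B\times A\times B$ minus a diagonal under a definable rational map, hence hyperfinite by the first lemma, and since $K\nin \pF(K)$ by hypothesis, a good $\alpha$ exists.

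I then verify the axioms of Definition \ref{defn:hyperfinite} for this candidate $\F(K^2)$. Extensionality is built into the quotient; the empty set and singletons are immediate. For the union-with-singleton axiom, given $(A,B,\alpha,\beta,C)$ coding $X$ and a point $u=(u_1,u_2)\in K^2$, I enlarge the projections to $A\cup\{u_1\}$ and $B\cup\{u_2\}$, choose fresh $\alpha',\beta'$ by the second lemma, and let $C'$ be the definable image under $p_{\alpha',\beta'}$ of the subset coded by the old quintuple together with $u$, which lies in $\F(K)$ by the iterated image lemma. The set induction scheme is the technical heart: given a definable $\ca U\subseteq \F(K^2)$ containing $\emptyset$ and closed under singleton unions, I fix a representative quintuple $(A,B,\alpha,\beta,C)$ of an arbitrary element and do set induction on the last coordinate $C\in \F(K)$, using the injectivity of $p_{\alpha,\beta}$ on $A\times B$ to conclude that adjoining one element to $C$ adds at most one point to the coded subset and so keeps us inside $\ca U$.

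Once $\F(K^2)$ is in place, Lemma \ref{lem:F2} applied to $X=K$ (with $K\nin \pF(K)$) yields $\F(K^n)$ and $\F(\F(K^n))$ for every $n\in \N$. For an arbitrary definable set $Z$ in $\ca K^{eq}$, $Z$ is in definable bijection with a quotient of a definable subset of some product of sorts of $\ca K$, that is, of $K^n\times \F(K)^m$ for some $n,m$. When $m=0$, Remark \ref{rem:robust} gives $\F(X)$ for any definable $X\subseteq K^n$ and Proposition \ref{prop:image}(2) then gives $\F(Z)$. When $m\geq 1$, $\F(K^n\times \F(K)^m)$ is obtained by iterating the same quintuple-style coding, now using $\F(\F(K^n))$ to parametrise the $\F(K)$-coordinates, and the argument proceeds as before. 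The main obstacle throughout is the set induction scheme in the construction of $\F(K^2)$; once this is handled, the remaining steps are essentially bookkeeping built on Lemma \ref{lem:F2}, Proposition \ref{prop:image} and Remark \ref{rem:robust}.
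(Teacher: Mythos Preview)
Your proposal follows essentially the same architecture as the paper: construct $\F(K^2)$ in $\ca K^{eq}$ by formalizing the quintuple coding of Lemma \ref{lem:sets-of-pairs}, invoke Lemma \ref{lem:F2} to get $\F(K^n)$ and $\F(\F(K^n))$, and then handle arbitrary definable sets as quotients of subsets of $K^m\times \F(K)^n$. Your treatment of the $\F(K^2)$ construction is in fact more detailed than the paper's (which simply says ``we can formalize the proof of Lemma \ref{lem:sets-of-pairs}''); in particular your set-induction argument on the coordinate $C$ is exactly the right idea and is correct.

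The one place where your write-up is weaker than the paper is the mixed case $m\geq 1$. The sentence ``$\F(K^n\times \F(K)^m)$ is obtained by iterating the same quintuple-style coding, now using $\F(\F(K^n))$ to parametrise the $\F(K)$-coordinates'' does not obviously make sense: the quintuple coding of Lemma \ref{lem:sets-of-pairs} relies on the linear map $p_{\alpha,\beta}(x,y)=\alpha x+\beta y$, which uses the field structure of $K$, and there is no analogous structure on $\F(K)$. The paper avoids this by a different (and simpler) device: it observes that there is a definable injection $K^m\times \F(K)^n \hookrightarrow \F(K^{m+n})$ (sending $a\in K$ to $\{a\}\in \F(K)$ and $(a_1,\dots,a_n)\in \F(K)^n$ to $a_1\times\cdots\times a_n\in \F(K^n)$), hence a definable surjection $\F(K^{m+n})\to K^m\times\F(K)^n$; since $\F(\F(K^{m+n}))$ exists by Lemma \ref{lem:F2}, Proposition \ref{prop:image}(2) finishes. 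You should replace your vague ``iterating the quintuple coding'' step with this injection/surjection argument. (A small caveat: the product map $\F(K)^n\to\F(K^n)$ is not literally injective because of the empty set, but this is trivially repaired, e.g.\ by tagging with a marker element.)

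A minor point on your construction of $\F(K^2)$: when you build $C'$ for the union-with-singleton axiom, you appeal to the ``iterated image lemma'', but that lemma concerns images of full products $A_1\times\cdots\times A_n$, not of the definable subset $X\subseteq A\times B$. The fix is exactly your set-induction-on-$C$ idea: show directly, by set induction on $C$, that $\{\alpha' a+\beta' b : (a,b)\in A\times B,\ \alpha a+\beta b\in C\}\in\pF(K)$, using injectivity of $p_{\alpha,\beta}$ on $A\times B$ to see that each new element of $C$ adds at most one element to this set.
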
 
\begin{proof}
By Lemma \ref{lem:nonstandard-sets-of-pairs}, there is a definable finite power set of $K^2$. By Theorem \ref{thm:F2}(1), for all $n\in \N$ there is a definable finite power set $\F(K^n)$ of $K^n$. 

Every definable set $X$ in $\ca K^{eq}$ is a definable quotient of $K^m \times \F(K)^n$ for some $m,n\in \N$, so by Proposition \ref{prop:image}(2) it suffices to prove the existence of a definable finite power set of $K^m \times \F(K)^n$. There is an injective definable function $K^m \times \F(K)^n \to \F(K^{m+n})$ defined by \[(x_1,\dots,x_m,a_1,\dots,a_n) \mapsto \{x_1\}^* \times \dots \times \{x_m\}^* \times a_1 \times \dots \times a_n.\]
 By Theorem \ref{thm:F2}(2), $\F(K^{m+n})$ has a definable finite power set, hence by Proposition \ref{prop:image}(1) so does $K^m \times \F(K)^n$.
\end{proof}

\begin{lem} \label{lem:finiteness of powers}
If $X$ is hyperfinite and $\F(X)$ has a definable finite power set $\F(\F(X))$, then $\F(X)$ is hyperfinite.  
\end{lem}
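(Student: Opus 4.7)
The plan is to prove the statement by the set induction scheme (axiom (5) of Definition \ref{defn:hyperfinite}) applied to the variable $B \in \F(X)$, after using Remark \ref{rem:robust} and the corollary following Corollary \ref{cor:up to iso} to realize $\F(B)$ canonically as the definable subset $\{C \in \F(X) : C \subseteq^* B\}$ of $\F(X)$. With this identification, ``$\F(B)$ is hyperfinite'' becomes the assertion that $\F(B) \in \pF(\F(X))$; here $\F(\F(X))$ exists because we are working under the standing assumption that all definable finite power sets are available.

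First I would verify that the set
\[
\ca U = \{B \in \F(X) : \F(B) \in \pF(\F(X))\}
\]
is a definable subset of $\F(X)$. This is routine: $\F(B)$ is uniformly definable from $B$, and hyperfiniteness of a uniformly definable family is expressed by a single existential quantifier over $\F(\F(X))$. For the base case, $\F(\emptyset^*) = \{\emptyset^*\}$ is a singleton in $\F(X)$, hence hyperfinite by clauses (2)--(3) of Definition \ref{defn:hyperfinite} applied to $\F(\F(X))$.

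The inductive step is the heart of the argument. Assume $B \in \ca U$ and $x \in X$; I need $B \cup^* \{x\}^* \in \ca U$. The key combinatorial observation is the decomposition
\[
\F(B \cup^* \{x\}^*) \;=\; \F(B) \;\cup\; \{\, C \cup^* \{x\}^* : C \in \F(B)\,\},
\]
obtained by separating subsets of $B \cup \{x\}$ according to whether they contain $x$. The second piece is the image of $\F(B)$ under the definable map $f_x : \F(X) \to \F(X)$, $C \mapsto C \cup^* \{x\}^*$; since $\F(B)$ is hyperfinite by the inductive hypothesis, Lemma \ref{lem:subset}(2) tells us this image is also hyperfinite. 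Finally, the union of two hyperfinite subsets of $\F(X)$ is hyperfinite by clause (4) of Definition \ref{defn:hyperfinite} applied to $\F(\F(X))$. Hence $\F(B \cup^* \{x\}^*) \in \pF(\F(X))$, closing the induction and yielding $\ca U = \F(X)$.

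I do not anticipate a substantive obstacle: all the moving parts (existence of $\F(\F(X))$, uniform definability, the decomposition of $\F(B \cup \{x\})$, and closure of hyperfinite sets under definable images and binary unions) have already been set up in the preceding machinery. The mild care needed is in writing the decomposition cleanly when $x$ may already lie in $\ext{B}$, but this is harmless since $B \cup^* \{x\}^* = B$ in that case and there is nothing to prove.
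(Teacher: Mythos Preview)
Your proposal is correct and follows essentially the same approach as the paper: set induction on $B\in\F(X)$, the decomposition $\F(B\cup\{x\}) = \F(B)\cup\{C\cup\{x\}:C\in\F(B)\}$, hyperfiniteness of the second piece via Lemma~\ref{lem:subset}(2) applied to the definable map $C\mapsto C\cup^*\{x\}^*$, and closure under binary unions in $\F(\F(X))$. The paper phrases the inductive step with the assumption $x\notin C$ built in, whereas you dispose of the degenerate case $x\ins B$ separately at the end, but this is a cosmetic difference only.
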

\begin{proof} 
We prove by the scheme of set induction on $\F(X)$ that for all $B\in \pF(X)$, $\F(B) \in \pF(\F(X))$ where $\F(B) = \{a\in \F(X) \mid \ext a \subseteq B\}$ (this is a definable finite power set of $B$ by Proposition \ref{rem:robust}). This is easy if $B$ is empty or a singleton, for then $\F(B)$ has only one or two members. So suppose $B = C \cup \{x\}$ with $x\nin C$. By the induction hypothesis we can assume that  $\F(C) = \ext c$ for some $c \in \F(\F(X))$. A hyperfinite subset of $B$ is either contained in $C$ or is the union of $\{x\}$ with a hyperfinite subset of $C$, namely $\pF(B) = \pF (C) \cup Q$ where $Q = \{V\cup \{x\} \mid V \in \pF(C)\} \subseteq \pF(X)$. From this we deduce that $\F(B) = \F(C) \cup R$ where $R = \{x\in \F(X) \mid \ext x \in Q\} \subseteq \F(X)$. Since the union of two hyperfinite sets is hyperfinite, it suffices to show that $R$ is hyperfinite. To this aim note that there is a bijection $f_x: \F(C)\to R$ sending $u\in \F(C)$ to the unique $v \in R$ such that $\ext v = \ext u \cup \{x\}$. This bijection is the restriction to the hyperfinite subset $\F(C) \subseteq \F(X)$ of a definable function from $\F(X)$ to $\F(X)$, so by Lemma \ref{lem:subset}(2) its image $R$ is hyperfinite. 
\end{proof}

The next theorem shows that we can do recursive definitions (not only proofs by induction) on hyperfinite sets. It can be seen as a universal property for $\F(X)$ in the definable category. Essentially it says that, within the definable category, $\F(X)$ is a commutative monoid freely generated by the empty set and the singletons. We will use the theorem to define by recursion various operations on non-standard polynomials. 

\begin{thm}[Recursion on sets] \label{thm:recursion} Work in a structure admitting all definable finite power sets. 
Let $X,Y$ be definable sets and let $\F(X)$ be a definable finite power set of $X$. Given $y_0 \in Y$, a definable function $g: X \to Y$ and a commutative and associative definable function $h: Y \times Y \to Y$, there is a unique definable function 
$$G: \F(X) \to Y$$
 such that for all $A \in \F(X)$ and $x \in X$ we have: 

\begin{enumerate}
\item $G(\emptyset^*) = y_0$,
\item $G(A \cup^* \{x\}^*) = h(G(A), g(x))$,
\end{enumerate}
(where the starred operations are as in Definition \ref{defn:hyperfinite}).  Moreover, the definition of $G$ depends uniformly on the parameters of $y_0, g, h$. 
\end{thm}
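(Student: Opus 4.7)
My plan is to realise the graph of $G$ as a definable subset $\Gamma\subseteq\F(X)\times Y$ by certifying each pair with a hyperfinite \emph{computation trace}, and then to prove that $\Gamma$ is the graph of a function via a well-founded induction in which the commutativity and associativity of $h$ intervene.

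For uniqueness, if $G_1,G_2$ both satisfy (1)--(2) then $\ca U=\{A\in\F(X)\mid G_1(A)=G_2(A)\}$ contains $\emptyset^*$ and is closed under $A\mapsto A\cup^*\{x\}^*$ by (2), so the set induction scheme gives $\ca U=\F(X)$.

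For existence I use the standing assumption that every definable set admits a definable finite power set, so in particular $\F(\F(X)\times Y)$ exists. Declare $(A,y)\in\Gamma$ if there is a hyperfinite trace $T\in\F(\F(X)\times Y)$ with $(A,y)\in T$ and such that every $(B,z)\in T$ either equals $(\emptyset^*,y_0)$, or admits $x\in B$ and $z'\in Y$ with $(B\setminus^*\{x\}^*,z')\in T$ and $z=h(z',g(x))$; this is a definable condition. Set induction on $A$ yields some witness $y$: the trace $\{(\emptyset^*,y_0)\}$ handles $\emptyset^*$, and given a trace for $(A,y)$, adjoining the pair $(A\cup^*\{x\}^*, h(y,g(x)))$ produces a trace for the enlarged set, using that $\pF$ is closed under adding one element.

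The main obstacle is to show that the witness is unique for each $A$, since this is where the algebraic properties of $h$ are essentially used. The set of $A$ admitting two distinct witnesses is definable, so by the definable well-foundedness of $\subseteq^*$ (Theorem \ref{thm:well-founded}) a putative counterexample would admit a $\subseteq^*$-minimal element $A$. Traces over $\emptyset^*$ force the value to be $y_0$, so $A\neq\emptyset^*$, and the two offending traces end at last steps $y=h(y_1,g(x))$ with $(A\setminus^*\{x\}^*,y_1)\in\Gamma$ and $y'=h(y_1',g(x'))$ with $(A\setminus^*\{x'\}^*,y_1')\in\Gamma$; the values $y_1,y_1'$ are unique by minimality. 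If $x=x'$, we conclude immediately. Otherwise, let $z$ be the unique witness attached by minimality to the strictly smaller set $A\setminus^*\{x,x'\}^*$; extending a trace for $z$ by one step forces $y_1=h(z,g(x'))$ and $y_1'=h(z,g(x))$, whence commutativity and associativity give
\[
y=h(h(z,g(x')),g(x))=h(h(z,g(x)),g(x'))=y',
\]
a contradiction. Thus $\Gamma$ is the graph of a definable function $G$, equations (1) and (2) hold by construction of the traces, and uniformity in $(y_0,g,h)$ is built into the definition.
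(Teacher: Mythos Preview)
Your argument is correct and follows the same overall architecture as the paper's: define the graph of $G$ by quantifying over a hyperfinite witness, then separate existence and uniqueness, with the commutativity/associativity of $h$ resolving the ambiguity arising from two different ways of peeling off a last element. The implementation differs in a useful way, however. The paper's witness for $(B,y)$ is a \emph{full} table $S:\F(B)\to Y$ satisfying the recursion on every hyperfinite subset of $B$; to make the quantifier $\exists S$ first-order, the paper must invoke Lemma~\ref{lem:finiteness of powers} to show that $\F(B)$ is itself hyperfinite, so that $S$ lives in $\F(\F(X)\times Y)$. Your traces are more local: $T$ need only contain one descending chain of pairs, not all of $\F(B)$, so you bypass Lemma~\ref{lem:finiteness of powers} entirely and only need the existence of $\F(\F(X)\times Y)$, which is immediate from the standing hypothesis. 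A second, minor difference is that the paper places the commutativity/associativity step inside the \emph{existence} argument (building a single $S$ that extends every $S_x$), whereas you handle existence by a direct set induction and push the algebraic step into the \emph{uniqueness} argument; both orderings work, and the diamond computation $h(h(z,g(x')),g(x))=h(h(z,g(x)),g(x'))$ is identical in the two proofs.
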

\begin{proof} By Proposition \ref{rem:robust} we can assume that the definable finite power set of any $B \subseteq X$ is given by $\F(B) = \{a \in \F(X) \mid \ext a \subseteq B\}$. Therefore, if $A\subset B \subset X$ we have $\F(A) \subset \F(B) \subset \F(X)$. 
Let $\varphi(S,B,y)$ be the conjunction of:  
\begin{enumerate}
\item $B\in \F(X)$, 
\item $S$ is a definable function from $\F(B)$ to $Y$. 
\item $S(\emptyset^*) = y_0$,
\item $S(A \cup^* \{x\}^*) = h(S(A), g(x))$ \hspace{1em} $\forall A \in \F(B), \forall x \in B$,
\item $S(B) = y$. 
\end{enumerate}
Define $\gamma(B,y) :\iff \exists S. \; \varphi(S, B,y)$. We claim that $\gamma(B,y)$ is first-order definable and for all $B\in \F(X)$ there is a unique $y\in Y$ such that $\gamma(B,y)$. Granted this, we can define  $G(B) = y :\iff \gamma(B,y)$ and verify that $G$ satisfies the desired properties. 

To prove that $\gamma(B, y)$ is first-order, we observe that for any definable function $S:\F(B)\to Y$ there is a definable bijection between (the graph of) $S$ and its domain $\F(B)$. Since $B$ is hyperfinite, $\F(B)$ is hyperfinite (Lemma \ref{lem:finiteness of powers}), hence so is $S$ (Lemma \ref{lem:subset}). We have thus shown that the definable functions from $\F(B)$ to $Y$ are exactly the elements of $\pF(\F(B)\times Y)$ whose extension is a function from $\F(B)$ to $Y$. It follows that the quantifier $\exists S$ can be handled by a quantification over the elements of $\F(\F(B)\times Y)$ (we need to modify $\varphi$ so as to replace $S$ by the extension of some $s\in \F(\F(B)\times Y)$) and it is therefore first-order. 

We claim that, given $B\in \F(X)$, if there are $S,y$ such that $\varphi(S,B,y)$, then $S$ and $y$ are unique.  To this aim note that if $A\subset B$ and $\varphi(S,B,y)$ holds, then $S$ restricts to a definable function $S'$ from $\F(A)$ to $Y$ witnessing $\varphi(S',A,y')$ where $y' = S(A)$ (here we use $\F(A) \subset \F(B)$). In particular, if $B = A \cup^* \{x\}^*$, we must have $y = h(S(A), g(x)) = h(y', g(x))$ and $S(B)=y$, so the uniqueness of $S,y$ follows by set induction. 
 
It remains to prove that for all $B\in \F(X)$ there are $S,y$ such that $\varphi(S,B,y)$. For a contradiction suppose this fails. Using Theorem \ref{thm:well-founded} consider a minimal $B$ (with respect to $\subseteq^*$)  such that there are no $S,y$ satisfying $\varphi(S,B,y)$. If $B= \emptyset^*$ we define $S: {\F(\emptyset)} \to Y$ so that $S(\emptyset^*) = y_0$ and we reach a contradiction. So assume $B\neq \emptyset$. For each $x\ins B$, we can write $B = A_x \cup^* \{x\}^*$ where $x\nin^* A_x$. By the minimality of $B$ there is a unique $S_x$ and a unique $y_x$ such that $\varphi(S_x, A_x, y_x)$ holds. 
If $b,c\ins B$, we have that for every $x \ins A_{b,c} = A_b \cap^* A_c  = B\setminus^* \{b,c\}^*$ the formulas $\varphi((S_b)_{|A_{b,c}}, A_{b,c},S_b(A_{b,c}))$ and $\varphi((S_c)_{|A_{b,c}}, A_{b,c},S_c(A_{b,c}))$ both hold, so by uniqueness we have that $S_b$ and $S_c$ have a common restriction $S_{b,c}$ to $A_{b,c}$.

  Let $y_b = S_b(A_b)$, $y_c = S_c(A_c)$ and $y_{b,c} = S_{b,c}(A_{b,c})$. We must have $y_{b} = h(y_{b,c}, g(c))$ and $y_{c} = h(y_{b,c},g(b))$. Using the commutativity and associativity of $h$ it follows that $h(y_b, g(b)) = h(y_c, g(c))$. Let $y:=h(y_b,g(b))$. Since $b$ and $c$ were arbitrary, it follows that for each $x \ins B$ we have $h(y_x,g(x))=y$. Hence we can define $S:\F(B) \to Y$ as the unique common extensions of the functions $S_x$ such that $S(B)=y$. By choice of $y$, it follows that $\varphi(S, B, y)$ holds. 
\end{proof}

\section{Definable models of $\PA$}\label{sec:PA0}

In this section we work with models of $\PA$ which are definable in a given structure $\ca A$ and satisfy a strengthening of the induction scheme, which takes into account sets definable in the ambient structure. 

\begin{defn}\label{defn:PAA}
    Given a model $\ca N$ of $\PA$ definable in a structure $\ca A$, we say that $\ca N$ satisfies the induction scheme in $\ca A$, if every subset of $|\ca N|$ (the domain of $\ca N$) definable in $\ca A$ that contains zero and is closed under successor, is equal to $|\ca N|$. In this case we write $\ca N \models \PA(\ca A)$.
\end{defn}

Given a model $\ca N$ of $\PA$ and an element $n \in |\ca N|$, we will write $(n)$ for the set $\{x \in |\ca N| : x < n \}$.

\begin{lem}\label{lem:least number}
    Let $\ca A$ be a structure, and suppose $\ca N \models \PA(\ca A)$. Then every non-empty subset of $|\ca N|$ definable in $\ca A$ has a minimum. 
\end{lem}
\begin{proof}
Let $X$ be a definable subset of $|\ca N|$. 
It suffices to prove by induction on $n\in |\ca N|$ that if $X \cap (n)$ is non-empty, then it has a minimum. Both the base case and the induction step are clear. 
\end{proof}

\begin{lem}\label{lem:intervals-are-finite}
    Suppose $\ca N \models \PA(\ca A)$ and $|\ca N|$ has a definable finite power set $\F(|\ca N|)$ in $\ca A$. Then for every $n\in |\ca N|$, the interval $(n)$ is hyperfinite in the sense of $\ca A$. 
\end{lem}
\begin{proof}
    By induction on $n$, we have $(n)\in \pF(|\ca N|)$. 
\end{proof}

We prove via a G\"odel-type coding that every model $\ca N$ of $\PA$ admits all definable finite power sets. We then show that if $\ca N \models \PA(\ca A)$, then the notion of definable finite power set is absolute when moving between $\ca N$ and $\ca A$: if $X$ is a set definable in $\ca N$ and $Y$ is a definable finite power set of $X$ in $\ca N$, then it is also a definable finite power set of $X$ in $\ca A$.

\begin{lem}\label{lem:robust-pow-set-of-N}
If $\ca N \models \PA(\ca A)$, then $|\ca N|$ has a definable finite power set in $\ca N$, and if $Y$ is a definable finite power set of $|\ca N|$ in $\ca N$, then $Y$ is also a definable finite power set of $|\ca N|$ in $\ca A$.     
\end{lem}

\begin{proof}
It is well-known that the exponential function $x^y$ is definable in $\PA$. 
    For $x,y\in |\ca N|$, define $x\ins y$ if and only if there exist $a<2^x$ and $b\in |\ca N|$ such that $y = a+2^x + 2^{x+1}b$. In particular, if $\ca N$ is the standard model of $\PA$, then $x \ins y$ if and only if the $x$-th digit from the right in the binary expansion of $y$ is $1$. We claim that, in both $\ca N$ and $\ca A$, $|\ca N|$ has a definable finite power set $\F(|\ca N|)$ defined by $\F(|\ca N|)= |\ca N|$ with membership given by $\ins$. Extensionality is easy. The empty set, with respect to $\ins$, is defined by $\emptyset^* = 0$. The union with singletons is given by $x \cup^* \{y\}^* = x+2^y$ if $y\nin^* x$ and $x \cup^* \{y\}^* = x$ if $y\ins x$. 
    It remains to verify the scheme of induction. So let $\ca U \subseteq \F(|\ca N|)$ be definable in $\ca A$. Suppose that $\ca U$ contains the empty set of $\F(|\ca N|)$ and $\ca U$ is closed under unions with singletons in the sense of $\F(|\ca N|)$. We must prove that $\ca U = \F(|\ca N|)$. If this is not the case, let $x\in \F(|\ca N|) = |\ca N|$ be the least element, with respect to the order $<$ of $\ca N$, such that $x\nin \ca U$ (this exists by Lemma \ref{lem:least number}). Then $x\neq \emptyset^*$. Moreover $x$ is also minimal with respect to the inclusion $\subseteq^*$ of $\F(|\ca N|)$. We can then write $x = y\cup^* \{z\}^*$ for some $y<x$ and some $z$. By the $\subseteq^*$-minimality of $x$ we have $y\in \ca U$. By the closure properties of $\ca U$ it follows that $x\in \ca U$ and we have a contradiction. 
    
    We have thus proved the lemma in the special case when $Y = \F(|\ca N|)$. If $Y$ is another definable finite power set of $|\ca N|$ in $\ca N$, then by Proposition \ref{cor:up to iso} there is a definable bijection in $\ca N$ from $|\ca N|$ to $Y$ and we can reduce to the special case. 
\end{proof}

\begin{cor}\label{cor:pow-set-robust-in-A}
    If $\ca N \models \PA(\ca A)$, then $\ca N^{eq}$ admits all definable finite power sets, and they remain such also in $\ca A^{eq}$.     
\end{cor}
\begin{proof}
Since $\ca N$ is a model of $\PA$, for all $k\in \N$ there is a definable bijection in $\ca N$ from $|\ca N|^k$ to $|\ca N|$. By definable choice in $\ca N$, it follows that, for every definable set $X$ in $\ca N^{eq}$, there is a definable bijection in $\ca N^{eq}$ from $X$ to a definable subset of $|\ca N|$. By Lemma \ref{lem:robust-pow-set-of-N}, $|\ca N|$ has a definable finite power set $\ca F(|\ca N|)$ in both $\ca N$ and $\ca A$, unique up to a definable bijection in $\ca N$ (Proposition \ref{cor:up to iso}). Moreover, in both $\ca N$ and $\ca A$, a definable finite power set of $X\subseteq |\ca N|$, is given by $\{b\in \ca F(N) \mid \ext b \subseteq X\}$ by Proposition \ref{rem:robust}.
\end{proof}

\begin{lem} \label{lem:robust-hyperfinite}
Let $\ca N\models \PA(\ca A)$. Let $Y$ be a definable set in $\ca A^{eq}$ which is a subset of a definable set in $\ca N^{eq}$. If $Y$ is hyperfinite in the sense of $\ca A^{eq}$, then $Y$ is definable in $\ca N^{eq}$ and hyperfinite in the sense of $\ca N^{eq}$. 
\end{lem}
\begin{proof}
Let $X$ be a definable set in $\ca N^{eq}$ such that $Y \subseteq X$. 
Let $\F(X)$ be a definable finite power set of $X$ in $\ca N$, hence also in $\ca A$ by Corollary \ref{cor:pow-set-robust-in-A}.
    Since $Y\subseteq X$, a definable finite power set of $Y$ in $\ca A$ is given by $\F(Y) = \{b\in \F(X) \mid \ext b \subseteq Y\}$ by Proposition \ref{rem:robust}. Since $Y$ is hyperfinite, $Y\in \pF(Y)$, so it has the form $\ext b$ for some $b\in \F(X)$, hence it is definable in $\ca N^{eq}$ and hyperfinite in the sense of $\ca N^{eq}$.  
\end{proof}

We conclude this section by proving a result on existence of hyperfinite enumerations of hyperfinite sets which we will use later on.
 
\begin{prop}\label{prop:enumeration}
Let $\ca N \models \PA(\ca A)$. 
If $X$ is a hyperfinite set in $\ca A$ and $|\ca N| \times X$ has a definable finite power set in $\ca A$, then there exist a unique $n\in |\ca N|$ and a hyperfinite bijection $f:(n)\to X$.
\end{prop}
\begin{proof} 
By formalizing the usual proof of the pigeonhole principle, it is easy to see that there are no bijections definable in $\ca A$ between $(n)$ and $(m)$ for $n\neq m$ in $|\ca N|$, so if $n$ exists it is unique. 

Since $X$ is hyperfinite, $X$ has a definable finite power set $\F(X)$ in $\ca A$ and it is the extension of an element of $\F(X)$. By the scheme of set induction for $\F(X)$ we prove that for all $Z\in \pF(X)$, there exist $n\in |\ca N|$ and a hyperfinite relation $f\in \pF(|\ca N|\times X)$ which defines a bijection from $(n)$ to $Z$. The case $Z=\emptyset$ is obvious. For the induction step it suffices to observe that if $f:(n)\to Z$ is a hyperfinite bijection and $a\in X$ is a new element, then there is a hyperfinite bijection $g:(n+1)\to Z\cup \{a\}$ extending $f$.  
\end{proof}

\section{Defining a model of $\PA$ inside a field of definable characteristic zero}\label{sec:PA}

In this section we consider a structure of the form $\mathcal K = (K,\F(K))$ where $K$ is a field and $\F(K)$ is a definable finite power set of $K$. 

\begin{defn} \label{defn:K0} 
Let $K_0 \subseteq K$ be the set of all $x\in K$ such that there is $F\in \pF (K)$ such that $x\in F$ and, for all $z\in F$, if $z\neq 0$, then $z-1 \in F.$ 
\end{defn}

\begin{exa} Consider the standard case when $(K, \F(K))= (K, \Fin(K))$ (or more generally $\pF(K) = \Fin(K)$). 
\begin{itemize}
\item If $K$ has characteristic zero, then $K_0 = \N\subset K$ (Lemma \ref{lem:Z}). 
\item If $K$ has characteristic $p$, then $-1 \in K_0$ (because $K_0$ contains all finite subrings of $K$). 
\end{itemize}
\end{exa}

This motivates the following definition: 

\begin{defn} \label{defn:char zero} $\ca K = (K, \F(K))$ has definable characteristic zero if $-1\nin K_0$. 
\end{defn}

A field $\ca K$ of definable characteristic zero is definably infinite, so $\ca K^{eq}$ has all definable finite power sets (Theorem \ref{thm:field power}). 
In general ``definable characteristic zero'' is stronger than ``characteristic zero'', as illustrated in the following example. 

\begin{exa} \label{exa:Fp} Let $K_p$ be a field of characteristic $p$ and consider a non-principal ultraproduct $\ca K = (K, \F(K))$ of the family of structures $(K_p, \Fin(K_p))$ as $p$ varies in the primes. Then: 
\begin{itemize}
\item 
 $\ca K$ has characteristic zero, but it does not have definable characteristic zero. 
\item 
By choosing $K_p$ algebraically closed for each $p$, we can arrange so that $K \cong \C$, so we have an example of a structure of the form $(\C, \F(\C))$ which fails to have definable characteristic zero.
\end{itemize} 
\end{exa}
 
 The next result shows that there is a definable model of $\PA$ (Peano Arithmetic) inside a field of definable characteristic zero. 
 
\begin{thm}\label{thm:PA} Let $\ca K = (K, \F(K))$ be as above and let $K_0\subseteq K$ be as in Definition \ref{defn:K0}. We have: 
\begin{enumerate}
\item (zero and successor)	
$0\in K_0$ and if $y\in K_0$ then $y+1\in K_0$;
\item (induction scheme) for every definable set $\ca U\subseteq K_0$, if $$0 \in \ca U \; \land \; \forall x. (x \in  \ca U \implies x+1 \in \ca U),$$ then $\ca U = K_0$;
\item (sum and product) for all $x,y\in K_0$, $x+y\in K_0$ and $xy\in K_0$;
\item If  $\ca K$ has definable characteristic zero, then $K_0 \models \PA(\ca K)$ (see Definition \ref{defn:PAA}). 
\end{enumerate}	
\end{thm}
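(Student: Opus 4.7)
The plan is to verify the four clauses in order, drawing on set induction (Definition \ref{defn:hyperfinite}) and the well-foundedness theorem (Theorem \ref{thm:well-founded}) for $\F(K)$.

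For (1) the witnesses are constructed explicitly. The singleton $\{0\}$ witnesses $0 \in K_0$, with both closure clauses of Definition \ref{defn:K0} vacuous. If $F$ witnesses $y \in K_0$, then $F \cup \{y+1\}$ witnesses $y+1 \in K_0$: the predecessor condition at the new point $y+1$ is satisfied because $y \in F$, and the successor condition at the old endpoint $y$ now holds because $y+1$ has just been added.

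For (2), suppose $\ca U \subseteq K_0$ meets the hypotheses and, arguing by contradiction, take $x \in K_0 \setminus \ca U$ with witness $F$. The set $D := F \setminus \ca U$ is hyperfinite by Lemma \ref{lem:subset}, nonempty since $x \in D$, and misses $0$ because $0 \in \ca U$. Moreover $D$ is closed under $z \mapsto z - 1$: for $z \in D$ we have $z \neq 0$, so $z-1 \in F$ by the witness condition, and if $z-1$ were in $\ca U$ then by closure of $\ca U$ under successor $z = (z-1)+1 \in \ca U$, contradicting $z \in D$. Applying Theorem \ref{thm:well-founded} to
$$
\ca M = \{ H \in \F(K) \mid H \neq \emptyset,\ H \subseteq D,\ \forall z \in H \ (z-1 \in H)\},
$$
which is nonempty since $D \in \ca M$, produces a $\subseteq$-minimal $H_0 \in \ca M$. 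If some $m \in H_0$ had $m+1 \nin H_0$ then $H_0 \setminus \{m\}$ would still belong to $\ca M$ and be strictly smaller, contradicting minimality; hence $H_0$ is also closed under $z \mapsto z+1$. Combining these two closures with the witness conditions on $F$ and iterating the predecessor map inside $H_0$, one reaches $0$, giving $0 \in H_0$ and the desired contradiction.

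Clauses (3) and (4) follow by bootstrapping. For (3), fix $y \in K_0$: the set $\ca U_y := \{x \in K_0 \mid x+y \in K_0\}$ is definable with parameter $y$, contains $0$ since $0 + y = y$, and is closed under successor because $(x+1)+y = (x+y)+1 \in K_0$ by (1); hence (2) gives $\ca U_y = K_0$. The identical argument, with $\ca V_y := \{x \in K_0 \mid xy \in K_0\}$ and the successor step $(x+1)y = xy + y$ using the just-established additive closure, handles multiplication. For (4), I verify the Peano axioms on $(K_0, 0, S, +, \cdot)$ with $S(x) = x+1$: successor is injective because $K$ is a field, the recursion clauses for $+$ and $\cdot$ are inherited from $K$, and the induction scheme is (2). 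The only axiom that requires definable characteristic zero is $\forall x \in K_0 \ (S(x) \neq 0)$, which fails precisely when $-1 \in K_0$; Definition \ref{defn:char zero} excludes this.

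The hard part is the final tracing step in (2), which must rule out a hyperfinite $H_0$ closed under both $z \mapsto z \pm 1$ yet missing $0$. One has to use $H_0 \subseteq F$ and the specific witnessing conditions on $F$ essentially; a hyperfinite pigeonhole principle, provable by set induction on $\F(K)$, is a natural tool for formalising this closing move.
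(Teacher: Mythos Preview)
Your arguments for (1), (3), and (4) are correct and coincide with the paper's.

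For (2) you diverge from the paper. The paper applies Theorem~\ref{thm:well-founded} to the definable family of \emph{all} witnesses of counterexamples, picks a minimal such $F$ together with an $x \in K_0 \setminus \ca U$ that $F$ witnesses, and observes that $G := F \setminus \{x\}$ is a strictly smaller witness for $x-1$; minimality then forces $x-1 \in \ca U$, whence $x \in \ca U$. You instead fix one witness $F$ for one counterexample $x$, pass to $D = F \setminus \ca U$, and minimize over nonempty hyperfinite subsets of $D$ closed under predecessor, arriving at an $H_0$ closed under both $z \mapsto z\pm 1$.

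Your closing step is a genuine gap. The sentence ``iterating the predecessor map inside $H_0$, one reaches $0$'' is wrong as written: $H_0$ is by construction closed under $z \mapsto z-1$ and misses $0$, so iterating predecessor stays inside $H_0$ forever and never reaches $0$. What you must actually exclude is a nonempty hyperfinite $H_0 \subseteq F$ closed under $z \mapsto z \pm 1$ with $0 \notin H_0$, and the pigeonhole principle you allude to does not by itself do this---in positive characteristic such $H_0$ exist (take $K=\mathbb F_p(t)$, $x=t$, $F=\mathbb F_p\cup(t+\mathbb F_p)$, $\ca U=\mathbb F_p$, $H_0=t+\mathbb F_p$), so some use of definable characteristic zero is unavoidable here. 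The paper's route sidesteps building $H_0$ altogether: minimizing the witness itself gives the one-step descent $F \mapsto F \setminus \{x\}$ and the contradiction directly.
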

\begin{proof}
If $F$ and $x$ are as in Definition \ref{defn:K0} we say that $F$ witnesses $x\in K_0$. 

(1) It suffices to observe that $F = \{0\}$ witnesses $0\in K_0$ and, 
if $F$ witnesses $y\in K_0$, then $F\cup \{y+1\}$ witnesses $y+1\in K_0$. 

  (2) Let $\ca U \subseteq K_0$ be definable and assume $0\in \ca U$ and $\forall x. (x \in \ca U \implies x+1 \in \ca U)$. For a contradiction suppose there is some $x\in K_0$ with $x\nin \ca U$. By Theorem \ref{thm:well-founded} we can choose, among all possible $x$, one which has a minimal witness $F\in \pF(K)$ with respect to inclusion. Since $0\in \ca U$, we must have $x\neq 0$. Removing $x$ from $F$ we obtain a witness $G\in \pF(X)$ of $x-1 \in K_0$. By the minimality hypothesis we have $x-1\in \ca U$. This implies $x\in \ca U$ and we have a contradiction. 

(3) Consider the set $\ca U$ of all $x\in K_0$ such that for all $y\in K_0$ we have $x+y\in K_0$. Then $\ca U$ contains $0$ and is closed under successor, so by the induction scheme $\ca U = K_0$ and we have proved that $K_0$ is closed under sums. Granted this, a similar argument shows that $K_0$ is closed under products. 

(4) If $\ca K$ has definable characteristic zero, then $-1\nin K_0$, so $0$ has no predecessor in $K_0$. Together with (1)--(3) this implies that $K_0\models \PA(\ca K)$.
\end{proof}

\begin{defn} \label{defn:NZQ} Let $\ca K = (K, \F(K))$ be a field of definable characteristic zero and let $$\hn(\ca K) = K_0 \models \PA(\ca K)$$ be the model of $\PA$ introduced in Theorem \ref{thm:PA}(4). 

We define  $\hZ (\ca K) = \hn (\ca K) \cup -\hn (\ca K)$ and we let $\hQ (\ca K) \subseteq K$ be the field of all quotients $a/b$ with $a\in \hZ(\ca K)$ and $b\in \hZ (\ca K) \setminus \{0\}$. 
When $\ca K$ is clear from the context we write $\hn, \hZ, \hQ$. 
Since $\hn$ is model of $\PA$, there is a unique embedding of the non-negative integers $\N$ into $\hn$, so we can assume $\N\subseteq \hn$ and we say that $n\in \hn$ is \emph{standard} if $n\in \N$. 
\end{defn} 

Recall that $\N$ is definable in $(\Q,+,\cdot)$ by a result of Julia Robinson \cite[Theorem 3.1]{Robinson}. The same argument probably shows that $\hn(\ca K)$ is definable in the structure $(\hQ (\ca K),+,\cdot)$. For simplicity, however, we work in the structure $(\hn, \hQ, +, \cdot) = (\hQ (\ca K),\hn (\ca K),+,\cdot)$.

\begin{lem}\label{lem:robust-hyperfinite-in-Q}
    Let $X$ be a definable set in $\ca Q = (\hQ, \hn, +, \cdot)$. Then: 
    \begin{enumerate}
        \item  $\ca Q$ admits all definable finite power sets, and if $Y$ is a definable finite power set of $X$ in $\ca Q$ then $Y$ is a definable finite power set of $X$ in $\ca K$. 
   \item $X$ is hyperfinite in $\ca Q^{eq}$ if and only if $X$ is hyperfinite in $\ca K^{eq}$. 
    \end{enumerate}
\end{lem}

\begin{proof}
    Observe that in $\ca Q^{eq}$ there is a definable bijection from $X$ to a subset of $\hn$. Now reason as in Lemma \ref{lem:robust-pow-set-of-N} and Corollary \ref{cor:pow-set-robust-in-A} for point (1) and as in Lemma \ref{lem:robust-hyperfinite} for point (2). 
\end{proof}

We need to introduce some terminology for hyperfinite functions and sequences. 

\begin{defn} \label{defn:functions} Let $\ca K = (K, \F(K))$ be a field of definable characteristic zero and let $X,Y$ be definable in $\ca K^{eq}$. Let 
$$\fun X Y \;\subseteq \; \pfun X Y \; \subseteq \; \F(X\times Y)$$
be defined as follows: 
\begin{itemize}
\item $\pfun X Y$ is the set of all $f\in \F(X\times Y)$ such that $\ext f$ is a partial function from $X$ to $Y$, namely a function from a hyperfinite subset of $X$ to $Y$; 
\item $\fun X Y$ is the set of all $f\in \F(X\times Y)$ such that $\ext f$ is a total function from $X$ to $Y$. 
 \end{itemize}
 If $f\in \pfun X Y$ we sometimes identify $f$ with $\ext f$ and write $f(x) = y$ for $\ext f (x) = y$ and $\dom(f)$ for $\dom(\ext f)$. The same convention applies to the elements of $\fun X Y$. Note that if $\fun X Y$ is non-empty, then $X$ is hyperfinite (because for $f\in \fun X Y$,  $\dom(\ext f)$ is a projection of the hyperfinite set $\ext f$, so we can apply Lemma \ref{lem:subset}(2)).
 \end{defn}
 
 \begin{defn}\label{defn:sequences} Given a definable set $X$ and $n\in \hn$, we recall that $(n) = \{i\in \hn \mid i < n\}$ and we define 
 $$X^{(n)} = \fun {(n)} X, \quad \quad \Seq(X) = \bigcup_{n\in \hn} X^{(n)}$$
 So (the extension of) an element of $X^{(n)}$ is a hyperfinite sequence of length $n$ of elements of $X$ while $\Seq(X)$ is the (definable) set of all such sequences as $n$ varies in $\hn = \hn(\ca K)$. 
 \end{defn}
 
\begin{rem} Let $\ca K$ be a field of definable characteristic zero. 
\begin{itemize}
\item 
For $n$ standard there is a natural definable bijection between $X^n$ and $X^{(n)}$. 
\item For $m,n \in \hn$, there is a natural definable bijection between $X^{(m)} \times X^{(n)}$ and $X^{(m+n)}$ given by the concatenation of the two sequences. 
\end{itemize}
\end{rem}

Quantifying over hyperfinite sequences we can prove: 

\begin{prop}\label{prop:powers} Let $\ca K = (K,\F(K))^{eq}$ be a field of definable characteristic zero. There is a definable function $(n,x)\mapsto x^n$ from $\hn \times K$ to $K$ such that
\begin{itemize}
\item $x^0 = 1$. 
\item $x^{n+1} = x^n x$. 
\end{itemize}
In a similar way, given $h\in K^{(n)}$, one can define the iterated products $\prod_{m<n} h(m)\in K$. \end{prop}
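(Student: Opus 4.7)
The plan is to define both maps by quantifying over hyperfinite sequences and to establish existence and uniqueness by induction on $\hn$, which satisfies $\PA$ by Theorem \ref{thm:PA}. Since $\ca K^{eq}$ admits all definable finite power sets (Theorem \ref{thm:field power}), the definable sets $K^{(n)} \subseteq \F(\hn \times K)$ of Definition \ref{defn:sequences} are available and can be quantified over.

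I first define the iterated product. For $h \in K^{(n)}$, declare $y = \prod_{m<n} h(m)$ if and only if there exists $s \in K^{(n+1)}$ such that $s(0) = 1$, $s(m+1) = s(m) \cdot h(m)$ for every $m < n$, and $s(n) = y$. This is a first-order condition in $(h, n, y)$, being an existential quantification over the definable set $K^{(n+1)}$. Existence and uniqueness of $y$ are then proved by induction on $n \in \hn$: for $n = 0$ the singleton sequence $\{(0, 1)\} \in K^{(1)}$ works; for the inductive step, a witness $s \in K^{(n+1)}$ for $h|_{(n)}$ can be extended to $s' \in K^{(n+2)}$ by adjoining the pair $(n+1, s(n) \cdot h(n))$, which is a hyperfinite extension by closure under unions with singletons (Definition \ref{defn:hyperfinite}). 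Uniqueness of $y$ is immediate from the deterministic recursive clauses. This yields $\prod_{m<n} h(m)$ as a definable function of $(h, n)$.

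For exponentiation, fix $(n, x) \in \hn \times K$ and let $c_{n,x}$ denote the constant sequence in $K^{(n)}$ with all entries equal to $x$; its graph $\{(m, x) \mid m < n\}$ is the image of the hyperfinite set $(n)$ under the definable injection $m \mapsto (m, x)$, hence hyperfinite by Lemma \ref{lem:subset}(2), and $c_{n,x}$ depends definably on $(n, x)$. Set $x^n := \prod_{m<n} c_{n,x}(m)$. The equations $x^0 = 1$ and $x^{n+1} = x^n \cdot x$ follow from unwinding the defining recursion on the witnessing sequence (using $c_{n+1,x} = c_{n,x} \cup^* \{(n, x)\}^*$). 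The main obstacle is the inductive existence of $s$: this rests on the induction scheme for $\hn$ (Theorem \ref{thm:PA}) applied to the first-order property ``there exists $s \in K^{(n+1)}$ satisfying the recursion clauses'', together with the closure of hyperfinite sequences under one-element extension. The rest is routine bookkeeping.
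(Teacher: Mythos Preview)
Your proof is correct and follows essentially the same approach as the paper: define the value via an existential quantification over a hyperfinite witness sequence $s \in K^{(n+1)}$ satisfying the recursive clauses $s(0)=1$, $s(m+1)=s(m)\cdot h(m)$, $s(n)=y$. You supply more detail than the paper's one-line argument (verifying existence and uniqueness by induction on $\hn$, and deriving $x^n$ as the special case $h=c_{n,x}$ of the iterated product rather than treating exponentiation first), but the underlying idea is identical.
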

\begin{proof}
We define $y = x^n$ if and only if there exists $f \in K^{(n+1)}$ satisfying $f(0) = 1$, $f(n) = y$ and $f(i+1) = x f(i)$ for all $i<n$. Such a function $f$ exists for all $n \in \hn$ by an easy induction argument. The second part is similar. 
\end{proof}

More generally, by an easier version of the proof of Theorem \ref{thm:recursion}, one can prove a recursion theorem over $\hn$: 

\begin{prop}[Recursion on numbers] \label{prop:recN} Let $\ca K$ be a field of definable characteristic zero. 
Given a definable set $Y$ in $\ca K^{eq}$, an element $y_0\in Y$ and a definable function $h:\hn \times Y \to Y$, 
there is a unique definable function $f: \hn \to Y$ satisfying
\begin{itemize}
\item $f(0) = y_0$, 
\item $f(n+1) = h(n,f(n))$.  
\end{itemize}
\end{prop}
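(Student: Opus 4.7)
The plan is to mimic the witnessing-sequence approach of Proposition \ref{prop:powers}: define $f(n) = y$ to hold if and only if there exists $s \in Y^{(n+1)}$ such that $s(0) = y_0$, $s(n) = y$, and $s(i+1) = h(i, s(i))$ for every $i < n$. Since $Y^{(n+1)} = \fun{(n+1)}{Y}$ is a definable set in $\ca K^{eq}$ by Definition \ref{defn:functions}, the existential quantifier over $s$ is legitimate, so this relation is first-order definable uniformly in the parameters of $y_0$ and $h$. Note that I cannot simply invoke Theorem \ref{thm:recursion} here, because the step function $h$ depends on the index $n$ and has no commutativity or associativity structure.

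For uniqueness of the value at a given $n$, I would suppose $s, s' \in Y^{(n+1)}$ are two witnesses and apply the induction scheme of Theorem \ref{thm:PA}(2) to the definable set $\ca U = \{i \in \hn \mid i > n \lor s(i) = s'(i)\}$: the base case $0 \in \ca U$ holds because $s(0) = y_0 = s'(0)$, and for the step, if $i \in \ca U$ with $i+1 \leq n$, then $s(i+1) = h(i, s(i)) = h(i, s'(i)) = s'(i+1)$. Hence $\ca U = \hn$, so in particular $s(n) = s'(n)$, and the value $y$ is uniquely determined by $n$.

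For existence, I would apply Theorem \ref{thm:PA}(2) once more, this time to $\ca V = \{n \in \hn \mid \exists s \in Y^{(n+1)} \text{ witnessing the recurrence up to } n\}$. The base case $0 \in \ca V$ is witnessed by the singleton $\{(0, y_0)\}^* \in Y^{(1)}$. For the step, if $s \in Y^{(n+1)}$ witnesses $n \in \ca V$ and $y := s(n)$, then $s \cup^* \{(n+1, h(n, y))\}^*$ lies in $Y^{(n+2)}$ and witnesses $n+1 \in \ca V$; here the key point is that adjoining a new pair to a hyperfinite function yields a hyperfinite function, which is built into the axioms of Definition \ref{defn:hyperfinite}.

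The only real subtlety — and the main thing to be careful about — is making sure every quantifier in the construction ranges over a definable set, which is why the argument is cast in terms of the definable sets $Y^{(k)}$ rather than abstract sequences. Once this is in place, both recurrence equations for $f$ are immediate from the construction, and uniqueness of $f$ as a function follows from the pointwise uniqueness established above.
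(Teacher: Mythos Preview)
Your proposal is correct and is precisely the approach the paper intends: the paper states Proposition~\ref{prop:recN} without proof immediately after Proposition~\ref{prop:powers}, introducing it with ``More generally one can prove a recursion theorem over $\hn$,'' which signals that the same witnessing-sequence argument goes through. Your observation that Theorem~\ref{thm:recursion} does not directly apply (because $h$ depends on the index and need not be commutative or associative) is also correct, and your handling of the definability issues via the uniformly definable sets $Y^{(n+1)} \subseteq \Seq(Y)$ is exactly what is needed.
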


\section{Non-standard polynomials}\label{sec:non-standard-poly}
In this section we work in a field of definable characteristic zero $\ca K = (K, \F(K))$. We define  certain rings of non-standard polynomial and an evaluation function.

\begin{defn}\label{defn:poly} Let $\hn = \hn(\ca K)$. Given $n\in \hn$ and $I\in \hn^{(n)}$, we formally identify $I$ with the {\em monomial} ${\tt x}^I = \prod_{k<n} {{\tt x}_k}^{I(k)}.$ So, $I$ is the same as ${\tt x}^I$ but we use a different notation to remind the reader of the intended interpretation of ${\tt x}^I$ as a monomial. For example, if $I = (3,4,0,2) \in \hn^{(4)}$, then ${\tt x}^I$ is the monomial ${{\tt x}_0}^3 {{\tt x}_1}^4 {{\tt x}_3}^2$.
\end{defn}

Given a definable subring $R$ of $K$, we are now ready to define the ring $\polyn R n$ of non-standard polynomials in $n$ variables with coefficients from $R$, where both $n$ and the degrees of the polynomials can be non-standard. 
 
\begin{defn}\label{defn:polynomials}
Let $R\subseteq K$ be a subring of $K$ which is definable in $\ca K = (K, \F(K))$ (for instance $R = \hQ$). Given $n\in \hn$, we define 
$$\polyn R n := \pfun {\hn^{(n)}} {R^*} \; \subseteq \; \F(\hn^{(n)} \times R^*)$$ 
where $R^* = R\setminus \{0\}$.  A (non-standard) polynomial $p\in \polyn R n$ is thus a hyperfinite function which assigns to each monomial ${\tt x}^I = I \in \hn^{(n)}$ in its domain a non-zero coefficient $p(I)\in R^*$. The domain of $p$ is also called its {\em support} and the  coefficient of a monomial which is not in the support is defined to be zero. 
With this convention, we can write $p \in \polyn R n$ as a formal expression $\sum_{I\in \hn^{(n)}} p_I {\tt x}^I$ where $p_I$ is the coefficient of ${\tt x}^I$, namely 
$$p_I = \begin{cases} p(I) & \text{ if } I \in \dom (p), \\ 0 & \text{ if } I \nin \dom (p) \end{cases}$$
We introduce a ring structure on $\polyn R n$ as follows. The sum of two polynomials $p,q \in \polyn R n$ is defined by adding the corresponding coefficients. The product $pq$ is defined by recursion on $\dom(p)$ as in the following lemma. 
\end{defn}

\begin{lem}\label{lem:product}
 There is a definable function, which, given $n\in \hn$ and $p,q\in \polyn R n$, gives a {\em product} $pq \in \polyn R n$ in such a way that the natural properties hold: the product is bilinear and commutative and the product of two monomials is obtained by adding the exponents.  
\end{lem}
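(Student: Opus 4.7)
The plan is to define $pq$ by recursion on the support $\dom(p)$ using Theorem \ref{thm:recursion}, after putting two ingredients in place: polynomial addition and multiplication of a monomial by a polynomial. Polynomial addition is direct: given $p, q \in \polyn R n$, the sum $p + q$ is the partial function whose domain is the definable subset $\{I \in \dom(p) \cup \dom(q) : p_I + q_I \neq 0\}$ of the hyperfinite set $\dom(p) \cup \dom(q)$, hyperfinite by Lemma \ref{lem:subset}(1), with value $p_I + q_I$ on that domain; commutativity and associativity of $+$ on $\polyn R n$ are inherited from $R$.

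For the monomial-times-polynomial operation, fix $(I, c) \in \hn^{(n)} \times R^*$ and $q \in \polyn R n$. Component-wise addition in $\hn$ makes the shift $\sigma_I : J \mapsto I + J$ a definable injection on $\hn^{(n)}$. I would define $c \cdot {\tt x}^I \cdot q \in \polyn R n$ to assign the value $c \cdot q_J$ to $I + J$ for each $J \in \dom(q)$; its support is $\sigma_I(\dom(q))$, hyperfinite by Lemma \ref{lem:subset}(2).

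For each fixed $q$, I would then apply Theorem \ref{thm:recursion} with $X = \hn^{(n)} \times R^*$, $Y = \polyn R n$, $y_0 = $ the zero polynomial, $g(I, c) = c \cdot {\tt x}^I \cdot q$, and $h = +$ (commutative and associative by the previous step). This yields a definable $G_q : \F(X) \to \polyn R n$ satisfying $G_q(\emptyset^*) = 0$ and $G_q(A \cup^* \{(I,c)\}^*) = G_q(A) + c \cdot {\tt x}^I \cdot q$. Setting $pq := G_q(p)$ and invoking the uniform dependence of the recursion on its parameters (last clause of Theorem \ref{thm:recursion}) yields a single definable function of $(p, q)$.

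It remains to verify bilinearity, commutativity, and the monomial product rule. The cleanest route is to establish by set induction on $\dom(p)$ the symmetric coefficient formula
$$(pq)_K = \sum_{I + J = K,\; I \in \dom(p),\; J \in \dom(q)} p_I q_J$$
for every $K \in \hn^{(n)}$, where the sum ranges over a hyperfinite subset of $\dom(p) \times \dom(q)$ (hyperfinite by Proposition \ref{prop:image}(3)) and is well-defined via the hyperfinite summation provided by Proposition \ref{prop:recN}. Once this symmetric formula is in hand, commutativity follows from that of $R$ and of monomial addition, bilinearity is immediate, and ${\tt x}^I \cdot {\tt x}^J = {\tt x}^{I+J}$ is a direct reading. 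The main obstacle will be the inductive step: the recursion produces $pq$ as a running sum in which a freshly added summand $c \cdot {\tt x}^I \cdot q$ may contribute monomials that already appear in previous summands, and one must check that the coefficient-wise definition of $+$ in $\polyn R n$ correctly merges these overlaps so that the symmetric formula persists through the induction.
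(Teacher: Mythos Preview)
Your proposal is correct and follows essentially the same approach as the paper: both define $pq$ by recursion on the hyperfinite support of $p$ via Theorem~\ref{thm:recursion}, with $h$ equal to polynomial addition and $g$ the monomial-times-$q$ map. The only cosmetic difference is that you recurse on $p$ directly as an element of $\F(\hn^{(n)}\times R^*)$ while the paper recurses over $A\in\F(\hn^{(n)})$ and reads the coefficient $p_I$ from the fixed parameter $p$; you are also more explicit than the paper about the addition operation and about verifying bilinearity and commutativity through the symmetric coefficient formula.
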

\begin{proof} Given $p,q\in \polyn R n = \pfun {\hn^{(n)}} R$, the idea is to define the product $pq$ by recursion on $\dom(p)\in \F(\hn^{(n)})$ using Theorem \ref{thm:recursion}. 
More precisely, we fix $p,q,n$ as parameters and, given $A\in \F(\hn^{(n)})$, we let $p_{|A}$ be the restriction of $p$ to $\dom(p) \cap^* A$. We then define the product $p_{|A} q \in \polyn R n$ by recursion on $A\in \F(\hn^{(n)})$ as follows. 
\begin{itemize}
\item If $A$ is empty, the product $p_{|A} q$ is $0$; 
\item if $A$ is obtained from $B\subset A$ adding a new monomial ${\tt x}^I$, then $p_{|A} q = p_{|B}q + p_I {\tt x}^I q$ where $p_I$ is the coefficient of ${\tt x}^I$ in $p$ and ${\tt x}^I q$ is the polynomial which assigns to a monomial of the form ${\tt x}^{I +J}$ the coefficient $q_J$. 
\end{itemize}
\end{proof}

\begin{rem} \label{rem:standard poly} If $\pF(K) = \Fin(K)$, then $\polyn R n \cong R[x_i]_{i<n}$ where $R[x_i]_{i<n}$ is the usual ring of polynomials over $R$ in $n$ variables. 
\end{rem}

\begin{rem}
The rings $\polyn R n$ are uniformly definable as $n$ varies in $\hn$, so the union $\Polyn R = \bigcup_{n\in \hn} \polyn R n$ is definable and we have 
 \begin{align*}
 \Polyn R & = \bigcup_{n\in \hn} \pfun{\hn^{(n)}}{R^*}\\
 & \subset \; \pfun {\pfun {\hn} {\hn}} {R^*}\\
  & \subset \; \F (\F (\hn \times \hn) \times {R^*}).
 \end{align*}
 \end{rem}

\begin{defn} \label{defn:sum of polynomials}
Given $m\in \hn$ and a hyperfinite sequence $(p_i)_{i<m}$ in $({\polyn R n})^{(m)}$, we can define its hyperfinite sum $\sum_{i< m} p_i \in \polyn R n$ by recursion on $m\in \hn$ using Proposition \ref{prop:recN}. 
\end{defn}

\begin{rem}
It can be proved by induction on $m\in \hn$ that $\polyn R n$ satisfies the distributivity  law $(\sum_{i<m} p_i) q = \sum_{i<m} (p_iq)$ even for non-standard values of $m\in \hn$. 
\end{rem}

We now show how to evaluate a non-standard polynomial $p\in \polyn R n$ given an assignment $f\in K^{(n)}$ of values to its variables. 

\begin{defn} \label{defn:ev}
Given $p \in \polyn R n$ and $f\in K^{(n)}$, we want to define the value $\ev(p,f)\in K$ of $p$ at $f$ in such a way that the function $p\in \polyn R n \mapsto \ev(p,f) \in K$ is a definable morphism of rings sending ${\tt x}_i$ to $f(i)$. 

Suppose first that $p$ consists of a single monomial ${\tt x}^I$ where $I\in \hn^{(n)}$. 
Then $$\ev({\tt x}^I, f) := \prod_{m < n} f(m)^{I(m)} \in K$$ where the product is defined by recursion on $n$ (Proposition \ref{prop:powers}).
In the general case we define $p_{|A}$ as in the proof of Lemma \ref{lem:product} and we define
 $\ev(p_{|A},f)$ by recursion on $A\in \F(\hn^{(n)})$ (Theorem \ref{thm:recursion}) as follows:  
\begin{itemize}
	\item 
If $A = \emptyset^*$, then $\ev(p_{|A}, f) = 0$; 
\item if $A$ is obtained from $B\subset A$ adding a new monomial ${\tt x}^I$, then $\ev(p_{|A}, f) = \ev(p_{|B},f) + p_I \ev({\tt x}^I, f)$ where $p_I$ is the coefficient of ${\tt x}^I$ in $p$. 
\end{itemize}
\end{defn} 

\begin{rem}It is easy to see that the evaluation function respects the hyperfinite sums of polynomials introduced in Definition \ref{defn:sum of polynomials}. 
\end{rem}

We have seen that $\hQ$ is a definable subfield of $K$. With the help of the evaluation function we obtain many other definable subfields.

\begin{defn}\label{defn:generated-field}
Let $n\in \hn$ and let $f\in K^{(n)}$. We define 
$$\hQ[f]^{\text{def}} = \{\ev(p, f) \mid p\in \polyn {\hQ} n\} \subseteq K$$ 
and we let $\hQ(f)^{\text{def}} \subseteq K$ be the quotient field of the ring $\hQ[f]^{\text{def}}$. 

It is easy to see that $\hQ[f]^{\text{def}}$ depends only on the image of $f$. Moreover, for every $a\in \F(K)$ there is $n\in \hn$ and $f\in K^{(n)}$ such that $a = \img(f)$, so we can define 
$$\hQ[a]^{\text{def}} = \hQ[f]^{\text{def}}.$$ We call $\hQ[a]^{\text{def}}$ the definable subring generated by $a$ and we call $\hQ(a)^{\text{def}} = \hQ(f)^{\text{def}}$ the definable subfield generated by $a$. 

In a similar way we define the definable subring $\hQ[S]^{\text{def}} \subseteq K$ generated by a finite subset $S = \{a_1, \ldots, a_n, b_1, \ldots, b_m\}$ of $K\cup \F(K)$ where $a_1, \ldots, a_n\in K$ and $b_1, \ldots, b_m\in \F(K)$ (with $n,m$ standard): it suffices to let $A = \{a_1\}^* \cup^* \ldots, \cup^* \{a_n\}^* \cup^* b_1 \cup^* \ldots \cup^* b_m \in \F(K)$
and put $\hQ[S]^{\text{def}} = \hQ[A]^{\text{def}}$. Similarly we define $\hQ(S)^{\text{def}} = \hQ(A)^{\text{def}}$. 
\end{defn}

\begin{defn} \label{defn:algebraic} Let $R$ be a definable subring of $K$. Given $p \in R[{\tt x}]^{\text{def}}:= \polyn R 1$ and $a \in K$ we let 
$$p(a):=\ev(p, c_a)$$
 where $c_a \in K^{(1)}$ is the (constant) function with value $a\in K$.

 We say that $a\in K$ is \emph{definably algebraic} over $R$  if there is a non-zero $p \in R[{\tt x}]^{\text{def}}$ such that $p(a) = 0$.  Furthermore, if $R = \hQ[A]^{\text{def}}$ ($A\in \F(K)$) we say that $a$ is definably algebraic over $A$. 
\end{defn}

\begin{defn} \label{defn:alg closed} We say that:
\begin{itemize}
\item $\ca K$ is definably algebraically closed if for every non-constant $p\in K[{\tt x}]^{\text{def}}$, there is $a\in K$ with $p(a) = 0$. 
\item $\ca K$ has hyper-infinite transcendence degree if for every $A\in \F(K)$, there is $x\in K$ which is not definably algebraic over $A$. 
\end{itemize}
\end{defn}

\section{Definable ideals}\label{sec:definable-ideals}
As above we work in a field of definable characteristic zero $\ca K = (K, \F(K))$ and we consider definability in the sense of $\ca K$. 

\begin{defn}
Let $R$ be a definable subfield of $K$. Given $n \in \hn$ and $f\in K^{(n)}$, let 
$${\ideal I}_f = \{p\in \polyn R n \mid \ev(p,f)=0\}.$$
 Note that ${\ideal I}_f$ is a definable prime ideal and we call it the definable ideal of $f$ over $R$. 
\end{defn}

\begin{defn}
Given $n\in \hn$ and a definable ideal $\ideal I\subset \polyn R n$, we say that $\alpha\in K^{(n)}$ is a generic zero of $\ideal I$ if $\ev (p, \alpha) = 0 \iff  p\in \ideal I$ for all $p\in \polyn R n$. 
\end{defn}

Clearly if $\ideal I$ has a generic zero it is a prime ideal. 
Conversely, we have:  

\begin{prop}[Existence of generic zeros] \label{prop:generic} Let $R$ be a definable subfield of $K$ generated by a hyperfinite set (Definition \ref{defn:generated-field}). Suppose that $\ca K = (K, \F(K))$ is definably algebraically closed and of hyper-infinite transcendence degree. 
Let $n\in \hn$ and let $\ideal I \subseteq \polyn R n$ be a definable prime ideal. Then:  
\begin{enumerate}
\item 
$\ideal I$ has a generic zero. 
\item Let $r\leq n$ and let ${\ideal I}_r = \ideal I \cap \polyn R r$. Then every generic zero $\bar \alpha \in K^{(r)}$ of ${\ideal I}_r$ can be extended to a generic zero $\bar \alpha \bar \beta \in K^{(n)}$ of $\ideal I$. 
\end{enumerate}
\end{prop}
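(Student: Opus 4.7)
The plan is to deduce (1) from (2) and prove (2) by extending the generic zero one coordinate at a time. For (1), take $r=0$: since $R$ is a field and $\ideal I \subseteq \polyn R n$ is a proper prime ideal, $\ideal I_0 = \ideal I \cap R = (0)$, so the empty tuple is a (trivial) generic zero of $\ideal I_0$. For (2), given a generic zero $\bar\alpha \in K^{(r)}$ of $\ideal I_r$, consider the definable set
\[
\ca U = \{ s \in \hn \mid s \leq r \text{ or } s > n \text{ or } \bar\alpha \text{ extends to some generic zero of } \ideal I_s\}.
\]
The set $\ca U$ contains $r$ by hypothesis, and we will show that it is closed under successor. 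By the induction scheme for $\hn$ (Theorem \ref{thm:PA}(2)) we then get $\ca U = \hn$, so $n \in \ca U$, which gives (2).

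\textbf{One-step extension.} Suppose $\bar\gamma \in K^{(s)}$ is a generic zero of $\ideal I_s$ with $r \leq s < n$. Denote the last variable of $\polyn R{s+1}$ by $y$ and let $\pi : \polyn R{s+1} \to \polyn{R[\bar\gamma]^{\text{def}}}{1}$ be the substitution map sending the first $s$ variables to $\bar\gamma$. Its kernel is the ideal of $\polyn R{s+1}$ generated by $\ideal I_s$, which sits inside $\ideal I_{s+1}$; a short primality argument shows that $\pi^{-1}(J) = \ideal I_{s+1}$, where $J := \pi(\ideal I_{s+1})$ is a definable prime ideal. Let $F = R(\bar\gamma)^{\text{def}}$, the definable subfield of $K$ generated by the hyperfinite set $A_0 \cup \img(\bar\gamma)$, where $R = \hQ(A_0)^{\text{def}}$. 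If $J = 0$ (\emph{transcendental case}), then $\ideal I_{s+1} = \ker\pi$; the hypothesis of hyper-infinite transcendence degree (Definition \ref{defn:alg closed}) produces some $\beta \in K$ not definably algebraic over $A_0 \cup \img(\bar\gamma)$, hence transcendental over $F$, and for any $p \in \polyn R{s+1}$ we have $\ev(p,\bar\gamma\beta) = (\pi p)(\beta)$, which vanishes iff $\pi p = 0$ iff $p \in \ideal I_{s+1}$. If $J \neq 0$ (\emph{algebraic case}), the extension $J \cdot F[y]^{\text{def}}$ is a non-zero prime ideal in the one-variable polynomial ring $F[y]^{\text{def}}$, hence principal, generated by an irreducible $m(y) \in F[y]^{\text{def}}$; such an $m$ is obtained from a non-zero element of $J$ of minimal $\hn$-degree, available by well-foundedness of $\hn$ on definable subsets (a consequence of Theorem \ref{thm:PA}(2)). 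Clearing denominators we may take $m \in \polyn K 1$, so definable algebraic closure of $\ca K$ supplies a root $\beta \in K$. Since $(m)$ is maximal in $F[y]^{\text{def}}$, the evaluation kernel $E_\beta \cdot F[y]^{\text{def}}$ at $\beta$ is forced to coincide with $(m) = J\cdot F[y]^{\text{def}}$; intersecting back with $\polyn{R[\bar\gamma]^{\text{def}}}{1}$ (and using that both $J$ and $E_\beta$ are saturated with respect to this intersection, by the same primality argument as before) yields $E_\beta = J$, so $\bar\gamma\beta$ is a generic zero of $\ideal I_{s+1}$.

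\textbf{Main obstacle.} The crux is the algebraic case, where one must legitimize Euclidean-style operations on non-standard one-variable polynomials over a definable subfield. Concretely, one needs: any non-zero definable ideal of $F[y]^{\text{def}}$ is principal and admits a minimal-degree generator; and an irreducible element of $F[y]^{\text{def}}$ (possibly of non-standard degree) has a root in $K$. The second is immediate from Definition \ref{defn:alg closed} once denominators are cleared so that $m \in \polyn K 1$. The first is a one-variable instance of the definable Hilbert basis theorem (Theorem \ref{thm:hilbert}), and its proof reduces to definable Euclidean division, which can be implemented by recursion on $\hn$ (Proposition \ref{prop:recN}). A secondary check, which is mostly bookkeeping, is that $\img(\bar\gamma)$ lies in $\F(K)$ and that $R[\bar\gamma]^{\text{def}}$ is well-defined via Definition \ref{defn:generated-field}; this is automatic once $\bar\gamma \in K^{(s)}$ is viewed as a hyperfinite function whose image is hyperfinite.
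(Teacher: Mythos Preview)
Your argument is correct and follows essentially the same route as the paper: (internal) induction on the number of coordinates, with a one-step extension split into an algebraic case (minimal-degree polynomial, Euclidean division for non-standard one-variable polynomials, root supplied by definable algebraic closure) and a transcendental case (hyper-infinite transcendence degree). The only difference is presentational---you phrase the algebraic case via the substitution map $\pi$, the image ideal $J$, and localization to $F[y]^{\text{def}}$, whereas the paper carries out the same computation by explicit division with remainder of $s(\bar\alpha,{\tt y})$ by $p_{\min}(\bar\alpha,{\tt y})$.
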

\begin{proof} Polynomial division with remainder can be extended to non-standard polynomials in one variable by induction on the non-standard degrees. For readability, we use the same notation for classical and non-standard polynomials, so we write $p(\bar {\tt x})$ for a possibly non-standard polynomial in the variables $\bar {\tt x}$ and $p(\bar \alpha)$ instead of $\ev(p, \bar \alpha)$. 

We show by induction on $r\leq n$ that ${\ideal I}_r = \ideal I \cap \polyn R r$ has a generic zero. For $r=0$ there is nothing to prove since $\ideal I_0 \cap R = \{0\}$ (because $\ideal I$ is a proper ideal). Now let $0<r<n$ and assume by induction hypothesis that ${\ideal I}_r$ has a generic zero $\bar \alpha\in K^{(r)}$. We will prove that $\ideal I_{r+1}$ has a generic zero of the form $\bar \alpha \beta\in K^{(r+1)}$ for some $\beta\in K$. 

Let $p(\bar {\tt x}, {\tt y}) \in {\ideal I}_r \subseteq \polyn R {r+1}$ where we write $\bar {\tt x}$ for the first $r$ variables and $\tt y$ for the last variable. Then $p(\bar \alpha, {\tt y})$ is a polynomial in $\tt y$ over the definable ring $R [\bar \alpha]^{\text{def}}$. 

\smallskip 
Case 1. There is a polynomial $p(\bar {\tt x}, {\tt y})\in \ideal I_{r+1}$ such that $p(\bar \alpha, {\tt y})$ is not the zero polynomial of $R(\bar \alpha)[\tt y]^{\text{def}}$. Then choose $p_{\min}(\bar {\tt x}, {\tt y}) \in \ideal I_{r+1}$ such that $p_{\min}(\bar \alpha, {\tt y})$ is not the zero polynomial and has minimal degree in $\tt y$. Choose $\beta \in K$ such that $p(\alpha, \beta) = 0$. We claim that $\bar \alpha \beta$ is a generic zero of ${\ideal I}_{r+1}$. To this aim let $s(\bar {\tt x}, {\tt y})\in R[\bar {\tt x}, \tt y]^{\text{def}}$ be an arbitrary polynomial and let us show that 
$$s(\bar \alpha, \beta) = 0 \iff s(\bar {\tt x}, {\tt y}) \in \ideal I_{r+1}.$$ 
Suppose first that $s(\bar \alpha, \beta) = 0$. Then $p_{\min}(\bar \alpha, {\tt y})$ divides $s(\bar \alpha, {\tt y})$ in $R(\bar \alpha)[\tt y]^{\text{def}}$. This implies that there are polynomials $q(\bar {\tt x}, {\tt y}), b(\bar {\tt x})$ with $b(\bar \alpha) \neq 0$ such that 
$s(\bar \alpha, {\tt y}) = p_{\min}(\bar \alpha, {\tt y}) \frac {q(\bar \alpha, {\tt y})}{b(\bar \alpha)}$.
 Let $p(\bar {\tt x}, {\tt y}) = b(\bar {\tt x}) s(\bar {\tt x}, {\tt y}) - p_{\min}(\bar {\tt x}, {\tt y}) q(\bar {\tt x}, {\tt y})$ and write it in the form $p(\bar {\tt x}, {\tt y}) = \sum_{i=0}^m c_i(\bar {\tt x}) {\tt y}^i$. By construction $p(\bar \alpha, {\tt y})$ is the zero polynomial of $R(\alpha)[\tt y]^{\text{def}}$, so $c_i(\bar \alpha) = 0$ for all $i=1, \ldots, m$.  By the induction hypothesis $c_i(\bar {\tt x}) \in {\ideal I}_r$ for all $i$, hence $p(\bar {\tt x}, {\tt y}) \in \ideal I_{r+1}$. Since also $p_{\min}(\bar {\tt x}, {\tt y}) \in \ideal I_{r+1}$, we deduce that $b(\bar {\tt x}) s(\bar {\tt x}, {\tt y}) \in \ideal I_{r+1}$. Again by induction $b(\bar {\tt x}) \nin \ideal I_{r}$ and, since ${\ideal I}_{r+1}$ is prime, $s(\bar {\tt x}, {\tt y}) \in \ideal I_{r+1}$. 

Conversely, suppose that $s(\bar {\tt x}, {\tt y}) \in \ideal I_{r+1}$. Then
$s(\bar \alpha, {\tt y}) = p_{\min}(\bar \alpha, {\tt y}) \frac {q(\bar \alpha, {\tt y})}{b(\bar \alpha)}  + \frac{r(\bar \alpha, {\tt y})}{c(\bar \alpha)}$ for some polynomials $q(\bar {\tt x},y), b(\bar {\tt x}), r(\bar {\tt x}, {\tt y}), c(\bar {\tt x})$ where $r(\bar \alpha, {\tt y})$ has lower degree in $y$ than $p_{\min}(\bar \alpha, {\tt y})$ and $b(\bar \alpha), c(\bar \alpha) \neq 0$. From the minimality of $p_{\min}(\bar {\tt x},y)$, it follows $r(\bar \alpha, {\tt y})$ is the zero polynomial, so $r(\bar \alpha, \beta) = 0$. Since $p_{\min}(\bar \alpha, \beta)=0$, we then obtain $s(\bar \alpha, \beta) = 0$. 

\smallskip
Case 2. Assume case 1 does not hold. Then for every $p(\bar {\tt x}, {\tt y})\in \ideal I_{r+1}$, the polynomial $p(\bar \alpha, {\tt y})$ is the zero polynomial of $R(\bar \alpha)[\tt y]^{\text{def}}$. Choose $\beta\in K$ transcendental over $R(\bar \alpha)^{\text{def}}$. We show that $\bar \alpha \beta$ is a generic zero of ${\ideal I}_{r+1}$. Let $s(\bar {\tt x}, {\tt y}) = \sum_{i=1}^m c_i(\bar {\tt x}) \tt y^i \in R[\bar {\tt x}, \tt y]^{\text{def}}$. 
Suppose first that $s(\bar \alpha, \beta) = 0$. Since $\beta$ is transcendental over $R(\bar \alpha)^{\text{def}}$,  $s(\bar \alpha, {\tt y}) = \sum_{i=1}^m c_i(\bar \alpha) \tt y^i$ is the zero polynomial, namely $c_i(\bar \alpha) = 0$ for all $i$. By the induction hypothesis $c_i(\bar {\tt x}) \in {\ideal I}_r$ for all $i$. Hence $s(\bar {\tt x}, {\tt y}) = \sum_{i=1}^m c_i(\bar {\tt x}) \tt y^i \in \ideal I_{r+1}$ (because definable ideals are easily shown to be closed under hyperfinite linear combinations). 

Now suppose that $s(\bar {\tt x}, {\tt y})\in \ideal I_{r+1}$. Since case 1 fails, $s(\bar \alpha, {\tt y})$ is the zero polynomial. So in particular $s(\bar \alpha, \beta) = 0$ and we are done. 
\end{proof}

The next goal is to  prove a definable version of Hilbert's basis theorem. We need a definition. 
 
\begin{defn} Let $(P,\leq)$ be a partially ordered set and let $A \subseteq P$ be a subset of $P$. 
We say that $B\subseteq A$ is a {\em basis} of $A$ if every element of $A$ is greater or equal than some element of $B$. Note that the empty set has the empty basis.
\end{defn}

\begin{rem} It is easy to see that if $A \subseteq P$ has a finite basis, then the subset of its minimal elements forms a basis, which is, in fact, the smallest basis of $A$ with respect to inclusion. However, if $A$ does not have a finite basis, the minimal elements of $A$ need not form a basis. 
\end{rem}

We need the following hyperfinite version of Dickson's lemma. Recall that $\hn = \hn(\ca K)$ is a model of $\PA(\ca K)$ (Theorem \ref{thm:PA}). Given $n \in \hn$, by the componentwise order on $\hn^{(n)}$ we mean the partial order given by $\alpha \leq \beta \leftrightarrow \alpha(i) \leq \beta(i)$ for every $i \in (n)$. 

\begin{lem}[Dickson's lemma] \label{lem:Dikson}
Given $n\in \hn$, put on $\hn^{(n)}$ the componentwise order. Then every definable subset $A\subseteq \hn^{(n)}$ has a hyperfinite basis $B\subseteq A$.  
\end{lem}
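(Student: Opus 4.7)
The plan is to induct on $n\in\hn$ using the induction scheme available in $\hn$ by Theorem~\ref{thm:PA}. The induction is metatheoretic: for each formula $\varphi$ defining $A$ we run the induction in $\hn$ letting the parameters (including $n$) vary, which handles the fact that the derived subsets $A_{i,k}$ appearing in the inductive step are definable by a different formula than $A$ but still fall under ``all definable subsets'' of the inductive hypothesis at stage $n$. The base case $n=0$ is trivial as $\hn^{(0)}$ is a singleton.

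For the inductive step from $n$ to $n+1$, I would take a non-empty definable $A\subseteq \hn^{(n+1)}$, pick any $a=(a_0,\dots,a_n)\in A$, and consider, for each $i\le n$ and each $k<a_i$, the set
\[
A_{i,k}:=\{\,b\in A : b(i)=k\,\}.
\]
Via the projection that forgets the $i$-th coordinate, $A_{i,k}$ corresponds to a definable subset of $\hn^{(n)}$ (the componentwise order is preserved, since the $i$-th coordinate is constant), so by the inductive hypothesis it has a hyperfinite basis. I would then define $M_{i,k}$ to be the set of minimal elements of $A_{i,k}$ and put
\[
B:=\{a\}\cup \bigcup_{(i,k)\in J} M_{i,k}, \qquad J:=\{(i,k): i\le n,\; k<a_i\}.
\]
Any $b\in A$ is either $\ge a\in B$, or satisfies $b(i)<a_i$ for some $i$, in which case $b\in A_{i,b(i)}$ and some $m\in M_{i,b(i)}\subseteq B$ satisfies $m\le b$; thus $B$ is a basis of $A$.

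That $B$ is hyperfinite reduces to three observations: the index set $J$ is hyperfinite (iteratively adjoining the columns $\{i\}\times(a_i)$ via Proposition~\ref{prop:recN} and binary unions); the assignment $(i,k)\mapsto M_{i,k}$ is uniformly definable so its image is hyperfinite by Lemma~\ref{lem:subset}(2); and a hyperfinite union of hyperfinite sets is hyperfinite, which I would prove by set induction on the outer family using Definition~\ref{defn:hyperfinite}(4).

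The main obstacle is the uniformity issue: the inductive hypothesis asserts only the existence of a hyperfinite basis for each $A_{i,k}$, but to combine them into a single hyperfinite union one needs a uniformly definable choice. I would sidestep this by strengthening the inductive claim to assert that the set of minimal elements of $A$ is already a hyperfinite basis. To propagate this strengthening one proves: if $A$ admits a hyperfinite basis $B$, then the minimal elements of $A$ lie in $B$ (a minimal $x\in A$ dominates some $b\in B\subseteq A$, forcing $x=b$), are therefore hyperfinite by Lemma~\ref{lem:subset}(1), and themselves form a basis of $A$ (any $y\in A$ dominates a minimal element of the non-empty hyperfinite set $B\cap\{x:x\le y\}$, and any non-empty hyperfinite subset of a poset has a minimal element, by a further set induction on the subset).
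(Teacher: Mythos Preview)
Your inductive step (choose $a\in A$, slice by $A_{i,k}=\{b\in A:b(i)=k\}$ for $k<a_i$, take minimal elements) is a perfectly good proof of the classical Dickson lemma, and the strengthening to ``the minimal elements already form a hyperfinite basis'' is a nice way to make the choice of basis canonical and hence uniformly definable in the parameters $i,k$. That part is fine.

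The gap is precisely the sentence you flag in your first paragraph and then wave away: ``the induction is metatheoretic: for each formula $\varphi$ defining $A$ we run the induction in $\hn$ \ldots\ which handles the fact that the derived subsets $A_{i,k}$ \ldots\ are definable by a different formula than $A$''. These two clauses are incompatible. If the induction is run \emph{in} $\hn$ for a fixed formula $\varphi$, then the inductive hypothesis at stage $n$ speaks only about sets defined by $\varphi$ (with arbitrary parameters), not about the $A_{i,k}$, which are defined by the formula $\psi(y,\bar p,i,k)\equiv\varphi(\text{insert}_i(y,k),\bar p)$. You cannot appeal to ``all definable subsets'' at stage $n$, because that quantification over formulas is exactly what prevents the predicate from being first-order and hence usable in the $\PA$ induction scheme. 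If instead the induction is genuinely metatheoretic, you only conclude the result for standard $n$, whereas the lemma is stated for arbitrary $n\in\hn$.

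The paper confronts this issue head-on: it first writes out the naive induction, explicitly observes that $P(n)$ is not first-order, and then replaces it by a first-order predicate $P_D(n)$ restricted to a single uniformly definable family $\mathcal C_D$ of subsets that is \emph{closed under the operations used in the inductive step} (sections and projections, encoded via a reindexing function $f$ and a tuple $\alpha$ of fixed coordinate values). Your slicing operation $A\mapsto A_{i,k}$ is a special case of the paper's $D_{n,m,k,\alpha,f}$, so your argument could be repaired by the same device: build a family closed under coordinate insertion, verify that closure, and then run the first-order induction on $P_D(n)$. But as written, your proposal does not do this, and the justification you give for why the induction goes through for non-standard $n$ is the step that fails.
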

\begin{proof} 
A natural approach would be to attempt a proof by induction of the statement
$$P(n) :\iff \text{every definable subset $A$ of $\hn^{(n)}$ has a hyperfinite basis}$$ 
However, this does not work because the predicate $P(n)$ is not first-order definable, so we cannot use it in the scheme of induction when $n\in \hn$ is non-standard. To remedy, we will need to modify $P(n)$, but to motivate the changes we first prove $P(1)$ and the induction step $P(n-1)\implies P(n)$ with the above definition of $P(n)$. This will not suffices to conclude $\forall n \in \hn\; P(n)$, but then we show how to modify $P(n)$ to solve the problem. 

\smallskip
(Base case): $P(1)$ is clear because any non-empty definable subset of $\hn$ has a minimum (Lemma \ref{lem:least number}). 

\smallskip 
(Induction step):
Assume that $n>1$ and $P(n-1)$ holds. We prove $P(n)$. Let $A$ be a definable subset of $\hn^{(n)}$. For $i\in \hn$, let $A_i = \{\alpha \in \hn^{(n-1)} \mid \alpha i \in A\}$ where $\alpha i \in \hn^{(n)}$ is the concatenation of $\alpha\in \hn^{(n-1)}$ and $i\in \hn$. Note that $\bigcup_{i\in \hn}A_i$ is equal to the projection $p(A)\subseteq \hn^{(n-1)}$ of $A$ on the first $n-1$ coordinates. By $P(n-1)$ we have: 
\begin{enumerate}
\item for all $i\in \hn$, $A_i \subseteq \hn^{(n-1)}$ has a hyperfinite basis $B_i \subseteq A_i$. 
\item $p(A) \subseteq \hn^{(n-1)}$ has a hyperfinite basis $B \subseteq p(A)$. 
\end{enumerate}
Since $p(A) = \bigcup_{i\in \hn}A_i$ and $B \subseteq p(A)$ is hyperfinite, there is $j\in \hn$ such that $B \subseteq \bigcup_{i\leq j} A_i$. 
By induction on $j$ it follows that the set $\bigcup_{i\leq j} B_i \times \{i\}$ is hyperfinite. We claim that it is a basis of $A$.  To prove the claim, let $(\alpha, k) \in A$. Assume first $k \leq j$. Since $\alpha \in A_k$, there is $\beta \in B_k$ such that $\beta \leq \alpha$, and then $(\beta,k) \leq (\alpha,k)$ and $(\beta, k) \in \bigcup_{i\leq j} B_i \times \{i\}$. Assume now $k>j$. Since $\alpha \in p(A)$, there is $\beta \leq \alpha$ such that $\beta\in B$. By the choice of $j$, there is $i \leq j$ such that $\beta\in A_i$, so there is $\gamma \in B_i$ such that $\gamma \leq \beta$. Since $(\gamma, i)\leq (\alpha, k)$, the claim is proved.

\smallskip 
We conclude that $P(n)$ holds for the standard values of $n$, and we now show how to modify $P(n)$ to treat the general case.  
The idea is to replace the quantification over all definable sets in the definition of $P(n)$ with a quantification over a uniform family of definable sets which is stable under taking sections and projections (as in (1)-(2) above). These operations can be iterated in any order, so we need to handle projections on arbitrary subsets of the coordinates. This can be done as follows.

Fix a definable set $D \subset \hn \times \Seq(X)$ and for $n\in \hn$, let $D_n = \{s\in \hn^{(n)} \mid (n,s) \in D\}$. 
We want to describe a definable family $\ca C_D$ which contains $D_n$ for all $n\in \hn$ and all the sets obtained from $D_n$ by iterating the operations of taking sections and projections. We will then consider the first-order predicate
$$P_D(n) :\iff \text{every subset $A$ of $\hn^{(n)}$ belonging to $\ca C_D$ has a hyperfinite basis}$$ 
and prove $\forall n\in \hn \; P_D(n)$ as in the first part of the proof. This will suffice since every definable set $A \subset \hn^{(n)}$ belongs to a family $\ca C_D$ for some choice of $D$.

Given $t, n \in \hn$ and a function $f\in \fun {(t)} {(n)}$, let $p_f:\hn^{(n)}\to \hn^{(t)}$ be the function which sends $(x_i\mid i<n)$ to $(x_{f(i)} \mid i <t)$.  
The elements of $\ca C_D$ are sets $D_{n,m,k,\alpha,f} \subseteq \hn^{(m)}$ depending on $D$ and five parameters $n,m,k,\alpha,f$ (in addition to the parameters in $D$) with $n,m,k\in \hn$, $m+k\leq n$, $\alpha \in \hn^{(k)}$, $f\in \fun {(m+k)} {(n)}$. The definition is
$$D_{n,m,k,\alpha, f} = \{x\in \hn^{(m)} \mid \exists y \in D_n  : x\alpha = p_f(y)\}$$
where $x\alpha \in \hn^{(m+k)}$ is the concatenation of $x \in \hn^{(n)}$ and $\alpha\in \hn^{(k)}$.  
Note that $D_n = D_{n,n,0,\emptyset, \text{id}}$, so $D_n \in \ca C_D$. We claim that the family $\ca C_D$ is stable under taking sections and projections, in the sense that if $A\in \ca C_D$ and $i\in \hn$, then $A_i = \{\alpha \in \hn^{(n-1)} \mid \alpha i \in A\} \in \ca C_D$ and $\bigcup_{i\in \hn}A_i \in \ca C_D$. To this aim write $A = D_{n,m,k,\alpha,f}$ and observe that $A_i = D_{n,m-1,k+1,i\alpha, f}$ and $\bigcup_{i\in \hn} A_i = D_{n,m-1, k, \alpha, g}$ where $g(t) = f(t)$ for $t<m$ and $g(t) = f(t+1)$ for $t\geq m$.  

 Reasoning as in the first part of the proof we have $P_D(1)$ and $P_D(n-1)\implies P_D(n)$, so by the scheme of induction we have $\forall n \in \hn \; P_D(n)$. \end{proof}

\begin{defn}  
Given a definable ideal $\ideal I\subseteq \polyn R n$ and a definable subset $J$, we say that $J$ generates $\ideal I$, and write $\ideal I = \langle J \rangle$, if $\ideal I\supseteq J$ and every element of $\ideal I$ is a hyperfinite sum of multiples of elements of $J$. 
\end{defn}

Every definable subset $J$ of $\polyn R n$ generates a unique definable ideal $\ideal I$ consisting of the polynomials $p\in \polyn R n$ of the form $\sum_{i<m} g_i a_i$ where $m\in \hn$,  $(g_i)_{i<m}\in J^{(m)}$ and $(a_i)_{i<m} \in {(\polyn R n)}^{(m)}$. 

\begin{defn}
A definable ideal $\ideal I \subseteq \polyn R n$ is called a monomial ideal if it is generated by monomials. 
\end{defn}

Note that if $\ideal I$ is a monomial ideal, then the monomials of any element $f$ of $\ideal I$ belong to $\ideal I$. Indeed, assume $f=\sum_i h_i m_i$ where the $m_i$ are monomials in $\ideal I$. Each summand $h_im_i$ can be written as a $R$-linear combination of monomials in $\ideal I$. Now use the fact that if two polynomials coincide they have the same monomials.

\begin{cor}[Rephrasing of Dickson's lemma] \label{cor:Dikson2}
Every definable monomial ideal $\ideal I$ in $\polyn R n$ is hyperfinitely generated by monomials, namely there is a hyperfinite set of monomials $J\in \pF(\hn^{(n)})$ such that, for each $f \in  \ideal I$, every monomial of $f$ is divisible by some monomial in $J$.   
\end{cor}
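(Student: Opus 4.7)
The plan is to translate the monomial ideal $\ideal I$ directly into a definable subset of $\hn^{(n)}$ and then apply Lemma \ref{lem:Dikson}. Using the identification from Definition \ref{defn:poly} that identifies a monomial ${\tt x}^I$ with its exponent vector $I \in \hn^{(n)}$, the divisibility relation ${\tt x}^J \mid {\tt x}^I$ in $\polyn R n$ corresponds precisely to the componentwise order $J \leq I$ on $\hn^{(n)}$. The map $I \mapsto {\tt x}^I$ (where ${\tt x}^I$ is the non-standard polynomial with domain $\{I\}^*$ and coefficient $1$ there) is a definable injection $\hn^{(n)} \to \polyn R n$.

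Define
\[
A = \{I \in \hn^{(n)} \mid {\tt x}^I \in \ideal I\}.
\]
This is a definable subset of $\hn^{(n)}$, so by Lemma \ref{lem:Dikson} it has a hyperfinite basis $B \in \pF(\hn^{(n)})$ with $B \subseteq A$. I claim $J := B$ witnesses the corollary.

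Indeed, let $f \in \ideal I$ and let ${\tt x}^I$ be any monomial of $f$, i.e., $I \in \dom(f)$. Since $\ideal I$ is a monomial ideal, the characterization recalled just before the statement tells us that every monomial of an element of $\ideal I$ lies in $\ideal I$; hence ${\tt x}^I \in \ideal I$, that is $I \in A$. Because $B$ is a basis of $A$ under the componentwise order, there exists $J_0 \in B$ with $J_0 \leq I$, which is exactly the statement that ${\tt x}^{J_0}$ divides ${\tt x}^I$.

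There is no real obstacle beyond Lemma \ref{lem:Dikson} itself: the corollary is a reformulation once one passes from monomials to exponent vectors. The only point that deserves a brief check is that the set $A$ is indeed definable uniformly from $\ideal I$, which is immediate from the definability of the map $I \mapsto {\tt x}^I$ noted above. Nothing needs to be verified about coefficients of non-monomial elements of $\ideal I$, since the hypothesis that $\ideal I$ is a monomial ideal packages exactly the reduction to the monomial support.
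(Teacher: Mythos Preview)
Your proof is correct and follows essentially the same approach as the paper: both define the set of exponent vectors of monomials lying in $\ideal I$, apply Lemma \ref{lem:Dikson} to obtain a hyperfinite basis, and then use the correspondence between the componentwise order and monomial divisibility together with the characterization of monomial ideals. Your write-up spells out the final verification in slightly more detail, but the argument is the same.
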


\begin{proof} 
Let $D\subseteq \hn^{(n)}$ be the set of monomials in $\ideal I$ and note that  $D$ is definable and $\ideal I = \langle D \rangle$. By Lemma \ref{lem:Dikson}, $D$ has a hyperfinite basis $J\in \pF (\hn^{(n)})$. Since the partial order on $\hn^{(n)}$ corresponds to the divisibility of monomials, $D \subseteq \langle J \rangle$, so $\ideal I = \langle J \rangle$.  
\end{proof}

We can deduce Hilbert's basis theorem from Dickson's lemma as in the classical case (see \cite[Theorem 1.13]{Stu}). 

\begin{thm}[Hilbert's basis theorem] \label{thm:hilbert} 
Every definable ideal $\ideal I$ in $\polyn R n$ is hyperfinitely generated. 
\end{thm}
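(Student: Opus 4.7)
The plan is to adapt the Gröbner basis proof of Hilbert's basis theorem (\cite[Theorem 1.13]{Stu}) to the non-standard definable setting, exploiting the hyperfinite Dickson lemma already established in Corollary \ref{cor:Dikson2}. Throughout I fix the lexicographic order $<_{\text{lex}}$ on $\hn^{(n)}$, which is a monomial order (compatible with addition of exponents and refining the componentwise order). A preliminary point used repeatedly is that $<_{\text{lex}}$ \emph{definably well-orders} $\hn^{(n)}$: every non-empty definable subset $A \subseteq \hn^{(n)}$ has a lex-minimum. One proves this by building the coordinates of the minimum one at a time via Proposition \ref{prop:recN}, letting the $i$-th coordinate be the least element in $\hn$ of the non-empty definable set of $i$-th coordinates of tuples in $A$ whose first $i$ coordinates match those already chosen; each such least element exists by the least number principle in $\hn$ (Theorem \ref{thm:PA}(2)).

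Consequently, for every nonzero $p \in \polyn R n$ the leading monomial $\text{in}(p) \in \hn^{(n)}$ is well-defined. Let $\text{in}_<(\ideal I) \subseteq \polyn R n$ be the definable monomial ideal generated by $\{\text{in}(p) : p \in \ideal I, p \neq 0\}$; by Corollary \ref{cor:Dikson2} it admits a hyperfinite basis $J \in \pF(\hn^{(n)})$, so that every monomial in $\text{in}_<(\ideal I)$ is divisible by some element of $J$.

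The main obstacle is to lift $J$ to a hyperfinite subset $H \subseteq \ideal I$ without appealing to arbitrary choice. To each $m \in J$ I attach the unique \emph{reduced} polynomial $h_m \in \ideal I$ characterised by $\text{in}(h_m) = m$, leading coefficient $1$, and every other monomial of $h_m$ lying outside $\text{in}_<(\ideal I)$. Uniqueness is immediate: the difference of two such candidates would be a nonzero element of $\ideal I$ with leading monomial outside $\text{in}_<(\ideal I)$, a contradiction. For existence I argue by induction on $m$ in the lex well-order on $J$. Let $F_m$ denote the non-empty definable set of $f \in \ideal I$ with $\text{in}(f) = m$ and leading coefficient $1$. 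Assume toward contradiction that no $f \in F_m$ is reduced; then for each $f \in F_m$ the bad-monomial set $B(f) := (\supp(f) \setminus \{m\}) \cap \text{in}_<(\ideal I)$ is a non-empty hyperfinite set, hence admits a lex-maximum $b(f)$, and the definable set $\{b(f) : f \in F_m\}$ has a lex-minimum $\mu^*$ by the preliminary observation. Pick $f^* \in F_m$ with $b(f^*) = \mu^*$. Some $m' \in J$ divides $\mu^*$, and necessarily $m' \leq_{\text{lex}} \mu^* <_{\text{lex}} m$, so $h_{m'}$ has already been constructed. Replacing $f^*$ by $f^* - c(\mu^*/m') h_{m'}$ (with $c$ the coefficient of $\mu^*$ in $f^*$) kills $\mu^*$ while preserving the leading data; all newly introduced monomials lie strictly below $\mu^*$ in lex, since the non-leading monomials of the reduced $h_{m'}$ are lex-below $m'$ and multiplication by $\mu^*/m'$ preserves this order. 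The resulting polynomial lies in $F_m$ with strictly smaller (or empty) bad-monomial maximum, contradicting the minimality of $\mu^*$. Since $m \mapsto h_m$ is then a definable function (by uniqueness), $H := \{h_m : m \in J\}$ is hyperfinite by Lemma \ref{lem:subset}(2).

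It remains to show $\ideal I = \langle H \rangle$. Suppose for contradiction $g \in \ideal I \setminus \langle H \rangle$ is chosen with $\text{in}(g)$ lex-minimal (again via the definable well-ordering). Some $m \in J$ divides $\text{in}(g)$, and setting $g' = g - c (\text{in}(g)/m) h_m$ with $c$ the leading coefficient of $g$ yields $g' \in \ideal I$ with $\text{in}(g') <_{\text{lex}} \text{in}(g)$. By minimality $g' \in \langle H \rangle$, whence $g \in \langle H \rangle$, a contradiction.
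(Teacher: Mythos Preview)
Your proof is correct and follows the same Gr\"obner-basis strategy as the paper: fix a monomial order refining divisibility, pass to the initial ideal, invoke the hyperfinite Dickson lemma (Corollary \ref{cor:Dikson2}) to obtain a hyperfinite monomial basis $J$, lift $J$ to a hyperfinite set of polynomials in $\ideal I$, and finish by a lex-minimal counterexample argument. The one substantive difference is the lifting step. The paper simply asserts that one ``can find a hyperfinite set $G \subset \ideal I$'' with $\{\text{in}_\prec(p)\mid p\in G\}=J$, while you flag this as the main obstacle and build the \emph{reduced} Gr\"obner basis so that the selection $m\mapsto h_m$ is definable by uniqueness and $H$ is hyperfinite via Lemma \ref{lem:subset}(2). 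Your construction is sound but heavier than necessary: the paper's assertion is justified by a short set-induction on hyperfinite $J'\subseteq^* J$, since the property ``there exists a hyperfinite $G'\subseteq \ideal I$ with $\{\text{in}_\prec(p)\mid p\in G'\}=J'$'' is trivially preserved under adjoining a single monomial (choose any witnessing polynomial and add it to $G'$). Either route leads to the identical final reduction step.
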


\begin{proof}
Fix a definable total order $\prec$ on the set of monomials which refines the componentwise order, for instance the lexicographic order. Every polynomial $p$ has a hyperfinite set of monomials, so by induction it has a largest monomial $\text{in}_\prec (p)$ with respect to $\prec$. We call $\text{in}_\prec (p)$ the initial monomial of $p$. Now let $\text{in}_\prec (\ideal I)$ be the ideal generated by the initial monomials of the elements of $\ideal I$.
By Corollary \ref{cor:Dikson2} $\text{in}_\prec (\ideal I)$ can be generated by a hyperfinite subset $J \subset \text{in}_\prec (\ideal I)$ consisting of monomials. Each element of $J$ is the initial monomial of some element of $\ideal I$.  
Since $J$ is hyperfinite, by Lemma \ref{lem:image} there is a hyperfinite set $G\subseteq \ideal I$ such that $\{\text{in}_\prec (p)\mid p\in G\} = J$. 
We claim that $G$ generates $\ideal I$. If this is not the case, there exists $p\in \ideal I$ which is not in the ideal generated by $G$ and whose initial monomial $\text{in}_\prec (p)$ is minimal with respect to the ordering $\prec$. Since $\text{in}_\prec(p) \in \text{in}_\prec(\ideal I)$, there are $g\in G$ and a monomial ${\tt x}^d$ (with  $d\in \hn^{(n)}$) such that $p-{\tt x}^d g$ is a polynomial with a strictly smaller initial monomial. But then $p - {\tt x}^d g$ and ${\tt x}^d g$ are generated by $G$, hence so is $p$.  
\end{proof}

Let $\hn = \hn(\ca K)$, $\hQ= \hQ(\ca K)$, and consider the structure $\ca Q = (\hQ, \hn, +, \cdot)$. Then $\ca Q$ admits all definable finite power sets, and the definable finite power sets in the sense of $\ca Q$ are also definable finite power sets in the sense of $\ca K^{eq}$ (Lemma \ref{lem:robust-hyperfinite-in-Q}(1)). By Definition \ref{defn:polynomials}, $\poly n  \subseteq \; \F(\hn^{(n)} \times \hQ^*)$, so we may assume that the ring of non-standard polynomials $\poly n$ is a definable set in $(\hQ,\hn,+,\cdot)^{eq}$.

\begin{cor} \label{cor:ideal}  
Every ideal $\ideal I \subseteq \poly n$ definable in $(K, \F(K))^{eq}$ is definable in $(\hQ, \hn,+,\cdot)^{eq}$. 
\end{cor} 

\begin{proof}
By Theorem \ref{thm:hilbert}, $\ideal I$ has a hyperfinite basis $G\in \F(\poly n)$. Then $G$ is definable in $(\hQ, \hn, +, \cdot)^{eq}$ by Lemma \ref{lem:robust-hyperfinite-in-Q}(2). By Proposition \ref{prop:enumeration} there is a hyperfinite enumeration $(g_i)_{i<m} \in  (\poly n)^{(m)}$ of $G$. Then $p\in \ideal I$ if and only if there is a hyperfinite sequence $(a_i)_{ i<m}\in (\poly n)^{(m)}$ such that $p = \sum_{i<m}a_i g_i$ (defined by recursion using Proposition \ref{prop:recN} in $(\hQ, \hn, +,\cdot)^{eq}$; one can also check that the polynomial operations do not depend on the ambient structure). This gives a definition of $\ideal I$ in $(\hQ, \hn,+,\cdot)^{eq}$. 
\end{proof}

By a similar argument, one can prove more generally that if $R$ is a subfield of $K$ definable in $(K,\F(K))$, then every definable ideal $\ideal I\subseteq \polyn R n$ is definable with parameters from $\F(R)$.
\section{The weak second-order theory of the complex field}\label{sec:wso-complex}

In this section we describe the complete theory $T^{\Fin}_\C$ of the structure $(\mathbb{C},\Fin(\mathbb{C}))$ and a recursive subtheory $T_{\text{rec}} \subset T^{\Fin}_\C$. 

\begin{defn}\label{defn:complete} Let $T_{\text{rec}}$ be the recursive theory whose models are the structures $\ca K = (K,\F(K))$ satisfying the following properties: 
\begin{enumerate}
\item $K$ is a field. 
\item $\F (K)$ is a definable finite power set of $K$ (Definition \ref{defn:hyperfinite}). 
\item $\ca K$ has definable characteristic zero (Definition \ref{defn:char zero}). 
\item $\ca K$ is definably algebraically closed (Definition \ref{defn:alg closed}). 
\item $\ca K$ has hyper-infinite transcendence degree (Definition \ref{defn:alg closed}). 
\end{enumerate}
\end{defn}
By Theorem \ref{thm:PA}, if $\ca K\models T_{\text{rec}}$, then $(\hn(\ca K),+,\cdot)$ is a model of $\PA$. By G\"odel's incompleteness theorems, since $T_{\text{rec}}$ is recursive, $T_{\text{rec}}$ cannot be complete. 

\begin{defn} 
Let $T^{\Fin}_\C = T_{\text{rec}} \cup T_\N$ be the union of $T_{\text{rec}}$ and an axiom scheme $T_\N = \{\varphi^{\hn} \mid (\N,+,\cdot) \models \varphi\}$ expressing the fact that every true formula of arithmetic holds when relativized to the definable predicate $\hn$. 

The models of $T^{\Fin}_\C$ are the models $(K, \F(K))$ of $T_{\text{rec}}$ such that $(\hn(\ca K),+,\cdot) \equiv (\mathbb N, +, \cdot)$.
\end{defn}

We will prove that the complete theory of $(\C, \Fin(\C))$ is axiomatized by $T^{\Fin}_\C$. 
Clearly $(\C, \Fin(\C))$ is a model of $T^{\Fin}_\C$. To prove the completeness of $T^{\Fin}_\C$, it suffices to show that any two models $\ca A, \ca B$ of $T^{\Fin}_\C$ are elementarily equivalent. We need the following lemma. 

\begin{lem} \label{lem:main} Let $\mathcal A= (A, \F (A))$ and $\mathcal B = (B, \F(B))$ be models of $T_{\text{rec}}$. Suppose that $\Psi: \hn(\ca A) \cong \hn (\ca B)$ is an isomorphism in the language $\{+,\cdot\}$. Then $\Psi$ is an elementary map when considered as a partial function from $\ca A$ to $\ca B$. In particular $\ca A \equiv \ca B$. 
\end{lem}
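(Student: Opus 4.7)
The plan is to apply Proposition~\ref{prop:back forth} inside saturated elementary extensions. Pass to $\kappa$-saturated elementary extensions $\ca A'\succ \ca A$ and $\ca B'\succ \ca B$ of the same cardinality $\kappa$ for some sufficiently large uncountable $\kappa$. Since $\hn$ is a definable predicate and its induced structure is (an extension of) that of PA, both $\hn(\ca A')$ and $\hn(\ca B')$ are saturated models of PA of cardinality $\kappa$, and they are elementarily equivalent because $\Psi\colon \hn(\ca A)\cong \hn(\ca B)$. A standard back-and-forth between saturated models of PA of the same cardinality therefore extends $\Psi$ to an isomorphism $\Psi^{*}\colon \hn(\ca A')\cong \hn(\ca B')$.

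Define $\ca G$ to be the family of partial maps $\iota\colon \ca A'\pto \ca B'$ extending $\Psi^{*}$ whose domain has the form $\hn(\ca A')\cup S$, with $S$ a finite subset of $A'\cup \F(A')$, and which are partial isomorphisms in the sense of Definition~\ref{defn:piso}. The map $\Psi^{*}$ itself witnesses $\ca G\ne \emptyset$. I would then verify the back-and-forth property of Definition~\ref{defn:back forth}. The main case is when one adds a field element $a\in A'$ to the domain of some $\iota\in \ca G$. By Corollary~\ref{cor:type ideal}, the type of $a$ over the definable subfield $\hQ(\dom\iota)^{\text{def}}\subseteq A'$ is determined by the definable prime ideal $\ideal I\subseteq \polyn {\hQ(\dom\iota)^{\text{def}}} 1$ of non-standard polynomials vanishing at~$a$. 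By Theorem~\ref{thm:hilbert}, $\ideal I$ is hyperfinitely generated, and by Corollary~\ref{cor:ideal} it is definable from parameters coded in $\hn(\ca A')$ together with the elements of $S$; applying $\iota$ to these parameters produces a corresponding definable prime ideal $\ideal J\subseteq \polyn{\hQ(\img\iota)^{\text{def}}}1$ on the $\ca B'$-side. Since $\ca B'$ is definably algebraically closed of hyper-infinite transcendence degree, Proposition~\ref{prop:generic} supplies a generic zero $b\in B'$ of $\ideal J$, and $\iota\cup\{(a,b)\}$ is a partial isomorphism extending $\iota$.

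Handling a new element $a\in \F(A')$ is reduced to the previous case: by Proposition~\ref{prop:enumeration}, $a$ can be enumerated as $\{a_{0},\ldots,a_{n-1}\}$ for some $n\in \hn(\ca A')$, and one applies the field-element case $n$ times (more precisely, one uses the already-constructed $\Psi^{*}(n)$ and the definable bijection to form the corresponding hyperfinite set on the $\ca B'$-side via the set-induction scheme). The forth side is symmetric, by saturation of $\ca B'$ and the fact that every element of $B'$ is either a field element or a hyperfinite set that can be enumerated in $\hn(\ca B')$. Having verified the back-and-forth property, Proposition~\ref{prop:back forth} gives that every $\iota\in \ca G$ is elementary; since $\Psi=\Psi^{*}\!\!\upharpoonright_{\hn(\ca A)}\in \ca G$, $\Psi$ is elementary as a partial map $\ca A\pto \ca B$, and in particular $\ca A\equiv \ca B$.

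The main obstacle is the field-element step: it rests on two non-trivial facts already established in the paper, namely that the type of a tuple over $\hn$ is captured by a polynomial ideal (Corollary~\ref{cor:type ideal}) and that this ideal is hyperfinitely generated and hence coded in $\hn$ (Theorem~\ref{thm:hilbert} together with Corollary~\ref{cor:ideal}). Together with Proposition~\ref{prop:generic}, these reduce the elementary equivalence of $\ca A$ and $\ca B$ to the isomorphism of their arithmetic parts, which is exactly what $\Psi$ provides.
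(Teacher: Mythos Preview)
Your argument contains a logical circularity: you invoke Corollary~\ref{cor:type ideal} to pin down the type of a new field element by its ideal, but in the paper Corollary~\ref{cor:type ideal} is \emph{deduced from} Lemma~\ref{lem:main} (via Corollary~\ref{cor:iota elementary}), so it is not available here. What the paper does instead is set up the back-and-forth directly at the level of hyperfinite sequences: a partial map $\iota\in\ca G_0$ has finite domain inside $\bigcup_{n\in\hn} A^{(n)}$ and records, for each concatenation $f_1*\cdots*f_k$ of elements of its domain, the definable ideal $\ideal I_{f_1*\cdots*f_k}\subseteq\poly m$. By Corollary~\ref{cor:ideal}(2) this ideal is definable in $(\hn,+,\cdot)^{eq}$, hence transfers to the $\ca B$-side, and Proposition~\ref{prop:generic}(2) furnishes a generic zero there extending the images already chosen. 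No appeal to types is needed; the ideals themselves are the data being matched, and the fact that equal ideals give equal types is a \emph{consequence} of the lemma, not an input.

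There is a second gap in your handling of a new element $a\in\F(A')$. You enumerate $a$ as $\{a_0,\ldots,a_{n-1}\}$ with $n\in\hn(\ca A')$ and propose to ``apply the field-element case $n$ times''. But $n$ may be non-standard, so this is not a legitimate external iteration, and your parenthetical appeal to the set-induction scheme does not repair it: that scheme is internal and does not manufacture a partial isomorphism in the metatheory. The paper sidesteps this entirely by treating the enumerating sequence $f\in A^{(n)}$ as a \emph{single} element of the back-and-forth system $\ca G_0$, so a matching $g\in B^{(n)}$ is found in one step via Proposition~\ref{prop:generic}. One then passes to an induced map $\hat\iota$ on $A\cup\F(A)$ sending $f(m)\mapsto g(m)$ and $\img(f)\mapsto\img(g)$, and verifies directly that $\hat\iota$ preserves the atomic formulas of the language of $T_{\text{rec}}$.

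A smaller point: the paper's proof works directly with $\ca A$ and $\ca B$ (after identifying $\hn(\ca A)$ with $\hn(\ca B)$ along $\Psi$) and requires no saturation whatsoever for Lemma~\ref{lem:main}. Your passage to $\kappa$-saturated extensions of cardinality $\kappa$ is both unnecessary and, as stated, needs extra set-theoretic hypotheses; in the paper such considerations enter only later, in Theorem~\ref{thm:completeness}, where Shoenfield absoluteness is invoked.
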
 

\begin{proof} It suffices to show that the restriction of $\Psi$ to any finite subset of $\hn (\ca A)$ belongs to a family of partial isomorphisms $\ca G$ from $\ca A$ to $\ca B$
 with the back and forth property. Replacing $\ca A$ with an isomorphic structure, we may assume that $\hn(\ca A) = \hn(\ca B)$, and $\Psi$ is the identity on $(\hn,+,\cdot) = (\hn (\ca A),+,\cdot) = (\hn (\ca B),+,\cdot)$. 
We can then assume that $\hQ(\ca A) = \hQ(\ca B)$ and 
write
$$(\hQ, \hn, +, \cdot)^{eq} = (\hQ(\ca A), \hn (\A), +,\cdot)^{eq} = (\hQ(\ca B), \hn (\B), +,\cdot)^{eq}$$
In particular, since $\Poly$ and its operations are definable in $(\hQ, \hn, +,\cdot)^{eq}$, we may assume that $\ca A$ and $\ca B$ have the same non-standard polynomials over $\hQ$. 

The idea is to show that the relations between elements of $A\cup \F(A)$ can be coded by definable ideals in $\poly n$ (for various $n$) and such ideals are definable in $(\hQ,\hn, +, \cdot)^{eq}$ (Corollary \ref{cor:ideal}). We will use such ideals to define a family $\ca G$ of partial isomorphisms with the back and forth property. 
To this aim we observe that an element $a \in \F(A)$ can be coded by a function $f\in A^{(n)}$ with $a = \img(f)$ (Proposition \ref{prop:enumeration}) where $n\in \hn$ can be non-standard. Thus we need to ensure that the relations between elements of   $\bigcup_{n \in \hn} A^{(n)}$ be preserved by the partial maps in the family. This motivates the following definition. 

Let $\ca G_0$ be the set of all partial maps $\iota : \bigcup_{n \in \hn} A^{(n)} \pto \bigcup_{n\in \hn} B^{(n)}$ with the following properties: 
\begin{itemize}
\item $\dom(\iota)$ is finite (in the standard sense). 
\item If $n\in \hn$ and $f\in A^{(n)} \cap \dom(\iota)$ and then $\iota(f) \in B^{(n)}$. 
\item If $\{f_1, \ldots, f_k\} \subseteq \dom(\iota)$ and $f = f_1*\ldots *f_k \in A^{(m)}$ (where ``$*$'' is the concatenation), the definable ideal ${\ideal I}_f \subseteq \poly m$ of $f$ in $\ca A$ coincides with the definable ideal ${\ideal I}_g$ of $g = \iota(f_1)*\ldots * \iota(f_k)$ in $\ca B$. 
\end{itemize}

We claim that $\ca G_0$ has the back and forth property. 
Given $\iota$ as above with $\dom(\iota) = \{f_1, \ldots, f_k\}$ and $\img(\iota) = \{g_1, \ldots, g_k\}$, consider a new $f$ and we need to find a corresponding $g$. Let $\ideal I$ be the definable ideal of  $f_1*\ldots f_k*f$ in $\ca A$. Then $\ideal I$ is definable in $(\hQ,\hn, +,\cdot)^{eq}$ (Corollary \ref{cor:ideal}), hence it is also definable in $\ca B$ (by the same formula) and therefore it has a generic zero of the form $g_1* \ldots * g_k*g$ (Proposition \ref{prop:generic}(2)). This proves the forth direction and the back direction is analogous. 

\bigskip 
Given $\iota \in \ca G_0$, let $\hat \iota: A \cup \F(A) \pto B \cup \F(B)$ be the smallest function such that, for all $f\in A^{(n)}$ and $m<n$, if $\iota(f) = g$, then $\hat \iota$ maps $f(m)\in A$ to $g(m)\in B$ and $\img (f) \in \F(A)$ to $\img(g) \in \F(B)$. 
To verify that $\hat \iota$ is well defined, suppose that $f_1\in A^{(n_1)}$ and $f_2\in A^{(n_2)}$ belong to $\dom(\iota)$ and $f_1(m_1) = f_2(m_2)$ (where $f_1,f_2$ are not necessarily distinct). Then the definable ideal of $f_1*f_2$ contains the polynomial $\tt x_{m_1} - \tt x_{n_1+m_2}$. This polynomial then lies in the definable ideal of $g_1*g_2$ where $g_1 = \iota(f_1)$ and $g_2 = \iota(f_2)$. Since $\iota$ preserves the information contained in the ideal, $g_1(m_1) = g_2(m_2)$. Similarly one shows that if $\img(f_1) = \img(f_2)$, then $\img(g_1) = \img(g_2)$. The same arguments show that $\hat \iota$ is injective. Let us also observe that if $x\in \hQ$ belongs to the domain of $\hat \iota$, then $\hat \iota (x) = x$.

Now let $\ca G = \{\hat \iota \mid \iota \in \ca G_0\}$. Then $\ca G$ is a family of injective partial maps from $A\cup \F(A)$ to $B \cup \F(B)$ and it has the back and forth property because $\ca G_0$ does. 

We claim that every element $\hat \iota$ of $\ca G$ is a partial isomorphism, namely it preserves the quantifier free formulas, or equivalently the atomic formulas (negations can be handled by considering the inverse of $\hat \iota$). So we must prove: 
$$
\begin{cases}
x\in a \implies \hat \iota(x) \in \hat \iota(a)\\
x+y = z \implies \hat \iota(x) + \hat \iota(y) = \hat \iota(z)\\
x\cdot y= z \implies \hat \iota(x)\cdot \hat \iota(y) = \hat \iota(z)
\end{cases}
$$
Suppose for instance that $a\in \dom(\hat \iota)$ and $x\in a$. Then there is $f\in \dom(\iota)$ such that $a = \img(f)$ and there is $m\in \hn$ such that $x = f(m)$. Let $g = \iota(f)$. Then by definition of $\hat \iota$ we have $\hat \iota (x) = g(m) \in \img (g) = \hat \iota (\img f)) = \hat \iota (a)$. 

Now suppose that $x,y,z\in \dom(\hat \iota)$ and $x+y= z$. Then we can write $x = f_1(m_1), y=f_2(m_2), z= f_3(m_3)$ where $f_1 \in A^{(n_1)}$, $f_2\in A^{(n_2)}$, $f_3 \in A^{(n_3)}$ are in $ \dom(\iota)$ (not necessarily distinct) and $m_i<n_i$ for $i=1,2,3$. Let $g_i = \iota(f_i)$. Then the polynomial $\tt x_{m_1} + \tt x_{n_1+m_2} - \tt x_{n_1+n_2+m_3}$ belongs to the definable ideal of $f_1*f_2*f_3$, hence also to the definable ideal of $g_1*g_2*g_3$. It follows that $\hat \iota (x) + \hat \iota(y) = g_1(m_1) + g_2(m_2) = g_3(m_3) =  \hat \iota (z)$. 
The proof of $x\cdot y= z \implies \hat \iota(x)\cdot \hat \iota(y) = \hat \iota(z)$ is analogous. 

Since for any $n\in \hn \cap \dom(\hat \iota)$, we have $\hat \iota (n) = n = \Psi(n)$, it follows that every finite restriction of $\Psi$ can be extended to a map in $\ca G$ and therefore $\Psi$ is an elementary map by Proposition \ref{prop:back forth}. 
\end{proof}

\begin{cor}\label{cor:iota elementary} Let $\mathcal A= (A, \F (A))$ and $\mathcal B = (B, \F(B))$ be models of $T_{\text{rec}}$. Suppose that $\Psi: \hn(\ca A) \cong \hn (\ca B)$ is an isomorphism in the language $\{+,\cdot\}$. Let $\ca G_0$ be defined as in the proof of Lemma \ref{lem:main}. Then any $\iota \in \ca G_0$ is a partial elementary map from $(A, \F(A))^{eq}$ to $(B, \F(B))^{eq}$. 
\end{cor}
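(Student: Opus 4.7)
The plan is to re-run the back-and-forth argument of Lemma \ref{lem:main} one level up, working in the $eq$-expansions rather than only on base sorts. Almost all the work is already done: the proof of Lemma \ref{lem:main} established (i) that $\ca G_0$ has the back-and-forth property and (ii) that the associated family $\ca G = \{\hat\iota : \iota \in \ca G_0\}$ consists of partial isomorphisms from $(A,\F(A))$ to $(B,\F(B))$, hence each $\hat\iota$ is partial elementary by Proposition \ref{prop:back forth}.

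First I would build a family $\ca H$ of partial maps $\ca A^{eq} \pto \ca B^{eq}$ as follows. Given $\iota \in \ca G_0$ with extension $\hat\iota$, define $\eta_\iota$ on imaginaries by sending a $0$-definable equivalence class $[\bar a]_E$ (with $\bar a$ a tuple from the base sorts $A, \F(A)$ lying in $\dom(\hat\iota)$) to $[\hat\iota(\bar a)]_E$, and include $\iota$ itself on the sequence sort. Well-definedness follows from the elementarity of $\hat\iota$ proved in Lemma \ref{lem:main}: if $[\bar a]_E = [\bar a']_E$ with both tuples in $\dom(\hat\iota)$, then $E(\hat\iota(\bar a), \hat\iota(\bar a'))$ holds because $E$ is $0$-definable. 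Put $\ca H = \{\eta_\iota : \iota \in \ca G_0\}$.

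Next I would verify that each $\eta_\iota \in \ca H$ is a partial isomorphism for $L^{eq}$. Atomic formulas in $L^{eq}$ involve base relations (preserved by $\hat\iota$), equalities within each new sort (preserved by definition of $\eta_\iota$), and the projection maps $p_E$ (trivially preserved). The sequence-level atomic data ``$f$ has length $n$'' and ``$f(m) = a$'' are preserved by the compatibility of $\iota$ and $\hat\iota$ built into the definition of $\ca G_0$. Then I would check back-and-forth for $\ca H$: given $\eta_\iota$ and an arbitrary imaginary $e = [\bar a]_E \in \ca A^{eq}$, each element of $\F(A)$ occurring in $\bar a$ is the image of a hyperfinite sequence by Proposition \ref{prop:enumeration}, and each element of $A$ is a length-$1$ sequence, so extending $\eta_\iota$ to cover $e$ reduces to extending $\iota$ inside $\ca G_0$ by finitely many new sequences, which is possible by the back-and-forth property of $\ca G_0$.

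By Proposition \ref{prop:back forth}, every $\eta_\iota \in \ca H$, and in particular every $\iota \in \ca G_0$, is a partial elementary map from $(A, \F(A))^{eq}$ to $(B, \F(B))^{eq}$. The main subtlety is ensuring that the back-and-forth on $\ca G_0$ (which produces new sequences from the matching-ideals condition) is rich enough once we move to the $eq$ level; this works because any imaginary can be coded by a base-sort tuple, and every element of the base sorts is either a length-$1$ sequence or the image of some hyperfinite sequence, reducing extension in $\ca H$ to extension in $\ca G_0$.
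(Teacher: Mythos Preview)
Your overall strategy is sound, but there is a genuine gap where you assert that the projection maps $p_E$ are ``trivially preserved.'' An element $f\in A^{(n)}$ lives in the imaginary sort $\F(A\times A)$, which in $L^{eq}$ is realised as a quotient of base-sort quintuples $(X,Y,\alpha,\beta,C)$ via the coding of Lemma~\ref{lem:sets-of-pairs}. The only atomic $L^{eq}$-formulas linking $f$ to the base sorts are therefore of the shape $p_E(X,Y,\alpha,\beta,C)=f$; the predicates ``$f$ has length $n$'' and ``$f(m)=a$'' that you cite are \emph{not} atomic in $L^{eq}$. Your map $\eta_\iota$ is defined on this sort in two independent ways---by $f\mapsto\iota(f)$ on $\dom(\iota)$, and by $[\bar a]_E\mapsto[\hat\iota(\bar a)]_E$ via codes---and for $\eta_\iota$ (and its back-and-forth extensions) to be a partial $L^{eq}$-isomorphism these must agree: whenever a code $\bar a$ for some $f\in\dom(\iota)$ enters $\dom(\hat{\iota'})$, one needs $\hat{\iota'}(\bar a)$ to code $\iota(f)$. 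Nothing in the defining condition of $\ca G_0$ (matching of definable ideals of concatenations) says this directly, so it is not trivial.

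This compatibility is exactly the content of the paper's proof. Rather than running a second back-and-forth, the paper extends $\iota$ inside $\ca G_0$ so that sequences enumerating $X,Y,C$ are in the domain, then extends further inside $\ca G$ to pick up $\alpha,\beta$, obtaining an elementary base-sort map $\zeta$ whose domain contains a full code for each $f\in\dom(\iota)$. Using that $\zeta$ is elementary and fixes $\hn$ pointwise, one computes that $\zeta$ sends this code to a code of $\iota(f)$; hence the canonical $eq$-extension of $\zeta$ already contains $\iota$, and $\iota$ is elementary. If you insert this computation your argument becomes correct, but at that point it is essentially a repackaging of the paper's proof.
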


\begin{proof} 
We may assume $\hn (\ca A) = \hn(\ca B) = \hn$ and $\Psi$ is the identity. Recall that $\ca G_0$ is a set of partial maps $\iota : \bigcup_{n \in \hn} A^{(n)} \pto \bigcup_{n\in \hn} B^{(n)}$ with the back and forth property. 
This is not yet sufficient to conclude that every map in $\ca G_0$ is elementary since in the definition of the back and forth property (Definition \ref{defn:back forth}) we have not included the requirement that atomic formulas be preserved. However in the proof of Lemma \ref{lem:main} we have defined a family $\ca G$ of partial isomorphism from $A\cup \F(A)$ to $B \cup \F(B)$ with the back and forth property (hence a family of partial elementary maps), and we have seen that to any partial map $\iota\in \ca G_0$ we can associate a map $\hat \iota \in \ca G$. By definition, for every $f\in A^{(n)}\cap \dom(\iota)$, the domain of $\hat \iota$ contains the following elements: the element $\img(f)\in \F(A)$ whose extension is the image of the extension of $f$; the elements $f(m) \in A$ for $m<n$ (where $f(m) = y \iff (m,y) \in \ext f$). Moreover
$$
\begin{cases}
\hat \iota (\img(f)) = \img (\iota(f))\\
\hat \iota (f(m)) = \iota (f) (m) \text{ for } m<n
\end{cases}
$$
Every $\hat \iota \in \ca G$ is a partial elementary map, hence it induces a partial elementary map $\iota^{eq}$ from $\ca A^{eq}$ to $\ca B^{eq}$ in the natural way. We let $\ca G^{eq} = \{\gamma^{eq} \mid \gamma \in \ca G\}$. To prove that $\iota \in \ca G_0$ is an elementary map it suffices to show that $\iota$ can be extended to a map $\zeta = \eta^{eq}$ in $\ca G^{eq}$. Some work is needed, since there is no guarantee that $\iota^{eq}$ itself extends $\iota$ (the domain of $\iota^{eq}$ may not include the domain of $\iota$). 

Recall that $\bigcup_{n\in \hn} A^{(n)} \subset \F(\hn \times A) \subset \F(A\times A)$ are definable sets in $\ca A^{eq}$ and $\dom(\iota)$ is a finite subset of $\bigcup_{n\in \hn} A^{(n)}$. 

To find $\zeta$, suppose first that $\dom(\iota)$ contains a single element $f\in A^{(n)}$. Since $f\in \F(A\times A)$, by the proof of Lemma \ref{lem:nonstandard-sets-of-pairs}, $f$ is coded by a quintuple $Q = (X,Y,\alpha, \beta, C)$ with $X,Y,C\in \F(A)$ and $\alpha, \beta\in A$  (the details of the coding are not important). We can now extend $\iota$ to some map $\eta\in \ca G_0$ that contains in its domain hyperfinite sequences $f_X\in A^{(n_X)}, f_Y\in A^{(n_Y)}, f_C\in A^{(n_C)}$ enumerating $X,Y,C$, where $n_X, n_Y, n_C\in \hn$ may be non-standard. The associated map $\hat \eta\in \ca G$ will then contain in its domain $X,Y,C\in \F(A)$ and all the elements of $\ext X, \ext Y, \ext C$. Note that $\ext X = (n) \subset \hn$ and $\ext f \subseteq \ext X\times \ext Y$.  

We now extend $\hat \eta$ to a partial elementary map $\zeta \in \ca G$ that contains in its domain also $\alpha$ and $\beta$. 
The map $\zeta$ induces a partial elementary map $\zeta^{eq}$ from $\ca A^{eq}$ to $\ca B^{eq}$ that sends $f\in \F(A\times A)$ to the element $\zeta^{eq}(f) \in \F(B\times B)$ coded by $\zeta(Q) = (\zeta(X),\zeta(Y),\zeta(\alpha), \zeta(\beta), \zeta(C))$. Since $\zeta^{eq}$ is a partial elementary map and $f\in A^{(n)}$, we have $\zeta^{eq}(f)\in B^{(n)}$. Moreover, if $(x,y)\in \ext f$, then $x\in \hn$ and $(\zeta(x),\zeta(y)) = (x, \zeta(y)) \in \ext {\zeta^{eq} (f)}$, which we abbreviate as  $\zeta(f(x)) = \zeta^{eq}(f)(x)$. 

Using the fact that $\zeta \supseteq \hat \eta \supseteq \hat \iota$, and $f(x)\in \dom(\hat \iota)$ for all $x<n$, we have $\zeta(f(x)) = \hat \iota(f(x))$, so $\zeta^{eq}(f) = \iota(f)$. 

This concludes the proof when $\dom(\iota)$ contains a single element, and the general case is similar, recalling that the domain of $\iota$ is finite. 
\end{proof}

\begin{cor}\label{cor:type ideal}
Let $\ca K = (K, \F(K))$ be a model of $T_{\text{rec}}$. Let $\hn = \hn(\ca K)$ and $n\in \hn$. If $f\in K^{(n)}$, then the type of $f$ over $\hn$ is determined by its definable ideal $\ideal I = \ideal I_f\subseteq \poly n$. 
\end{cor}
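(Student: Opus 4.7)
The plan is to apply Corollary \ref{cor:iota elementary} in the degenerate case $\ca A = \ca B = \ca K$ with $\Psi = \id_{\hn}$. Given $f, g \in K^{(n)}$ with $\ideal I_f = \ideal I_g \subseteq \poly n$, and any finite tuple $\bar n = (n_1, \ldots, n_k) \in \hn^k$, I will exhibit a partial map $\iota$ in the family $\ca G_0$ from the proof of Lemma \ref{lem:main} whose domain is $\{f, h\}$, where $h := (n_1, \ldots, n_k) \in K^{(k)}$, with $\iota(f) = g$ and $\iota(h) = h$. Once such $\iota$ is in hand, Corollary \ref{cor:iota elementary} declares it a partial elementary map between two copies of $\ca K^{eq}$; since each entry $n_i$ is recovered from $h$ by a definable coordinate projection, any formula $\varphi(x, \bar n)$ with parameters from $\hn$ is equivalent to a formula $\psi(x, h)$ about the two sort-elements $f$ and $h$, and hence $\varphi(f, \bar n) \iff \varphi(g, \bar n)$. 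As $\bar n$ and $\varphi$ were arbitrary, this gives $\operatorname{tp}(f/\hn) = \operatorname{tp}(g/\hn)$.

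The step that does the work is verifying that $\iota \in \ca G_0$, which reduces to checking the ideal-matching condition for the concatenations $f * h$ and $h * f$ (the singleton cases $f, h$ being trivial). For $f * h$: since $\hn \subseteq \hQ$ (Definition \ref{defn:NZQ}), specialization of the last $k$ variables at $\bar n$ is a definable ring homomorphism $S : \polyn \hQ{n+k} \to \polyn \hQ n$. For every $p \in \polyn \hQ{n+k}$,
\[
  p \in \ideal I_{f * h} \iff p(f, \bar n) = 0 \iff S(p) \in \ideal I_f,
\]
and the analogous equivalence holds with $g$ in place of $f$. The hypothesis $\ideal I_f = \ideal I_g$ therefore forces $\ideal I_{f * h} = \ideal I_{g * h}$; the concatenation $h * f$ is handled symmetrically by specializing the first $k$ variables.

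The main obstacle, insofar as there is one, is this verification of the ideal condition, as it is the only point where one must use $\ideal I_f = \ideal I_g$ together with a concrete manipulation of non-standard polynomials. Everything else is formal: Lemma \ref{lem:main} and Corollary \ref{cor:iota elementary} already package the back-and-forth machinery and the elementarity of the sequence-level maps in $\ca G_0$, so the corollary reduces to pointing out that enlarging the domain by a hyperfinite sequence whose entries happen to lie in $\hQ$ preserves membership in $\ca G_0$.
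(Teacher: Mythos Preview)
Your proof is correct and follows the same route as the paper: apply Corollary~\ref{cor:iota elementary} with $\ca A = \ca B = \ca K$ and $\Psi = \id_{\hn}$. The paper's proof is terser---it takes $\iota$ with singleton domain $\{f\}$, observes $\iota \in \ca G_0$ because $\ideal I_f = \ideal I_g$, and invokes Corollary~\ref{cor:iota elementary} to conclude $\iota$ is elementary. Your version is more explicit about why the type \emph{over $\hn$} (rather than merely over $\emptyset$) is preserved: you enlarge the domain to $\{f,h\}$ with $h$ a tuple from $\hn$, and verify via the specialization argument that the ideal condition still holds. The paper leaves this step implicit, relying on the fact (established inside the proof of Lemma~\ref{lem:main}) that back-and-forth extensions of maps in $\ca G_0$ automatically fix $\hQ$-elements. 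Your explicit check of $\ideal I_{f*h} = \ideal I_{g*h}$ is exactly the content of that implicit step, so the two arguments are the same in substance.
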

\begin{proof}
Suppose $\ideal I_f = \ideal I_g$. Then the map $\iota$ sending $f$ to $g$ belongs to the set $\ca G_0$ defined in Lemma \ref{lem:main} taking $(A,\F(A)) = (B, \F(B)) = (K, \F(K))$, and therefore it is a partial elementary map from $\ca K$ to itself by Corollary \ref{cor:iota elementary}. 
\end{proof}

\begin{thm}\label{thm:completeness}
The complete theory of $(\C, \Fin(\C))$ is axiomatized by $T^{\Fin}_\C = T_{\text{rec}} \cup T_{\N}$. 
\end{thm}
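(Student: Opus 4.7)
The plan is to reduce completeness to the main back-and-forth result, Lemma~\ref{lem:main}. First, I would verify that $(\C,\Fin(\C))$ itself models $T^{\Fin}_\C$. The axioms of $T_{\text{rec}}$ (Definition~\ref{defn:complete}) hold: $\C$ is a field, $\Fin(\C)$ is by definition a definable finite power set, and by Lemma~\ref{lem:Z} we have $\hn((\C,\Fin(\C))) = \N$, which in particular gives definable characteristic zero. Because $\pF(\C)=\Fin(\C)$, the non-standard polynomials over $\C$ coincide with ordinary polynomials (Remark~\ref{rem:standard poly}), so definable algebraic closure reduces to the usual algebraic closure of $\C$, and hyper-infinite transcendence degree follows because every finite subset of $\C$ is contained in a field of finite transcendence degree while $\C$ has infinite transcendence degree over $\Q$. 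The arithmetic axioms $T_\N$ hold tautologically since $\hn = \N$ in this case.

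For completeness, let $\mathcal A=(A,\F(A))$ and $\mathcal B=(B,\F(B))$ be two models of $T^{\Fin}_\C$. Both satisfy $T_\N$, so $\hn(\mathcal A) \equiv \hn(\mathcal B) \equiv (\N,+,\cdot)$ in the language of semirings. To invoke Lemma~\ref{lem:main} I need an actual isomorphism of the $\hn$-parts, which I arrange by passing to saturated elementary extensions. Fix a regular uncountable cardinal $\kappa$ with $\kappa^{<\kappa}=\kappa$, and take $\kappa$-saturated elementary extensions $\mathcal A'\succeq \mathcal A$, $\mathcal B'\succeq \mathcal B$ of cardinality $\kappa$. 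Since $\hn$ is cut out by a single formula without parameters, any type in the language of arithmetic over a parameter set $S\subset \hn(\mathcal A')$ of size $<\kappa$ lifts to a type in the full language (just conjoin ``$x\in \hn$''), so it is realised in $\mathcal A'$, and the realisation automatically belongs to $\hn(\mathcal A')$. Hence $\hn(\mathcal A')$ is a $\kappa$-saturated model of $T_\N$ of cardinality exactly $\kappa$ (saturation forces cardinality at least $\kappa$, and the ambient structure bounds it). The same holds for $\hn(\mathcal B')$, and by the uniqueness of saturated models of a complete theory there exists a semiring isomorphism $\Psi\colon \hn(\mathcal A') \cong \hn(\mathcal B')$.

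Both $\mathcal A'$ and $\mathcal B'$ still model $T_{\text{rec}}$, so Lemma~\ref{lem:main} applies to $\Psi$ and yields $\mathcal A' \equiv \mathcal B'$. Combining with $\mathcal A \equiv \mathcal A'$ and $\mathcal B \equiv \mathcal B'$ gives $\mathcal A \equiv \mathcal B$, as required. The real content has already been packed into Lemma~\ref{lem:main}; what remains here is only the observation that passing to saturated extensions upgrades elementary equivalence of the arithmetic parts to an honest isomorphism. The one subtlety, which I would flag as the sole non-routine step, is the saturation transfer: that $\hn(\mathcal A')$ inherits $\kappa$-saturation from $\mathcal A'$. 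This works precisely because $\hn$ is $0$-definable, a built-in form of stable embeddedness of $\hn$ in the ambient structure; without that, one could at best extract elementary equivalence of the $\hn$-parts, which is insufficient to invoke Lemma~\ref{lem:main}.
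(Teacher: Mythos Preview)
Your approach is the same as the paper's: reduce to Lemma~\ref{lem:main} by passing to saturated extensions so that the $\hn$-parts become isomorphic rather than merely elementarily equivalent. The one genuine gap is the sentence ``Fix a regular uncountable cardinal $\kappa$ with $\kappa^{<\kappa}=\kappa$'': the existence of such a $\kappa$ is not a theorem of ZFC (it can fail, for instance, in models where the continuum function jumps by more than one step at every cardinal), so as written your argument establishes completeness only under an additional set-theoretic hypothesis. The paper addresses exactly this point by observing that the statement ``$T_{\text{rec}} \cup T_\N$ is complete'' has complexity at most $\Sigma^1_2$, and hence by Shoenfield absoluteness one may freely assume CH; under CH one takes $\kappa = \aleph_1$ and the rest of your argument goes through verbatim. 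An alternative fix, if you prefer to avoid absoluteness, is to replace saturated models by special models, which exist in ZFC in arbitrarily large cardinalities and still satisfy the uniqueness property you need.

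A minor additional point: ``$\kappa$-saturated elementary \emph{extensions} of cardinality $\kappa$'' presupposes $|\mathcal A|, |\mathcal B| \le \kappa$, which you have not arranged. Since only elementary equivalence is needed, either first pass down to countable models via L\"owenheim--Skolem, or simply replace ``elementary extension'' by ``elementarily equivalent model''.
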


\begin{proof}
By Shoenfield's absoluteness theorem \cite[Theorem 98]{Jech}, a statement of complexity $\Sigma^1_2$ is provable in ZFC (Zermelo-Fraenkel set theory with choice) if and only if it is provable in ZFC+GCH (where GCH is the generalized continuum hypothesis). To prove the completeness of $T^{\Fin}_\C$ we may then freely assume that the generalized continuum hypothesis holds. This guarantees that all structures have saturated extensions of any sufficiently large cardinality. Now note that if $\mathcal A= (A, \F (A))$ and $\mathcal B = (B, \F(B))$ are saturated models of $T_{\text{rec}}$ of the same cardinality, then $(\hn (\A),+,\cdot)$ and $(\hn (\B),+,\cdot)$ are saturated models of $Th(\N, +, \cdot)$ of the same cardinality, so they are isomorphic as structures in the language of arithmetic. We conclude using Lemma \ref{lem:main}. 
\end{proof}

Shoenfield's absoluteness lemma could be avoided using the fact that elementarily equivalent structures have isomorphic ultrapowers \cite{Shelah}. The two approaches lead to the same conclusion. 
With the same proof we obtain: 

\begin{thm}\label{thm:completeness-mod-peano}
Given a model $\ca K = (K, \F(K))$ of $T_{\text{rec}}$, the complete theory $T_{\ca K}$ of $\ca K$ is determined by the complete theory of $\hn (\ca K)$ and it is given by
$$T_{\ca K} = T_{\text{rec}} \cup T_{\hn(\ca K)}$$ 
where $T_{\ca K} = \{\varphi^{\hn} \mid (\hn(\ca K),+,\cdot) \models \varphi\}$. 
\end{thm}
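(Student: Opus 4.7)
The plan is to replicate the strategy of Theorem \ref{thm:completeness}, with $(\hn(\ca K),+,\cdot)$ playing the role of $(\N,+,\cdot)$. One direction is trivial: any $\ca K \models T_{\text{rec}}$ satisfies, by definition of $T_{\hn(\ca K)}$, all the relativized true sentences of its own $\hn$-part, and so $\ca K \models T_{\text{rec}} \cup T_{\hn(\ca K)}$. For the other direction, I must show that any two models $\ca A = (A,\F(A))$ and $\ca B = (B, \F(B))$ of $T_{\text{rec}} \cup T_{\hn(\ca K)}$ are elementarily equivalent.

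As in the proof of Theorem \ref{thm:completeness}, by Shoenfield's absoluteness theorem the statement to be proved is $\Sigma^1_2$, so it is provable in ZFC iff it is provable in ZFC+CH. Assuming CH, pass to saturated elementary extensions $\ca A' \succ \ca A$ and $\ca B' \succ \ca B$ of the same sufficiently large cardinality $\kappa$. Both $\ca A'$ and $\ca B'$ remain models of $T_{\text{rec}} \cup T_{\hn(\ca K)}$ by elementarity, and in particular $\hn(\ca A') \equiv \hn(\ca B') \equiv \hn(\ca K)$ in the language of arithmetic.

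The key remark is that saturation transfers to the definable predicate $\hn$: since $\hn(\ca A')$ is defined by a $0$-definable predicate in $\ca A'$, any $L_{\text{arith}}$-type over a subset of size $<\kappa$ of $\hn(\ca A')$ translates, via relativization of quantifiers to $\hn$, into a partial type in $\ca A'$ over the same parameters. By $\kappa$-saturation of $\ca A'$ this type is realized in $\ca A'$, and the realization must lie in $\hn(\ca A')$. Hence $(\hn(\ca A'),+,\cdot)$ is $\kappa$-saturated, and similarly for $(\hn(\ca B'),+,\cdot)$. Since these are two $\kappa$-saturated models of the same complete arithmetic theory $\Th(\hn(\ca K))$ of the same cardinality $\kappa$, they are isomorphic, yielding an isomorphism $\Psi : \hn(\ca A') \cong \hn(\ca B')$ in the language $\{+,\cdot\}$.

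Applying Lemma \ref{lem:main} to $\Psi$ gives $\ca A' \equiv \ca B'$, whence $\ca A \equiv \ca B$. The only ingredient beyond what was already used for Theorem \ref{thm:completeness} is the transfer of saturation from $\ca K$ to $\hn(\ca K)$, which is a routine relativization argument; everything else is a direct repetition, with $T_{\hn(\ca K)}$ replacing $T_\N$. I do not expect any substantial obstacle.
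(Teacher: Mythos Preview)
The proposal is correct and follows exactly the approach the paper intends: the paper simply writes ``With the same proof we obtain'' after Theorem~\ref{thm:completeness}, and you have spelled out that proof, including the routine observation that saturation passes to the $0$-definable predicate $\hn$. Nothing further is needed.
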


Next, we prove that the structure induced by $\ca K$ on $\hn(\ca K)$ is precisely $(\hn (\ca K), +, \cdot)$.

\begin{thm} \label{thm:stable-emebeddedness}
	Let $\ca K = (K, \F(K))$ be a model of $T_{\text{rec}}$ and let $\hn = \hn(\ca K)$. Then every 
definable subset of $\hn^n$ definable with parameters from $K \cup \F(K)$ is already definable with parameters from $\hn$ and it is in fact definable in the structure  $(\hn, +,\cdot)$. 
\end{thm}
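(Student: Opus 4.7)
The plan is to combine Corollary \ref{cor:type ideal} (which says that types over $\hn$ are determined by definable ideals of polynomials) with Corollary \ref{cor:ideal}(2) (which says such ideals descend to $(\hn,+,\cdot)^{eq}$), and then conclude via Proposition \ref{prop:EQ}.

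First I would package the parameters. By Proposition \ref{prop:enumeration}, any element of $\F(K)$ is the image of some $f \in K^{(n)}$, and two hyperfinite sequences can be concatenated into a longer one. So after rewriting the defining formula in terms of enumerations, we may assume the given set has the form $D = \{\bar a\in \hn^n \mid \ca K \models \psi(\bar a, f)\}$, with $\psi(\bar x,\bar y)$ parameter-free and $f\in K^{(m)}$ for some $m \in \hn$. For each $\bar a \in \hn^n$, concatenate to get $\bar a * f\in K^{(n+m)}$; by Corollary \ref{cor:type ideal}, the type of $\bar a * f$ over $\hn$ in $\ca K$ is determined by its definable ideal $\ideal I_{\bar a*f} \subseteq \poly{n+m}$. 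In particular, equality of these ideals for two tuples $\bar a_1,\bar a_2 \in \hn^n$ forces $\bar a_1 \in D \iff \bar a_2 \in D$, because $\psi(\bar x, \bar y)$ is one of the formulas witnessed by that type.

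Next I would analyse the ideal $\ideal I_{\bar a*f}$ arithmetically. For $p(\bar x,\bar y)\in \poly{n+m}$, writing $p = \sum_J r_J(\bar x)\bar y^J$ with $r_J\in \poly n$ and using that $\bar a \in \hn \subseteq \hQ$, partial evaluation at $\bar a$ yields a well-defined element $p(\bar a,\bar y)=\sum_J r_J(\bar a)\,\bar y^J \in \poly m$. Clearly $p\in \ideal I_{\bar a * f}$ if and only if $p(\bar a,\bar y)\in \ideal I_f$. The operation $(\bar a, p)\mapsto p(\bar a, \bar y)$ is definable in $(\hn,+,\cdot)^{eq}$, since $\poly n$ and $\poly m$ are sorts definable in $(\hn,+,\cdot)^{eq}$ and the evaluation is arithmetic (via Theorem \ref{thm:recursion}). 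By Corollary \ref{cor:ideal}(2), $\ideal I_f$ is definable in $(\hn,+,\cdot)^{eq}$ with some parameter $\theta$ (a hyperfinite generating set). Composing, the map $\bar a \mapsto \ideal I_{\bar a * f}$ is uniformly definable in $(\hn,+,\cdot)^{eq}$ with parameter $\theta$.

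Finally, since $(\hn,+,\cdot)$ is a model of $\mathsf{PA}$, its $^{eq}$-structure is interpreted in it via G\"odel coding of finite tuples and sets, so $\theta$ is coded by some tuple $\bar p \in \hn^k$. If $\bar a_1, \bar a_2 \in \hn^n$ have the same $(\hn,+,\cdot)$-type over $\bar p$, then $\ideal I_{\bar a_1 * f} = \ideal I_{\bar a_2 * f}$, whence $\bar a_1 \in D \iff \bar a_2 \in D$. Applying Proposition \ref{prop:EQ} with $X = \hn^n$, $T = T_{\text{rec}}$ (enriched with constants for the parameters), and $\Gamma$ the class of formulas in the arithmetic language with parameters $\bar p$, we conclude that $D$ is definable in $(\hn,+,\cdot)$ over $\bar p$. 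The main technical point is the uniform arithmetic definability of partial evaluation of non-standard polynomials; this is routine given the polynomial machinery of Sections \ref{sec:non-standard-poly} and \ref{sec:definable-ideals}, but requires some care because the degrees and the number of variables may be non-standard.
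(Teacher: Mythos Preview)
Your overall strategy—package the parameters into a single $f\in K^{(m)}$, use Corollary~\ref{cor:type ideal} to reduce to the ideal $\ideal I_f$, invoke Corollary~\ref{cor:ideal}(2) to descend to $(\hn,+,\cdot)$, and finish with Proposition~\ref{prop:EQ}—matches the paper's. The packaging step and the observation that $p\in \ideal I_{\bar a*f}\iff p(\bar a,\bar y)\in \ideal I_f$ are fine. However, the final step has a real gap.

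The inference ``if $\bar a_1,\bar a_2\in\hn^n$ have the same $(\hn,+,\cdot)$-type over $\bar p$ then $\ideal I_{\bar a_1*f}=\ideal I_{\bar a_2*f}$'' is false as stated. For instance, with $n=1$ the polynomial ${\tt x}_0-\bar a_1\in\poly{1+m}$ lies in $\ideal I_{\bar a_1*f}$ but lies in $\ideal I_{\bar a_2*f}$ only when $\bar a_1=\bar a_2$; in a nonstandard $\hn$ there are distinct elements with the same type over any finite $\bar p$. What equality of types over $\bar p$ gives you is merely that the two ideals have \emph{conjugate canonical codes}, not that they coincide as sets; and Corollary~\ref{cor:type ideal} needs literal equality of ideals to transfer the truth value of $\psi$. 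Relatedly, the hypothesis of Proposition~\ref{prop:EQ} must be verified for \emph{pairs of models} $\ca M,\ca N$ of the chosen theory, not just for two tuples inside the fixed $\ca K$, and you do not address this.

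This is exactly the place where the paper brings in extra machinery. It first replaces the parameter $f$ by $\hn$-parameters via the equivalence $\psi(\bar x,f)\Longleftrightarrow \exists g\,(\ideal I_g=\ideal I_f\wedge\psi(\bar x,g))$ (a direct consequence of Corollary~\ref{cor:type ideal}), and then, to check the two-model hypothesis of Proposition~\ref{prop:EQ}, passes (under CH) to saturated models so that $(\hn(\ca A),+,\cdot)\cong(\hn(\ca B),+,\cdot)$, and applies Lemma~\ref{lem:main} to see that this arithmetic isomorphism is elementary for the full structure; Shoenfield absoluteness then removes CH. Your argument would need either this saturation/Lemma~\ref{lem:main} step, or some other device that turns ``same arithmetic type over $\bar p$'' into ``same truth value of $\psi(\cdot,f)$'' across models—mere uniform definability of $\bar a\mapsto \ideal I_{\bar a*f}$ over $\bar p$ does not suffice.
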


\begin{proof} We first prove the statement assuming GCH and then we show how to relax this hypothesis. As above, the use of GCH could also be replaced by a Keisler--Shelah argument. 

Let $X\subseteq \hn^n$ be definable in $\ca K$ by a formula $\varphi(x, c)$, where $c$ is a tuple of parameters from $K\cup \F(K)$. Since every hyperfinite set can be enumerated by a hyperfinite sequence (Proposition \ref{prop:enumeration}), there are $m\in \hn$ and $f\in K^{(m)}$ such that $c$ is definable from $f$ (it is easy to see that a single $f$ suffices). We may then assume that $X$ is definable by a formula $\varphi(x, f)$ with parameter $f\in K^{(m)}$. Let $\ideal I_f \subseteq \poly m$ be the definable ideal of $f$. Then $\ideal I_f$ is definable in $(\hQ, \hn, +,\cdot)^{eq}$ (Corollary \ref{cor:ideal}). 
By Corollary \ref{cor:type ideal} the ideal $\ideal I = \ideal I_f$ determines the type of $f$ over $\hn$. So for all tuples $x$ from $\hn$, $\varphi (x,f)$ is equivalent to the formula $\exists g \; (\ideal I_g = \ideal I \land \varphi(x,g))$. This formula can be taken to have only parameters from $\hn$ (those needed to define $\ideal I$), since definable finite power sets are coded in $(\hQ,\hn,+,\cdot)^{eq}$ by elements of $\hn$ (see the proof of Lemmas \ref{lem:robust-pow-set-of-N} and \ref{lem:robust-hyperfinite-in-Q}). 

It remains to show that every subset $X \subseteq \hn^n$ definable in $\ca K$ by a formula $\varphi(x)$ with parameters from $\hn$, is already definable in the structure $(\hn, +, \cdot)$. To this aim let $\Gamma$ be the family of all formulas in the language $\{+,\cdot, c\}_{c \in \hn(\ca K)}$ where all quantifiers are relativized to the $0$-definable predicate $\hn$, all free variables are constrained in $\hn$ and all constants are from $\hn (\ca K)$. We need to prove that $X$ is $\Gamma$-definable. 
Consider two elementary extensions $\ca A, \ca B$ of $\ca K$,  and elements $a_1, \ldots, a_n \in \hn(\ca A)$, $b_1, \ldots, b_n\in \hn(\ca B)$, with  
$$\ca A, a_1, \ldots, a_n \equiv_{\Gamma} \ca B, b_1, \ldots, b_n.$$
By Proposition \ref{prop:EQ} it suffices to show that 
$$\ca A, a_1, \ldots, a_n \equiv_{\varphi(x)} \ca B, b_1, \ldots, b_n$$
By GCH we may assume that $\ca A$ and $\ca B$ are saturated of the same cardinality, hence there is an isomorphism
 $$\Psi:(\hn(\ca A),+,\cdot, a_1,\ldots, a_n) \cong (\hn(\ca B),+,\cdot, b_1, \ldots, b_n)$$
 and we may assume that $\Psi$ is the identity on the parameters from $\hn (\ca K)$ in the formula $\varphi(x)$. 
 By Lemma \ref{lem:main} $\Psi:\ca A \pto \ca B$ is an elementary map, so it preserves $\varphi(x)$, concluding the proof. 
 
 To show that that GCH is not needed, we first observe that the statement of the theorem can be rephrased as follows. For all formulas $\varphi (x, y)$ in the language of $T_{\text{rec}}$ (where $x,y$ are tuples of variables) and for all complete extensions $T_y$ of $T_{\text{rec}}$ in the language $L\cup \{y\}$, there is a formula $\gamma (x, z)$ of $\Gamma$ such that $T_y$ proves $\exists z \in \hn \; (\forall x \in \hn \; (\varphi(x,y) \iff \gamma(x,z))$. 
 Under this rephrasing the statement has complexity (at most) $\Sigma^1_2$, so  Shoenfield's absoluteness lemma applies. 
 \end{proof}

\section{The case of transcendence degree zero}\label{sec:Qbar}

The theory of $(\overline \Q, \Fin(\overline \Q))$ differs from the theory of $(\C, \Fin(\C))$ because $(\overline \Q, \Fin(\overline \Q))$ does not satisfy axiom (5) in Definition \ref{defn:complete}. However we can adapt the results of Section \ref{sec:wso-complex} to study the theory of $(\overline \Q, \Fin(\overline \Q))$. 

\begin{defn}
	Let $T_{\text{rec},0}$ be defined as $T_{\text{rec}}$ but with point (5) in Definition \ref{defn:complete} replaced by an axiom saying that every element of $K$ is definably algebraic over $\hQ$.
\end{defn}

\begin{prop} \label{prop:Qbar} \mbox{}
	\begin{enumerate}
		\item $T_{\text{rec},0}\cup T_\N$ is the complete theory of $(\overline \Q, \Fin(\overline \Q))$, 
		\item The complete theory of a model $\ca K$ of $T_{\text{rec},0}\cup T_\N$ is determined by the complete theory of $\hn(\ca K)$. 
		\item if $\ca K$ is a model of $T_{\text{rec},0}$ and $\hn = \hn(\ca K)$, every definable subset of $\hn^n$ with parameters from $\ca K$ is already definable in $(\hn, +, \cdot)$. 
\end{enumerate}	 
\end{prop}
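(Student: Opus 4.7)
The plan is to parallel Section~\ref{sec:wso-complex} with one crucial adaptation: since axiom (5) of $T_{\text{rec}}$ is replaced by definable algebraicity over $\hQ$, Proposition~\ref{prop:generic}(2) cannot be invoked in full generality (Case~2 of its proof needs hyper-infinite transcendence degree), but its Case~1 alone suffices for the back and forth argument in this setting. Once an analogue of Lemma~\ref{lem:main} is established, (1) and (2) follow exactly as in Theorems~\ref{thm:completeness} and~\ref{thm:completeness-mod-peano}: assuming CH, pass to saturated models of the same cardinality, note that saturated models of the complete arithmetic theory of $\hn(\ca K)$ are isomorphic as structures in $\{+,\cdot\}$, apply the adapted lemma, and remove CH via Shoenfield's absoluteness theorem as the statements are $\Sigma^1_2$. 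Statement (3) follows by the two-step argument of Theorem~\ref{thm:stable-emebeddedness}: first, via the (formal) analogue of Corollary~\ref{cor:type ideal}, parameters from $\ca K$ can be replaced by parameters from $\hn$; second, via Proposition~\ref{prop:EQ} together with the adapted Lemma~\ref{lem:main}, the resulting formula is equivalent to a pure arithmetic formula.

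To adapt Lemma~\ref{lem:main}, I would take two models $\ca A, \ca B$ of $T_{\text{rec},0}$ with $\Psi:\hn(\ca A)\cong\hn(\ca B)$ identified, and form the same family $\ca G_0$ of partial maps $\iota:\bigcup_{n\in\hn}A^{(n)}\pto\bigcup_{n\in\hn}B^{(n)}$ as in the original proof. The only step requiring new work is the extension step of the back and forth property: given $\iota\in\ca G_0$ with $\dom(\iota)=\{f_1,\ldots,f_k\}$ and image $\{g_1,\ldots,g_k\}$, and a new $f$ from $\ca A$, one must realize the definable ideal $\ideal I$ of $f_1*\cdots*f_k*f$ by an extension of $g_1,\ldots,g_k$ in $\ca B$. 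Proceeding one coordinate at a time as in the proof of Proposition~\ref{prop:generic}(2), at each step the new coordinate of the tuple in $\ca A$ is definably algebraic over $\hQ$ adjoined with the previous coordinates, and this produces a polynomial $p\in\ideal I_{r+1}$ with $p(\bar\alpha,{\tt y})$ nonzero as a polynomial in ${\tt y}$. Hence Case~1 of the proof applies, and it uses only definable algebraic closure (axiom (4) of $T_{\text{rec},0}$) to produce a root $\beta\in B$. Since the condition distinguishing Case~1 from Case~2 depends only on the ideals $\ideal I_r, \ideal I_{r+1}$, which are the same in $\ca A$ and $\ca B$, the construction succeeds on both sides, giving the back-and-forth property; the remainder of the proof of Lemma~\ref{lem:main} then carries over verbatim.

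The main obstacle is to verify cleanly that Case~2 of Proposition~\ref{prop:generic}(2) is never needed on either side. The verification is: if $\bar\alpha\beta$ is the tuple in $\ca A$ whose ideal is being extended, then by axiom (5) of $T_{\text{rec},0}$ there exists a nonzero polynomial $q({\tt y})\in\hQ[\bar\alpha]^{\text{def}}[{\tt y}]$ with $q(\beta)=0$; clearing denominators yields $p(\bar{\tt x},{\tt y})\in\polyn{\hQ}{r+1}$ with $p(\bar\alpha,{\tt y})$ nonzero in ${\tt y}$ and $p(\bar\alpha,\beta)=0$, placing $p$ in $\ideal I_{r+1}$ and witnessing Case~1. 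Everything else, including the elementarity argument of Corollary~\ref{cor:iota elementary} and the ideal-determines-type statement of Corollary~\ref{cor:type ideal}, is formal and transfers to the $T_{\text{rec},0}$ setting without change, so the three claims of the proposition follow.
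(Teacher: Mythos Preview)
Your proposal is correct and follows essentially the same route as the paper. The paper's proof is terse: it states the needed variant of Proposition~\ref{prop:generic} (``if $\ideal I = \ideal I_f$ for some $f\in K^{(n)}$, then every zero of $\ideal I$ is generic''), notes that the argument is similar to Case~1 of that proof, and then says the rest carries over verbatim from Lemma~\ref{lem:main} and Theorems~\ref{thm:completeness}, \ref{thm:completeness-mod-peano}, \ref{thm:stable-emebeddedness}. Your write-up is a more detailed unpacking of exactly this, including the observation that the Case~1/Case~2 dichotomy depends only on the ideals $\ideal I_r, \ideal I_{r+1}$ and hence transfers between $\ca A$ and $\ca B$.
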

\begin{proof}
We need a variant of Proposition \ref{prop:generic} stating that if $\ideal I$ is the definable ideal $\ideal I_f$ of some $f\in K^{(n)}$, then every zero $g\in K^{(n)}$ of $\ideal I$ is generic, namely we have $\ideal I_g = \ideal I$.  The argument is similar to Case 1 of the proof of Proposition \ref{prop:generic}. Granted this, the proof of Lemma \ref{lem:main} goes through replacing the application of \ref{prop:generic} by this variant. The analogues of Theorems \ref{thm:completeness}, \ref{thm:completeness-mod-peano}, \ref{thm:stable-emebeddedness} for $T_{\text{rec},0}$ follow by the same proofs. 
\end{proof}	

We expect that in a similar way we can analyze the theory of $(K, \Fin(K))$ where $K$ is an algebraically closed extension of $\Q$ of finite transcendence degree. 

\section{A poset of algebraic curves and points}\label{sec:poset}

In this section we prove that the incidence relation between irreducible complex projective curves and their points interprets $(\C, \Fin(\C))$. 

\begin{defn}
	Let $K$ be a field and let  $\Var K$ be the poset of irreducible Zariski closed proper subsets of $\mathbb{P}^2(K)$ ordered by inclusion. We can write 
	$$\Var K = \Points K \cup \Curves K$$ where $\Points K$ is the set of all (irreducible) varieties of dimension $0$, which we can identify with the points of $\mathbb P^2(K)$, and $\Curves K$ are the varieties of dimension $1$. Since we only consider irreducible curves, there are no inclusions between distinct curves, so the order of the poset is the membership relation between points and curves. 

 We should distinguish a curve $C\in \Curves K$ from its set of points 
 $$\pt C \subseteq \Points K,$$
  but when there is no risk of confusion, we will omit the superscript and identify \( C \) with \( \pt C \), allowing the context to clarify whether we mean a subset of $\Points K$  or an element of $\Curves K$. 
 
 Given $C,D\in \Curves K$, let  $C\cap D\subseteq \Points K$ be the set of points in both $C$ and $D$. 
	
	  We write $\nCurves K n\subseteq \Curves K$  for the set of curves of degree $n$ and 
$$\nVar K n = \Points K \cup \nCurves K n.$$
Similarly we define $\nVar K {\leq n} = \Points K \cup \nCurves K {\leq n}$ by considering the curves of degree $\leq n$. 
\end{defn}

We also need to consider the poset of affine irreducible varieties obtained by removing a line at infinity and its points, as in the following definition. 

\begin{defn}\label{defn:affine}  Fix a line $L_\infty\in \nCurves K 1$ to play the role of the line at infinity and define: 
\begin{itemize}
\item $\affPoints K = \Points K \setminus L_\infty^{\text{points}}$.
\item $\affCurves K = \Curves K \setminus \{L_\infty\}$.
\item $\naffCurves K n = \nCurves K n \setminus \{L_\infty\}$.
\item $\affVar K = \affPoints K \cup \affCurves K \; \subset\; \Var K$. 
\item $\naffVar K n = \affPoints K \cup \naffCurves K n \; \subset\; \nVar K n$. 
\item $\naffVar K {\leq n} = \affPoints K \cup \naffCurves K {\leq n} \; \subset\; \nVar K {\leq n}$. 
\end{itemize}
Note that $\naffVar K 1 = \naffVar K {\leq 1}$. 
\end{defn}

\begin{rem} \mbox{} \label{rem:affine}
With these definitions $\affVar K$ is a substructure of the poset $\Var K$ isomorphic to the poset of all affine proper irreducible varieties $C\subset K^2$ (points and curves) ordered by inclusion, where we identify a point with its singleton set.
A possible source of confusion is that an affine curve is also a projective curve, 
namely $\affCurves K \subset \Curves K$, however the set of points $\pt C$ of a curve $C$ is  different when computed in the poset $\affVar K$ or in the larger poset $\Var K$. In the former case the points at infinity are missing, although they are determined uniquely by the affine points of the given curve. 

Let us also observe that $\naffVar K {1}$ is isomorphic to 
		the poset, definable in $K^{eq}$, consisting of all affine lines $L\subset K^2$ and points $P\in K^2$ ordered by inclusion. 
\end{rem}

\section{A bi-interpretability result}\label{sec:biinter}

The bi-interpretability of the field $K$ with the poset $\naffVar K 1$ essentially boils down to recovering the field structure on a line. This is well-known, see for example \cite[Theorem 7.12]{Har}; we give a proof to fix some notation and to describe the ideas which we will use later.
 
\begin{prop}\label{prop:field-interpretation}
	For every field $K$, the poset $\naffVar K 1$ of all affine lines and points is bi-interpretable with the field $K$ (after fixing some parameters). More precisely, given any affine line $L\in \naffCurves K 1$ and two points ${\bf 0}, {\bf 1}\in L^{\text{points}}$, we have: 
	
	\begin{enumerate}
	\item There are a field structure on $L^{\text{points}}$, with $\bf 0, \bf 1$ as the additive and multiplicative identities, and a field isomorphism $K\cong L^{\text{points}}$. 
	\item 
	The isomorphism $K\cong L^{\text{points}}$ is definable in $K^{eq}$ if we view $\naffVar K 1$ as a definable structure in $K^{eq}$ as in Remark \ref{rem:affine}. 
	\item 
	We can introduce coordinates in $\naffVar K 1$ in the following sense: there is a bijection $f: \affPoints K \to \pt L \times \pt L$ which induces an isomorphism $\naffVar K 1 \cong \naffVar {\pt L} 1$ definable in ${\naffVar K 1}^{eq}$. 
\end{enumerate}
\end{prop}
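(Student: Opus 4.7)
The plan is to recover the field $K$ from the incidence and parallelism data of $\naffVar K 1$ via Hilbert's segment calculus \cite{Hilbert}. The key preliminary observation is that parallelism is definable in the pure poset $\naffVar K 1$: two distinct lines are parallel iff they share no common point of $\affPoints K$ (in projective language, they meet at a point of the removed $L_\infty$). Hence through every point not on a given line there is a unique parallel, and parallelogram and intercept constructions make sense purely inside $\naffVar K 1$.

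For Part (1), fix auxiliary data: a line $M$ through $\mathbf{0}$ with $M \neq L$, and a point $\mathbf{1}_M \in \pt M \setminus \{\mathbf{0}\}$. Let $\pi_M : \pt L \to \pt M$ be the bijection fixing $\mathbf{0}$, sending $\mathbf{1}$ to $\mathbf{1}_M$, and otherwise defined by parallel projection along $\mathbf{1}\mathbf{1}_M$. Define $P + Q$ on $\pt L$ by the translation/parallelogram construction (the image of $Q$ under the unique affine translation sending $\mathbf{0}$ to $P$, realised through two parallelism steps using $M$), and $P \cdot Q$ by the intercept construction (project $P$ to $\pi_M(P)$, draw the parallel to $\mathbf{1}_M Q$ through $\pi_M(P)$, intersect with $L$). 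That $(\pt L, +, \cdot, \mathbf{0}, \mathbf{1})$ is a field isomorphic to $K$, and that the construction is independent of the choices of $M$ and $\mathbf{1}_M$, reduces to the theorems of Desargues and Pappus, which hold in the affine plane over any field; this is classical, see \cite[Ch.~I]{Har}. The isomorphism $K \to \pt L$ is the map sending $t \in K$ to the point of parameter $t$ under the unique affine parametrization $K \to \pt L$ with $0 \mapsto \mathbf{0}$, $1 \mapsto \mathbf{1}$.

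For Part (2), the structure $\naffVar K 1$ is definable in $K^{eq}$ as noted in Remark \ref{rem:affine}: lines correspond to equivalence classes of coefficient triples $(a,b,c)$ with $(a,b) \neq (0,0)$, modulo scaling, and incidence is given by a polynomial equation. Each primitive used in the constructions of Part (1) (the line through two distinct points, the intersection of two non-parallel lines, the test for parallelism) is manifestly definable in $K^{eq}$, whence so are the field operations on $\pt L$ and the isomorphism $K \cong \pt L$.

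For Part (3), keep the auxiliary data $(M, \mathbf{1}_M)$ above, and let $\psi : \pt M \to \pt L$ be the field isomorphism obtained by composing the two isomorphisms of Part (1) through $K$; equivalently, $\psi = \pi_M^{-1}$. For $P \in \affPoints K$ let $x(P) \in \pt L$ be the intersection of $L$ with the line through $P$ parallel to $M$, and $y(P) \in \pt M$ the intersection of $M$ with the line through $P$ parallel to $L$. Define $f(P) = (x(P), \psi(y(P)))$; this is a bijection $\affPoints K \to \pt L \times \pt L$. Classical affine geometry shows that the image of any affine line is cut out by a linear equation in $(x, y)$, so $f$ extends to a poset isomorphism $\naffVar K 1 \cong \naffVar{\pt L}{1}$, definable in ${\naffVar K 1}^{eq}$ by Part (2). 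The main conceptual step is thus the very first one: recognising that parallelism, and hence Hilbert's segment calculus, is already expressible in the pure poset $\naffVar K 1$; once this is in hand, the verification of the field axioms and the definability claims are classical.
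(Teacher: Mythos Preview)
Your proof is correct and follows essentially the same approach as the paper: both recognise that parallelism is definable in the poset as non-intersection, then use the classical Hilbert/Hartshorne segment calculus (parallelogram for addition, intercept for multiplication) to recover the field, and finally set up coordinates via parallel projections onto two axes through $\mathbf{0}$. The only cosmetic difference is in the bookkeeping for multiplication: the paper does not fix a distinguished unit $\mathbf{1}_M$ on the auxiliary line but instead uses two auxiliary points $a,b$ on it related by a parallelism condition, whereas you fix $\mathbf{1}_M$ and use $\pi_M$; these are equivalent formulations of the same intercept construction.
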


\begin{proof} 
	The interpretation of $\naffVar K 1$ in $K$ follows from Remark \ref{rem:affine}. 
	
	To interpret $K$ in $\naffVar K 1$ we proceed as follows.
	Given two distinct points $a,b\in \affPoints K$ let $L(a,b)$ be the unique line joining $a$ and $b$. Two distinct lines are parallel if they do not intersect. If $L(a,b)$ is parallel to $L(c,d)$ and $L(a,c)$ is parallel to $L(b,d)$ we write $[a,b]\sim [c,d]$ (intuitively, $[a,b]$ and $[c,d]$ are opposite sides of a parallelogram). We say that $[a,b]$ is congruent to $[e,f]$ if either $a=b$ and $e=f$ or there are $c,d$ with $[a,b]\sim [c,d] \sim [e,f]$ (intuitively, $[e,f]$ is a translation of $[a,b]$). 
	
	Given $x,y\in L$ we define $x+y$ as the unique point of $L$ such that $[\mathbf 0,x]$ is congruent to $[y,x+y]$. Now fix a line $L' \neq L$  which intersects $L$ at $\mathbf 0$.
	To define the product $xy$, choose $a,b\in L'$ so that $L(a,\mathbf 1)$ and $L(b,x)$ are parallel. Now define $xy\in L$ as the unique point such that $L(a,y)$ and $L(b,xy)$ are parallel.  
	
	It is well-known that with the above definition we obtain a field with domain $L$ (or more precisely $L^{\text{points}}$) isomorphic to $K$. This proves point (1). 
	
	So far we have shown that $K$ and $\naffVar K 1$ are mutually interpretable, but it remains to show that they are bi-interpretable, namely that the induced self-interpretations are definable. The definability of the self-interpretation of $K$ in itself is given by point (2) and is easy. The definability of the self-interpretation of $\naffVar K 1$ in itself follows from point (3) which is proved as follows. The idea is to use $L$ as the $x$-axis and $L'$ as the $y$-axis after fixing a definable bijection $\phi:L'\to L$ using a family of parallel lines. Given $p\in \affPoints K$ let $(x,z) \in L\times L'$ be such that $[0,x]\sim [z,p]$ and assign to $p$ the coordinates $(x, \phi(z)) \in L\times L$. This defines a bijection $f:\affPoints K \to \pt L \times \pt L$ and it is easy to check that $f$ induces the required isomorphism.
\end{proof}

\begin{thm}\label{thm:curves-field} Let $K$ be a field and let $n$ be a positive integer. Then the poset $\nVar K {\leq n}$ is bi-interpretable with $K$ (after fixing some parameters, see Remark \ref{rem:four points}). 
\end{thm}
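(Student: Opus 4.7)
The plan is to treat both directions of the bi-interpretability and then verify that the compositions are definable.

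For the interpretation of $\nVar K {\leq n}$ in $K$: each irreducible projective curve of degree $d \leq n$ is determined, up to a nonzero scalar, by an irreducible homogeneous polynomial in $K[x_0,x_1,x_2]$ of degree $d$; the irreducibility of a polynomial is first-order in its coefficients, and the incidence of a point with a curve is polynomial evaluation. Therefore $\nCurves K {\leq n}$ and $\Points K$ are definable quotients in $K^{eq}$, and the whole poset $\nVar K {\leq n}$ is interpretable in $K$.

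For the converse direction, the plan is to reduce to the case $n=1$ by isolating the sub-poset $\nVar K {\leq 1}$ inside $\nVar K {\leq n}$ and then invoking Proposition~\ref{prop:field-interpretation} after fixing a projective frame (the parameters of Remark~\ref{rem:four points}). Points are the minimal elements of the poset. To pick out the lines among the curves I will use Bezout's theorem: I claim that for $n \geq 2$, a curve $L \in \Curves K$ is a line iff for every $D \in \Curves K$ with $D \neq L$ the intersection $\pt L \cap \pt D \subseteq \Points K$ has cardinality at most $n$. If $L$ is a line and $D$ has degree $e \leq n$ then $|L \cap D| \leq e \leq n$; conversely, if $C$ has degree $d \geq 2$ a suitably generic $D$ of degree $n$ meets $C$ transversally in $dn > n$ distinct points. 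Since $n$ is a fixed standard integer, ``$|L \cap D| \leq n$'' is a first-order statement in the poset.

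To verify bi-interpretability, compose the two interpretations. The self-interpretation of $K$ in $K$ passes through the field structure reconstructed on a distinguished line of the interpreted $\nVar K {\leq n}$, and is definable by Proposition~\ref{prop:field-interpretation}(2). The self-interpretation of $\nVar K {\leq n}$ is also definable: a curve $C$ of degree $d \leq n$ is determined by the incidences it makes with finitely many points (a general $\binom{d+2}{2}$-tuple of points suffices to pin down its defining polynomial up to a scalar), and these points carry coordinates in the reconstructed field by Proposition~\ref{prop:field-interpretation}(3); the map back to the original poset is therefore definable via evaluation of the recovered coefficient tuple.

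The main obstacle is the characterization of lines in $\nVar K {\leq n}$: one must show that for every curve of degree $\geq 2$ there exists some curve of degree $\leq n$ meeting it in strictly more than $n$ points. Over algebraically closed fields this is standard, as generic pairs of curves attain the Bezout bound; over arbitrary fields one argues by enlarging to the algebraic closure and exploiting that the intersection cardinality of two curves defined over $K$ is computed from their defining polynomials (hence is absolute), or by directly exhibiting curves of degree $n$ with the required intersection pattern, using that the space of such curves is sufficiently large.
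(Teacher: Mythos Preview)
Your outline matches the paper's proof: the same intersection-bound criterion isolates the lines (the paper's step (i) is exactly your Bezout characterization), then Proposition~\ref{prop:field-interpretation} recovers the field on a fixed affine line after removing a line at infinity, and the poset is interpreted back in $K$ via bounded-degree homogeneous polynomials and evaluation. For the self-interpretation of the poset the paper is a touch more direct than your sketch: rather than reconstructing each curve's equation from a determining tuple of incident points, it simply extends the affine coordinate map of Proposition~\ref{prop:field-interpretation}(3) to a definable bijection $\Points K \to \mathbb P^2(L)$, and the poset isomorphism $\nVar K {\leq n} \cong \nVar L {\leq n}$ is then induced by transport of structure. (If you stick with your route, note that the degree $d$ of a given curve is not a priori available in the poset language, so you should work with a fixed number of points, e.g.\ $\binom{n+2}{2}$, rather than $\binom{d+2}{2}$.)

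One genuine error in your treatment of what you flag as the main obstacle: the claim that intersection cardinality ``is computed from their defining polynomials (hence is absolute)'' is not correct --- the number of $K$-rational intersection points can strictly increase on passing to $\bar K$, so the generic Bezout count over $\bar K$ does not descend to $K$. The paper does not justify its step (i) either; it simply asserts the criterion. Your alternative --- directly exhibiting an irreducible degree-$n$ curve through more than $n$ chosen $K$-points of $C$ --- is the workable route, and it goes through once $C$ carries sufficiently many $K$-points, which is automatic in the algebraically closed setting the paper actually needs.
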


\begin{proof}
The interpretation of $K$ in  $\nVar K {\leq n}$ is obtained through the following steps. 
\begin{itemize}
\item[(i)] $\nVar K {\leq 1}$  is definable in $\nVar{K}{\leq n}$ because an irreducible curve has degree at most $1$ if and only if it intersects every irreducible curve of degree at most $n$ in at most $n$ points. 
\item[(ii)] $\naffVar K 1$ can be identified with the definable substructure of $\nVar K {\leq 1}$ obtained by removing a projective line $S_\infty$ and its points. 
\item[(iii)] $\naffVar K 1$ defines a field $L$ isomorphic to $K$  whose domain is the set of points of any given affine line (Proposition \ref{prop:field-interpretation}(1)).
\end{itemize}

To interpret the poset $\nVar K {\leq n}$ in $K$ we proceed as follows. Let $m$ be the cardinality of the set of monomials in three variables of degree $\leq n$. We can naturally identify $K^m$ with the set $K[\tt{x_0, x_1, x_2}]^{\leq n}$ of polynomials of degree $\leq n$. 
In $K^{eq}$ we can define an evaluation function $\ev_n: K[\tt{x_0, x_1, x_2}]^{\leq n}$ $\times K^{3}\to K$ sending $(p, (a,b,c))$ to $p(a,b,c)$. The subset consisting of the irreducible homogeneous polynomials is clearly definable. Each irreducible  homogeneous polynomial $p\in K[{\tt x_0, x_1, x_2}]^{\leq n}$ defines an irreducible projective algebraic set $V(p) = \{[a,b,c]\in \mathbb P^2(K)  \mid f(a,b,c) = 0\}$ and two homogeneous polynomials define the same set if they have the same zeros. This gives an interpretation of $\nVar K {\leq n}$ in $K^{eq}$.

We have thus shown that $K$ and $\nVar K {\leq n}$ are mutually interpretable. The proof that they are bi-interpretable is similar to that of Proposition \ref{prop:field-interpretation}. The main difference is that we have to use homogeneous rather than affine coordinates to deal with the self-interpretation of $\nVar K {\leq n}$ in itself. Recall that 
$\nVar K {\leq n} = \Points K \cup \nCurves K {\leq n}$ and $\naffVar K 1$ is definable in $\nVar K {\leq n}$.  Now let $L$ be the field introduced in point (iii). By Proposition \ref{prop:field-interpretation}(3) we have a definable bijection $\phi$ from $L\times L$ to $\affPoints K = \Points K \setminus S_\infty^{\text{points}}$. We can naturally embed $L\times L$ in $\mathbb P^2 (L)$ and extend $\phi$ to a definable bijection from $\Points K$ to $\mathbb P^2(L)$ which preserves collinearity. 
This induces an isomorphism from $\nVar K {\leq n}$ to $\nVar L {\leq n}$ which is definable in $\nVar K {\leq n}^{eq}$. 
\end{proof}

The following proposition shows that $\nVar K {\leq 2}$ is definable in $\Var K$. 

\begin{prop}{\cite[Proposition 2.6]{MD}} \label{prop:2curves}
Let $K$ be an algebraically closed field of characteristic zero. Let $C \in \Curves{K}$ be an irreducible plane algebraic curve. Then $C$ has degree at most $2$ if and only if for every point $P\in C$ 
there is $D\in \Var{K}$ such that $C \cap D = \{P\}$.
\end{prop}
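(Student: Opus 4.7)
The easy direction is that $\deg C \leq 2$ implies the existence of such a $D$ for every $P$. I would dispatch this by hand: if $C$ is a line, take $D$ to be any other projective line through $P$; if $C$ is an irreducible conic (hence smooth in characteristic zero, since a singular irreducible conic would be a double line), take $D$ to be the tangent line $T_P$. In both cases Bezout forces $C\cap D = \{P\}$, because the total intersection number $\deg C \cdot \deg D$ is concentrated at $P$ (with multiplicity $2$ in the conic case, by definition of the tangent line).

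For the converse, the plan is to translate the hypothesis into a divisor-class condition on the normalization $\nu:\tilde C\to C$. Given a curve $D$ of degree $e$ with $C\cap D=\{P\}$, Bezout and the identification of intersections with sections of $\mathcal O_{\mathbb P^2}(e)$ produce an effective divisor on $\tilde C$ of degree $de$, supported on $\nu^{-1}(P)$ and linearly equivalent to $eH$, where $H = \nu^*(C\cap\ell)$ is the pullback of a generic hyperplane section. Letting $d = \deg C$, the consequence I would extract is that for every $P\in C$ the class $d\cdot P - H$ becomes torsion in the (possibly generalized) Jacobian of $C$; doing this uniformly in $P$ yields a morphism $\phi:C\to\Jac(C)$, defined by $P\mapsto dP - H$, whose image consists entirely of torsion $K$-points.

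The heart of the argument is then to show that such a $\phi$ cannot exist once $d\geq 3$. In the smooth case this is where I would invoke Raynaud's theorem (Manin--Mumford): an irreducible subvariety of an abelian variety all of whose $K$-points are torsion must be a torsion translate of an abelian subvariety. Since $\phi = [d]\circ \operatorname{AJ} + \mathrm{const}$, this forces the image of Abel--Jacobi to sit inside a translate of a subabelian variety of dimension $\leq 1$; because $\operatorname{AJ}(C)$ generates $\Jac(C)$, this collapses $g(C)$ to at most $1$. The genus formula $g = \binom{d-1}{2}$ for smooth plane curves then gives $d \leq 3$, and the residual case $d = 3$, $g = 1$ is ruled out directly: on a smooth plane cubic, taking a flex $O$ as origin gives $H \sim 3O$, so the hypothesis reduces to every $K$-point of $C$ being torsion in the group law $(C, O)$, which fails over any algebraically closed field of characteristic zero (already over $\overline{\mathbb Q}$, as the Mordell--Weil group has infinite rank).

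The step I expect to be the main obstacle is handling singular $C$: here $\Jac(\tilde C)$ must be replaced by the generalized Jacobian of $C$, an extension of $\Jac(\tilde C)$ by an affine commutative group, and one must keep careful track of how the degree-$de$ divisor from Bezout is distributed among the branches of $C$ at $P$. The Manin--Mumford input also needs mild adaptation to this semi-abelian setting. The excerpt signals exactly this point — that Pardini's suggestion provides the smooth argument, and the appendix extends it via generalized Jacobians — so I anticipate that the singular bookkeeping, rather than the conceptual core, is what occupies most of the work.
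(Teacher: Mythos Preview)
Your outline is correct, but you are reaching for a much heavier hammer than the paper does. The paper never invokes Manin--Mumford. In the smooth case it simply pairs up two points: given $P_1,P_2\in C$ with witnessing curves $F_1,F_2$ of degrees $d_1,d_2$, the rational function $F_1^{d_2}/F_2^{d_1}$ has divisor $\deg(C)\,d_1d_2\,(P_1-P_2)$, so $P_1-P_2$ is torsion in $\Jac(C)$. Since such differences generate $\Div^0(C)$, the entire Jacobian is torsion, hence trivial (an abelian variety over an algebraically closed field of characteristic zero with only torsion $K$-points has dimension zero), so $g=0$ and $d\le 2$ directly. Your own setup already contains this: from ``$dP-H$ is torsion for all $P$'' you get $d(P-Q)$ torsion for all $P,Q$, hence $P-Q$ torsion, and you are done without Raynaud and without the separate elimination of $d=3$.

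For the singular case the paper again avoids deep input. It fixes a singular point $P_0$, builds a modulus $\mathfrak m$ of degree $\ge 2$ supported on the fibre of the normalization over $P_0$, and repeats the pairing trick for points away from the singular fibres to show the generalized Jacobian $J_{\mathfrak m}(\tilde C)$ has finite rank; this contradicts an explicit structure computation (from Serre) showing $J_{\mathfrak m}$ contains a copy of $\mathbb G_m$ or $\mathbb G_a$ and hence has infinite rank. So the paper proves ``prefactorial $\Rightarrow$ smooth'' outright, then applies the smooth case. Your proposed route through semi-abelian Manin--Mumford would work but imports substantially more than is needed.
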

\begin{proof} See the appendix. \end{proof}

\begin{thm} \label{thm:biinterprable}
Let $K$ be an algebraically closed field of characteristic $0$. Then $(K,\Fin(K))$ and $\Var{K}$ are bi-interpretable (with parameters, see Remark \ref{rem:four points}). 
\end{thm}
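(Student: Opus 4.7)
The plan is to produce interpretations in both directions and then check that the two composed self-interpretations are definable.

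\emph{Interpreting $(K,\Fin(K))$ in $\Var K$:} by Proposition \ref{prop:2curves} the set $\nVar K {\leq 2}$ is a definable substructure of $\Var K$, and Theorem \ref{thm:curves-field} (with parameters fixing a projective frame, cf.\ Remark \ref{rem:four points}) then produces a definable copy of $K$ inside $\Var K$, realised as $L^{\text{points}}$ for some fixed affine line $L$. I would interpret the sort $\Fin(K)$ by coding a finite $S\subseteq L^{\text{points}}$ as an equivalence class of irreducible curves $C\in\Curves K$ satisfying $\{P\in\Points K : P\leq L \wedge P\leq C\}=S$; two curves are equivalent if they have the same intersection with $L$, and both the coded set and this equivalence are $\Var K$-definable. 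The empty set is added as a formal element, since every curve in $\mathbb P^2$ meets $L$.

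\emph{Interpreting $\Var K$ in $(K,\Fin(K))$:} a projective point is a scaling class of $K^3\setminus\{0\}$, and an irreducible projective plane curve is a scaling class of a nonzero irreducible homogeneous polynomial of some degree $d$, i.e.\ finitely many monomial coefficients together with $d\in\N$. Since $(K,\Fin(K))$ interprets $(K,\Fin(K^n))$ uniformly in $n$ (by Lemma \ref{lem:sets-of-pairs} combined with Proposition \ref{prop:monadic}(2)), and irreducibility is first-order expressible (no nontrivial factorisation of lower degree), the poset $\Var K$ with its incidence relation (polynomial evaluation) is definable in $(K,\Fin(K))$.

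For bi-interpretability, the round-trip $\Var K \to (K,\Fin(K)) \to \Var K$ is handled by the bi-interpretability half of Theorem \ref{thm:curves-field}, which supplies the $\Var K^{eq}$-definable identification of $K$ with $L^{\text{points}}$; curves and points are then recovered from their polynomial data via the fixed frame. For the other round-trip $(K,\Fin(K)) \to \Var K \to (K,\Fin(K))$, Theorem \ref{thm:curves-field} again furnishes a definable isomorphism of $K$ with its reinterpretation, while the coding of $\Fin(K)$ via curves admits a definable inverse by polynomial interpolation. The main technical obstacle is precisely the surjectivity of this coding: for every finite $S\subset L^{\text{points}}$ with $|S|=n\geq 1$ one needs an irreducible plane curve $C$ of degree $n$ with $L\cap C=S$ as sets. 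I would establish this by a Bertini-type argument: the affine space of polynomials of degree $n$ whose restriction to $L$ has simple zeros exactly on $S$ is nonempty, and the irreducible locus inside it is a nonempty Zariski-open subset because $K$ is algebraically closed of characteristic zero.
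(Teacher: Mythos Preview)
Your proposal is correct and follows essentially the same route as the paper: Proposition \ref{prop:2curves} plus Theorem \ref{thm:curves-field} to interpret $K$ in $\Var K$, intersections with a fixed line $L$ to code $\Fin(K)$, and a definable evaluation map for homogeneous polynomials of arbitrary degree (which the paper builds as Definition \ref{defn:ev} via its recursion machinery) for the reverse direction and for the bi-interpretability check. The one place you work harder than necessary is the surjectivity of the curve-coding: no Bertini argument is needed, since for $S=\{a_1,\ldots,a_n\}$ on an affine line taken as the $x$-axis, the projective closure of the graph $y=\prod_{i}(x-a_i)$ is already an irreducible curve meeting $L$ set-theoretically exactly in $S$.
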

\begin{proof}
We show: 
\begin{itemize}
\item[(i)] $(K, \Fin(K))$ is interpretable in $\Var K$. 
\item[(ii)] $\Var K$ is interpretable in $(K, \Fin(K))$. 
\item[(iii)] Composing the interpretations we obtain a bi-interpretation between $(K, \Fin(K))$ and $\Var K$. 
\end{itemize}
 By Proposition \ref{prop:2curves} $\nVar K {\leq 2}$ is definable in $\Var K$.
 By Theorem \ref{thm:curves-field}, the field $K$ is bi-interpretable with $\nVar K {\leq 2}$, so in particular it is interpretable in $\Var K$. 
 We show that the above interpretation can be extended to an interpretation of $(K, \Fin(K))$ in $\Var K$. 
 Let $L\in \naffVar K 1$ be an affine line with a field structure on $L^{\text{points}}$ definable in $\Var K$ and isomorphic to $K$.  Any finite subset $X$ of $\pt L$ can be written in the form $C \cap L$ for some $C\in \Var K$ not containing $L$. 
In this situation we say that $C$ codes $X$. By definition $C$ and $C'$ code the same set if $C\cap L = C'\cap L$, so the equivalence of codes is definable in $\Var K$ and we have obtained the desired interpretation of $(K, \Fin(K))$. This proves (i). 

\smallskip 
To prove (ii) we need the fact that in $(K, \Fin(K))^{eq}$ we can define an evaluation function $\ev: K[\tt{x_0, x_1, x_2}]^{\text{def}}$ $\times K^{(3)}\to K$ for polynomials of arbitrary degree (Definition \ref{defn:ev}). Recall that by Remark \ref{rem:standard poly} we can identify $K[\tt{x_0, x_1, x_2}]^{\text{def}}$ with the ring of all standard polynomials in three variables. The subset consisting of the irreducible homogeneous polynomials is clearly definable. Each irreducible homogeneous polynomial $p\in K[{\tt x_0, x_1, x_2}]$ defines an irreducible projective algebraic set $V(p) = \{[a,b,c]\in \mathbb P^2(K)  \mid p(a,b,c) = 0\}$ and two polynomials define the same set if they have the same zeros. This gives an interpretation of the poset of projective irreducible algebraic sets in $(K, \Fin(K))^{eq}$. 

Let then $\textnormal{Poset}(\mathcal{K})$ denote the union of $\mathbb{P}^2(K)$ and the set of the equivalence classes of homogeneous polynomials which define irreducible algebraic sets, endowed with the poset structure induced by the membership of a point to a curve. This is isomorphic to $\Var{K}$ and gives the desired interpretation. 
\smallskip 

The proof of point (iii) is similar to the proof of Theorem \ref{thm:curves-field}. As in that theorem we obtain a bijection $\phi$ from $\Points K$ to $\mathbb{P}^2(L)$ definable in $\Var K$ which preserves collinearity. Since a curve is determined by its points, $\phi$ extends to a definable isomorphism between $\Var K$ and $\Var L$.
\end{proof}	

Inspecting the proof of Theorem \ref{thm:biinterprable} we have: 

\begin{cor}\label{cor:recover-equation}
	There is a function $F$, definable in $\Var{K}^{eq}$, such that for all $C \in \Curves{K}$ we have that $F(C)\in K[\tt x_0, \tt x_1, \tt x_2]$ is a homogeneous polynomial defining $C$.
\end{cor}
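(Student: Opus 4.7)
The plan is to extract $F$ directly from the bi-interpretation constructed in the proof of Theorem~\ref{thm:biinterprable}. In that proof, after interpreting $(K,\Fin(K))$ inside $\Var{K}^{eq}$, we realized $\Var{K}$ inside $(K,\Fin(K))^{eq}$ as the poset whose ``curves'' are equivalence classes of homogeneous polynomials in $K[{\tt x}_0,{\tt x}_1,{\tt x}_2]$ (two polynomials equivalent iff they have the same zero set, i.e.\ they are non-zero scalar multiples of each other, since the polynomials in question are irreducible up to a constant). The bi-interpretation property then guarantees that the induced self-interpretation of $\Var{K}$ in $\Var{K}^{eq}$ is definable; concretely, the assignment
\[
C \;\longmapsto\; [p]_C := \{\,p \in K[{\tt x}_0,{\tt x}_1,{\tt x}_2] \text{ homogeneous} \mid V(p)=C\,\}
\]
is a $\Var{K}^{eq}$-definable map from $\Curves{K}$ into the set of equivalence classes of homogeneous polynomials (themselves sitting inside $(K,\Fin(K))^{eq}$, and hence inside $\Var{K}^{eq}$).

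The remaining task is to pick a canonical representative from each class $[p]_C$. First, the degree $d=\deg(C)$ is a definable function of $C$ in $\Var{K}^{eq}$, because after passing through the bi-interpretation it equals the (classical, finite) degree of any polynomial in $[p]_C$. Fix once and for all a definable total order on the finite set of monomials in $K[{\tt x}_0,{\tt x}_1,{\tt x}_2]$ of degree~$d$ (say, the lexicographic order on exponent tuples), which is definable uniformly in $d$. Given $[p]_C$, let $m$ be the $\prec$-smallest monomial whose coefficient in some (equivalently, every) representative is non-zero; this choice is definable. Since any two representatives differ by a non-zero scalar, there is a unique $p \in [p]_C$ in which the coefficient of $m$ equals $1$. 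Set $F(C)$ to be this unique representative. All the clauses in this construction are first-order expressible in $\Var{K}^{eq}$, so $F$ is definable.

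The main subtlety I anticipate is bookkeeping rather than mathematical difficulty: one has to verify that the ``polynomial'' $F(C)$ produced in $\Var{K}^{eq}$ really does correspond to an element of the standard polynomial ring $K[{\tt x}_0,{\tt x}_1,{\tt x}_2]$ and not to some non-standard object in the sense of Section~\ref{sec:non-standard-poly}. This is ensured by Remark~\ref{rem:standard poly} together with the fact that $\deg(C)$, interpreted in $\Var{K}^{eq}$, lands in the standard integers (the degree is finite and computed from the classical incidence data of $C$ with auxiliary curves), so $[p]_C$ is coded by a genuine finite-dimensional vector of coefficients from $K$. Once this is observed, the construction above produces $F$ as required.
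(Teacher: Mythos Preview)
Your proof is correct and follows the same approach as the paper: both extract $F$ from the definable self-interpretation of $\Var K$ produced by Theorem~\ref{thm:biinterprable}, under which a curve $C$ is sent to the equivalence class of homogeneous polynomials defining it. The paper's proof stops at the equivalence class, whereas you go one step further and normalize to a monic representative via a definable monomial order; this extra step is natural (and arguably required by the literal statement that $F(C)$ be an element of $K[{\tt x}_0,{\tt x}_1,{\tt x}_2]$), and your handling of the standard-versus-nonstandard polynomial issue via Remark~\ref{rem:standard poly} is a reasonable clarification that the paper leaves implicit.
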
	
\begin{proof}
Given the poset $\Var K$ we interpret $(K, \Fin(K))$, which in turn interprets a poset $P$ definably isomorphic to $\Var K$. A curve $C\in \Var K$ is interpreted in $P$ as an equivalence class of irreducible homogeneous polynomials defining $C$ and the interpretation is definable. We can definably pick a polynomial of minimal degree in the class with a $1$ as the first nonzero coefficient with respect to a definable ordering of the monomials.  
\end{proof}

\begin{rem}\label{rem:four points}
	In Theorem \ref{thm:curves-field} the interpretation of $K$ in $\Var K$ requires four parameters in $\Points K$. 
	We use the four points to define the system of projective coordinates determined by a line at infinity and elements $0,1$ on another line which will play the role of the neutral elements of the field operations.  The same holds for the interpretation of $(K, \Fin(K))$ in $\Var K$. On the other hand the interpretation of $\Var K$ in $(K, \Fin(K))$ is without parameters. 
\end{rem}

\begin{cor}\label{cor:var}
	Let $K$ be an algebraically closed field of characteristic zero. Then 
	 $\Var K \equiv \Var \C$ if and only if $K$ has infinite transcendence degree. 
\end{cor}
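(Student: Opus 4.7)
The plan is to reduce Corollary \ref{cor:var} to the analogous statement about $(K, \Fin(K))$ via the bi-interpretability of Theorem \ref{thm:biinterprable}, and then invoke the axiomatization of the theory of $(\C,\Fin(\C))$ furnished by Theorem \ref{thm:completeness}.

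First I would establish the transfer of elementary equivalence, namely that for algebraically closed fields $K$ of characteristic zero one has $\Var K \equiv \Var \C$ if and only if $(K,\Fin(K)) \equiv (\C,\Fin(\C))$. By Theorem \ref{thm:biinterprable}, these structures are bi-interpretable, but the interpretation of $(K,\Fin(K))$ inside $\Var K$ requires four parameters forming a projective frame (Remark \ref{rem:four points}). Given any sentence $\varphi$ in the language of $(K,\Fin(K))$, the bi-interpretation produces a formula $\varphi^*(\bar p)$ in the language of $\Var K$ with $\bar p$ ranging over tuples of points. Since any admissible frame yields an interpreted structure isomorphic to $(K,\Fin(K))$, $\varphi$ holds in $(K,\Fin(K))$ if and only if the $\Var K$-sentence $\exists \bar p\, \bigl( \bar p \text{ is a projective frame} \wedge \varphi^*(\bar p) \bigr)$ holds. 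The property of being a projective frame is expressible in the empty language of posets, so elementary equivalence transfers in both directions.

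Second, by Theorem \ref{thm:completeness} it remains to show that $(K,\Fin(K)) \models T_{\text{rec}} \cup T_\N$ precisely when $K$ has infinite transcendence degree over $\Q$. Axioms (1) and (2) of Definition \ref{defn:complete} are immediate. Since $\F(K) = \Fin(K)$ consists of genuine finite sets, the argument of Lemma \ref{lem:Z} shows $K_0 = \N$, giving definable characteristic zero (axiom (3)) and $\hn(K,\Fin(K)) = \N$; the latter automatically yields $T_\N$. By Remark \ref{rem:standard poly} the ring $K[{\tt x}]^{\text{def}}$ agrees with the classical polynomial ring $K[x]$, so definable algebraic closedness coincides with algebraic closedness, giving axiom (4).

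The crucial step is axiom (5), hyper-infinite transcendence degree. For $A \in \Fin(K)$, again by Remark \ref{rem:standard poly} the definable ring $\hQ[A]^{\text{def}}$ coincides with the ordinary ring $\Q[A]$, so the clause ``there exists $x \in K$ not definably algebraic over $A$'' unwinds to ``there exists $x \in K$ transcendental over $\Q(A)$''. Requiring this for every finite $A \subset K$ is exactly the statement that $K$ has infinite transcendence degree over $\Q$, which completes the reduction. The main point requiring care is the parameter-management in the bi-interpretation transfer of the first paragraph, but it goes through smoothly because frames can be uniformly existentially quantified and all choices give isomorphic interpreted structures.
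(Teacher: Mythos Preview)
Your proof is correct and follows essentially the same approach as the paper. The only noteworthy difference is in how the frame parameters are handled: the paper argues that any two projective frames are conjugate under an automorphism of $\Var K$ (so they have the same type, and one can pass to $(\Var K,\alpha)\equiv(\Var\C,\beta)$), whereas you observe directly that all frames yield isomorphic interpreted copies of $(K,\Fin(K))$ and therefore existentially quantify the frame away; these two devices are equivalent here. You are also more explicit than the paper in unpacking why, for genuine $\Fin(K)$, axioms (1)--(4) and $T_\N$ hold automatically while axiom (5) reduces to ordinary infinite transcendence degree, which the paper leaves implicit in its appeal to Theorem~\ref{thm:completeness}.
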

\begin{proof}
	Suppose that $K$ has infinite transcendence degree. Then by Theorem \ref{thm:completeness} $(K, \Fin(K)) \equiv (\C, \Fin(\C))$. Therefore $(K, \Fin(K))^{eq} \equiv (\C, \Fin(\C))^{eq}$. Since the structures $\Var K$ is definable without parameters in $(K, \Fin(K))^{eq}$ and $\Var \C$ is definable in $(\C, \Fin(\C))^{eq}$ by the same formula, we have $\Var K \equiv \Var \C$. 
	
	Suppose now that $\Var K \equiv \Var \C$. In the projective plane of a field $F$, any two quadruples of points in general position (i.e. no three of which lie on the same line), are conjugated by a unique projective collineation (\cite[Theorem 8.12]{Har}). This in turn induces (via the associated system of coordinates) a unique automorphism of $\Var F$. So if we fix such a quadruple $\alpha$ in $\Points K$ and a similar quadruple $\beta$ in $\Points \C$, we have that the types of $\alpha$ and $\beta$ over the empty set coincide, namely $(\Var K, \alpha) \equiv (\Var \C, \beta)$. By Theorem \ref{thm:biinterprable} and Remark \ref{rem:four points}, $(K, \Fin (K)) \equiv (\C, \Fin(\C))$, so by Theorem \ref{thm:completeness} $K$ has infinite transcendence degree.  
\end{proof}

\section{Appendix}\label{sec:appendix}

In this Appendix we give a self-contained proof of Proposition \ref{prop:2curves}. We follow an approach which was suggested by Rita Pardini in the case of smooth curves, and then we develop it further to account for singular curves. We also recall some basic definitions and facts from the theory of divisors on curves and of generalized Jacobians; our main references are \cite{HS} and \cite{Serre}. Throughout this section, all curves will be assumed to be irreducible and projective, even when not explicitly stated.

For the rest of the Appendix we fix an algebraically closed field $K$ of characteristic $0$. We stress that Proposition \ref{prop:2curves} does not hold in positive characteristic: see \cite[Corollario 1.6]{Mar} for a counterexample. 

Given a projective curve $C$ over $K$ and a point $P$ in $C$, a regular function on $C$ at $P$ is one that is given, on an affine neighbourhood $U \subseteq C$ of $P$, by a quotient of homogeneous polynomials over $K$ of the same degree with non-vanishing denominator at $P$. The regular functions at a point $P$ form a ring $\mathcal{O}_P$, with unique maximal ideal given by the functions vanishing at $P$. The residue field of this local ring is then isomorphic to $K$. The point $P$ is smooth on $C$ if and only if $\mathcal{O}_P$ is integrally closed (see \cite[Exercise A.1.1.5]{HS}), and in this case $\mathcal{O}_P$ is a discrete valuation ring.

The set of pairs $(U,f)$, where $U$ is a Zariski open subset of $C$ and $f$ is a function which is regular on $U$, modulo the equivalence relation $(U,f) \sim (V,g)$ if $f=g$ on $U \cap V$, forms a field with the operations of pointwise sum and product; the elements of this field are called \emph{rational functions} on the curve $C$. This field is denoted by $K(C)$.

Every smooth point $P$ of a curve $C$ determines a valuation on $K(C)$, denoted by $\ord_P$, as follows. Since $\mathcal{O}_P$ is local and integrally closed, it is a discrete valuation ring, and hence a principal ideal domain, so the maximal ideal has a generator $g$. The valuation $\ord_P(f)$ of a function $f \in \mathcal{O}_P$ is given by the highest power of $g$ which divides $f$ in $\mathcal{O}_P$, and the valuation of a function  $f \notin \mathcal{O}_P$ is $-\ord_P(1/f)$. The maximal ideal of $\mathcal{O}_P$ is then generated by any element of valuation $1$. For a fixed rational function $f$, it is easy to see that there are only finitely many points $P \in C$ for which $\ord_P(f) \neq 0$ and that $\ord_P(f) > 0$ if and only if $f(P) = 0$. 

\begin{defn}
	Let $C$ be a smooth projective curve. The group $\Div(C)$ of \emph{Weil divisors} on $C$ is the free abelian group generated by the points of $C$.
	
	Given a Weil divisor $D=\sum_{P \in C} n_P P$ (with $n_P \neq 0$ for finitely many $P \in C$), the \emph{degree} of $D$ is the integer $\sum_{P \in C} n_P$ and the \emph{support} of $D$ is the finite set of all the points $P\in C$ with $n_P\neq 0$. 
\end{defn}

It is immediate that the Weil divisors of degree $0$ form a subgroup of the group of Weil divisors, which is generated by the divisors of the form $P-Q$ for $P, Q \in C$. This subgroup is denoted by $\Div^0(C)$.

\begin{defn}
	Let $C$ be a smooth projective curve, $f$ a rational function on $C$. The \emph{divisor of $f$} is the Weil divisor $(f)=\sum_{P \in C} \ord_P(f) P$. Divisors of this form are called \emph{principal} .
	
	Two Weil divisors $D,D'$ are \emph{linearly equivalent} if $D-D'$ is a principal divisor.
\end{defn}

The principal divisors form a subgroup of $\Div^0(C)$, since the identity of $\Div(C)$ is the divisor of a non-zero constant and we have $(f^{-1})=-(f)$ and $(fg)=(f)+(g)$.

\begin{prop}\label{fact:jacobians}
	Let $C$ be a smooth projective curve.
	\begin{enumerate}
		\item The quotient of $\Div^0(C)$ by the subgroup of principal divisors is an algebraic group, the \emph{Jacobian} $\Jac(C)$ of $C$. 
		\item The group of $K$-points of $\Jac(C)$ contains a non-torsion point if and only if $C$ has positive genus.
	\end{enumerate}
\end{prop}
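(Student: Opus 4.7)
Part (1) is a deep classical result: the Jacobian of a smooth projective curve $C$ over $K$ is constructed as an abelian variety of dimension equal to the genus of $C$, whose group of $K$-points is naturally identified with $\Div^0(C)$ modulo linear equivalence. For this I would simply appeal to the detailed expositions in \cite{HS} and \cite{Serre}, rather than reconstruct the construction from scratch. The genus of $C$ in what follows means the dimension of $\Jac(C)$ as an algebraic group.

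For part (2) I would split on the genus. If the genus is $0$, Riemann--Roch forces $C \cong \mathbb{P}^1_K$ (a divisor of degree $1$ has $h^0 \geq 2$, producing a degree-one map to $\mathbb{P}^1$), and a direct computation using $K(\mathbb{P}^1) = K(t)$ shows that every degree-zero divisor on $\mathbb{P}^1$ is principal, since $(t - a) - (t - b)$ has divisor $[a] - [b]$. Hence $\Jac(C) = 0$ and every $K$-point is trivially torsion.

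The substantive direction is genus $\geq 1$. Here $\Jac(C)$ is a positive-dimensional abelian variety over $K$, and I need to exhibit a non-torsion $K$-rational point. The cleanest approach uses the Abel--Jacobi embedding $\phi : C \hookrightarrow \Jac(C)$, $P \mapsto [P - P_0]$, for a fixed base point $P_0$: this is a closed immersion when the genus is $\geq 1$, so $\phi(C)$ is a $1$-dimensional irreducible closed subvariety of $\Jac(C)$. Each torsion subscheme $\Jac(C)[n]$ is finite of order $n^{2g}$, so $\phi(C) \cap \Jac(C)[n]$ is finite for every $n$. If every point of $\Jac(C)(K)$ were torsion, then $\phi(C)(K)$ would be contained in the countable union $\bigcup_{n \geq 1}(\phi(C) \cap \Jac(C)[n])(K)$. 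For uncountable $K$ (in particular $K = \mathbb{C}$, which is the main case of interest for this paper) this contradicts $|C(K)| = |K|$ at once, completing the proof.

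The main obstacle is the countable case (e.g.\ $K = \overline{\mathbb{Q}}$), where the cardinality argument breaks down. In that setting one must invoke a genuinely deeper result, namely the theorem of Frey and Jarden that every positive-dimensional abelian variety over $\overline{\mathbb{Q}}$ has Mordell--Weil group of infinite rank; combined with the inclusion $\overline{\mathbb{Q}} \subseteq K$, this supplies the required non-torsion point in $\Jac(C)(K)$. This is the only step where the characteristic-zero hypothesis enters non-trivially, and it is the only place in the whole argument that cannot be handled by elementary means.
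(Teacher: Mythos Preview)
Your approach mirrors the paper's almost exactly: both split on the genus, identify genus-zero curves with $\mathbb{P}^1$ and compute $\Jac(\mathbb{P}^1)=0$ directly, and both observe that for uncountable $K$ the countability of the torsion subgroup of a positive-dimensional abelian variety already forces a non-torsion $K$-point. The paper simply cites \cite[Theorem A.8.1.1]{HS} for the algebraic-group structure and \cite[Theorem A in Appendix]{Rosen} for the existence of a non-torsion point in general, then records the cardinality shortcut as a remark; you develop the cardinality argument via the Abel--Jacobi embedding and defer the countable case to a separate citation.

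There is one genuine gap in your countable case. The Frey--Jarden theorem asserts infinite rank for $A(\overline{\mathbb Q})$ when $A$ is an abelian variety defined over a number field; invoking it via the inclusion $\overline{\mathbb Q}\subseteq K$ only works if $\Jac(C)$ descends to $\overline{\mathbb Q}$. But the proposition is stated for an arbitrary algebraically closed $K$ of characteristic zero, and a curve $C$ over a countable $K$ of positive transcendence degree need not be defined over $\overline{\mathbb Q}$, so Frey--Jarden does not apply as written. The paper sidesteps this by citing Rosen's result, which covers arbitrary algebraically closed $K$ not equal to $\overline{\mathbb F_p}$. You could repair your argument either by citing such a general statement, or by a specialization argument reducing to a curve of the same genus over $\overline{\mathbb Q}$, but the latter requires some care and is not as immediate as your sentence suggests.
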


\begin{proof}
	By \cite[Proposition A.2.1.3]{HS} every divisor on $\mathbb{P}^1$ of degree $0$ is principal, and by \cite[Theorem A.4.3.1]{HS} every smooth curve of genus $0$ is isomorphic to $\mathbb{P}^1$. Hence if $C$ has genus $0$ then $\Jac(C)$ is the trivial algebraic group.
	
	It remains to establish that if $C$ has positive genus then $\Jac(C)$ is an algebraic group and it has non-torsion points which are $K$-rational. The former statement is \cite[Theorem A.8.1.1]{HS}. The latter is \cite[Theorem A in Appendix]{Rosen}.
\end{proof}

Note that if $K=\mathbb{C}$, or more generally if $K$ is uncountable, the fact that the Jacobian of a smooth curve $C$ of positive genus has non-torsion points follows from cardinality considerations, since the Jacobian has finite $n$-torsion for every $n$, and hence its torsion group is countable, see \cite[Theorem A.7.2.7]{HS}.

\begin{defn}
	Let $C \subseteq \mathbb{P}^2(K)$ be an irreducible projective plane curve. We say $C$ is \emph{prefactorial} if for every $P \in C$ there is a curve $D \subseteq \mathbb{P}^2(K)$ such that $C \cap D=\{P\}$.
\end{defn}

Our goal is to show that plane curves are prefactorial if and only if they have degree at most 2. We first present the argument in the smooth case. Recall that smooth plane curves satisfy the \emph{genus-degree formula}: the genus of the smooth curve $C$ is given by $\frac{(\deg(C)-1)(\deg(C)-2)}{2}$.

\begin{prop}\label{prop:if-smooth-pref-then-deg}
	Let $C$ be a smooth plane projective curve. Assume $C$ is prefactorial. Then $C$ has genus $0$, hence degree at most 2.
\end{prop}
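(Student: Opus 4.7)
The plan is to exploit the Jacobian of $C$: prefactoriality will force the Abel--Jacobi image of $C$ to consist of torsion classes, and then Proposition \ref{fact:jacobians}(2) will force the genus to be zero, after which the genus--degree formula finishes the job.

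First, let $d=\deg(C)$ and fix a line $L_{0}\subseteq\mathbb{P}^{2}(K)$. Writing $H:=C\cap L_{0}$, this is a divisor of degree $d$ on $C$, and the basic fact I would use is that for any plane curve $E$ of degree $e$ not containing $C$, the intersection divisor $C\cap E$ is linearly equivalent to $e\cdot H$ on $C$ (since the two divisors are cut out by sections of $\mathcal{O}_{C}(e)$, whose ratio is a rational function on $C$). Now take an arbitrary $P\in C$ and, using prefactoriality, a curve $D_{P}$ with set-theoretic intersection $C\cap D_{P}=\{P\}$; by Bezout's theorem the corresponding intersection divisor is supported only at $P$ and has degree $d\cdot\deg(D_{P})$, hence equals $d\cdot\deg(D_{P})\cdot P$. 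Writing $e_{P}=\deg(D_{P})$, we obtain the crucial linear equivalence
\[
d\,e_{P}\cdot P\ \sim\ e_{P}\cdot H\qquad\text{on }C.
\]

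Second, I would fix an arbitrary base point $P_{0}\in C$ and apply the same relation with $P_{0}$ in place of $P$, giving $d\,e_{P_{0}}\cdot P_{0}\sim e_{P_{0}}\cdot H$. Multiplying the first relation by $e_{P_{0}}$ and the second by $e_{P}$ and subtracting eliminates $H$, yielding
\[
d\,e_{P}\,e_{P_{0}}\cdot(P-P_{0})\ \sim\ 0,
\]
so the class $[P-P_{0}]\in\Jac(C)$ is torsion (of order dividing $d\,e_{P}\,e_{P_{0}}$) for \emph{every} $P\in C$.

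Third, I would argue that this already forces the entire group $\Jac(C)(K)$ to be torsion. Indeed, any divisor of degree $0$ on $C$ can be written as $\sum n_{i}P_{i}$ with $\sum n_{i}=0$, and a straightforward rearrangement gives $\sum n_{i}P_{i}=\sum n_{i}(P_{i}-P_{0})$. Passing to $\Jac(C)$, the class of any degree-zero divisor is a $\mathbb{Z}$-linear combination of the classes $[P_{i}-P_{0}]$, all of which we have just shown to be torsion; any $\mathbb{Z}$-linear combination of torsion elements in an abelian group is torsion, so $\Jac(C)(K)$ is torsion.

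Finally, Proposition \ref{fact:jacobians}(2) says that $\Jac(C)(K)$ contains a non-torsion point whenever $g(C)>0$. The conclusion above rules this out, so $g(C)=0$. The genus--degree formula $g=\tfrac{(d-1)(d-2)}{2}$ then forces $d\in\{1,2\}$, as claimed. The main obstacle I anticipate is making the passage from the set-theoretic equality $C\cap D_{P}=\{P\}$ to the scheme-theoretic identity $C\cap D_{P}=d\,e_{P}\cdot P$ fully rigorous; this is just Bezout together with the observation that when the support of an effective divisor of degree $d\,e_{P}$ on a smooth curve is a single point, the divisor must be $d\,e_{P}$ times that point.
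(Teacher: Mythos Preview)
Your proof is correct and follows essentially the same route as the paper: both arguments use prefactoriality plus B\'ezout to show that every class $[P-Q]$ is torsion in $\Jac(C)$, deduce that $\Jac(C)(K)$ is torsion, and then invoke Proposition~\ref{fact:jacobians}(2) together with the genus--degree formula. The only cosmetic difference is that the paper writes down the rational function $F_{1}^{d_{2}}/F_{2}^{d_{1}}$ directly to exhibit the linear equivalence $d\,d_{1}d_{2}(P_{1}-P_{2})\sim 0$, whereas you pass through a fixed hyperplane section $H$; your subtraction step recovers precisely the same relation.
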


\begin{proof}
	We will show that the Jacobian $\Jac(C)$ is a torsion group, which, since $K$ is algebraically closed of characteristic $0$, is only possible if $\Jac(C)$ is the trivial group and $C$ has genus $0$. 
	
	Let $P_1 \neq P_2$ be points on $C$. Since $C$ is prefactorial, there are irreducible homogeneous polynomials $F_1$ of degree $d_1$ and $F_2$ of degree $d_2$ such that $F_i$ has a unique zero on $C$, at $P_i$, whose order is equal to the intersection multiplicity of the curve defined by $F_i$ and $C$; by B\'ezout's Theorem \cite[Theorem A.4.6.1]{HS} this is $d_i \deg(C)$.
	
	On the Zariski open dense subset of $C$ defined by $F_2(P) \neq 0$, the function $P \mapsto F_1(P)^{d_2}/F_2(P)^{d_1}$ is given by a quotient of homogeneous polynomials of the same degree $d_1d_2$, hence it is a rational function on $C$. Its divisor  is $\deg(C)d_1d_2(P_1-P_2)$, so $P_1-P_2$ is torsion in $\Jac(C)$. Since divisors of the form $P_1-P_2$ generate the subgroup $\Div^0(C)$, this must be all torsion. Therefore $C$ has genus $0$ by Proposition \ref{fact:jacobians}(2); since it is smooth, by the genus-degree formula its degree is at most 2.
\end{proof}

 We will now use \emph{generalized Jacobians} to show that in fact every prefactorial plane curve is smooth. Generalized Jacobians are algebraic groups which arise considering a smooth curve $C$ together with additional data, which encodes information on the singularities on a curve $C'$ with a birational surjective map $C \to C'$. When this data is trivial, i.e$.$ when $C'=C$ is smooth, we recover the usual Jacobian, but we will see that as soon as it is not trivial the generalized Jacobian has infinite rank. We will then adapt the proof of Proposition \ref{prop:if-smooth-pref-then-deg} to show that prefactoriality implies finite rank of the generalized Jacobian, concluding that prefactorial curves must be smooth of genus 0. 

For a singular point $P$ on a curve $C$, we will write $\widetilde{\mathcal{O}_P}$ for the integral closure of $\mathcal{O}_P$. A \emph{normalization} of a singular curve $C$ is a smooth curve $\tilde{C}$ with a finite birational morphism $\phi: \tilde{C} \rightarrow C$; this always exists and is unique up to isomorphism. The map $K(C) \rightarrow K(\tilde{C})$ defined by $f \mapsto f \circ \phi$ is then an isomorphism of fields (this is a property of birational maps), and under this isomorphism we have $\widetilde{\mathcal{O}_P} \cong \bigcap_{Q \in \phi^{-1}(P)} \mathcal{O}_Q$ for all $P \in C$. See for example \cite[Ch.~IV.1]{Serre}, and the references therein, for these and other facts about normalizations.

\begin{defn}
	Let $C$ be a smooth projective curve, $S \subseteq C$ a finite subset. 
	\begin{enumerate}
		\item A \emph{modulus} on $C$ supported on $S$ is a Weil divisor $\sum n_P P$ with support $S$ such that $n_P >0$ for all $P \in S$.
		\item If $\mathfrak{m} = \sum n_P P$ is a modulus supported on $S$, $f$ is a rational function on $C$, and $c \in K$ is a constant, we say that $f$ is \emph{congruent to $c$ modulo $\mathfrak{m}$}, and write $f \equiv c \mod \mathfrak{m}$, if $\ord_P(f-c) \geq n_P$ for all $P \in S$.
		\item A Weil divisor $D=\sum n_P P$ on $C$ is \emph{prime to $S$} if $n_P=0$ for all $P \in S$. 
		\item Given a modulus $\mathfrak m$ on $C$ supported on $S$, we say that two divisors $D,D'$ prime to $S$ are \emph{$\mathfrak{m}$-equivalent}, written $D \sim_{\mathfrak{m}} D'$, if $D-D'$ is the divisor of a rational function $g$ which is congruent to $1$ mod $\mathfrak{m}$.
	\end{enumerate} 
\end{defn}

We write $\Div(C \setminus S)$ for the set of divisors on $C$ prime to $S$, and $\Div^0(C \setminus S)$ for the subgroup of divisors of degree $0$. As for $\Div^0(C)$, the subgroup $\Div^0(C \setminus S)$ is generated by divisors of the form $P-Q$ for $P,Q \in C \setminus S$. 

\begin{lem}\label{lem:existence-modulus}
	Let $C$ be a projective curve, $P \in C$ a singular point, $\phi:\tilde{C} \rightarrow C$ a normalization, $\{Q_1,\dots,Q_\ell\}=\phi^{-1}(P)$. Then there is a modulus $\mathfrak m =   \sum_{i=1}^{\ell} n_i Q_i$ of degree  $n_1+\dots+n_\ell \geq 2$ supported on $\phi^{-1}(P)$ such that for all $f \in \mathcal{O}_P$, letting $c = f(P)$, we have $f \circ \phi  \equiv c \mod  \mathfrak m$.
\end{lem}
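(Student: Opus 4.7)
The plan is to split into cases according to the size $\ell$ of the fibre $\phi^{-1}(P)$, reading off the modulus from valuation-theoretic data on the normalisation.

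First I would record that, under the identification $K(C) \cong K(\tilde C)$ induced by $\phi$, we have $\widetilde{\mathcal{O}_P} = \bigcap_{i=1}^{\ell} \mathcal{O}_{Q_i}$, and that $\mathcal{O}_P \subsetneq \widetilde{\mathcal{O}_P}$ since $P$ is singular. For any $f \in \mathcal{O}_P$ with $c = f(P)$, the function $f - c$ lies in the maximal ideal $\mathfrak{m}_P$, hence pulls back to a function vanishing at every $Q_i$, so $\ord_{Q_i}((f - c) \circ \phi) \geq 1$ automatically. In the case $\ell \geq 2$ this is already enough: simply take $\mathfrak{m} = Q_1 + \dots + Q_\ell$, which is a modulus of degree $\ell \geq 2$.

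The real work is the unibranch case $\ell = 1$. Here I would set $\mathfrak{m} = 2 Q_1$ and aim to show that $\ord_{Q_1}((f - c) \circ \phi) \geq 2$, equivalently that no $g \in \mathfrak{m}_P$ pulls back to a uniformiser of $\mathcal{O}_{Q_1}$. I would argue by contradiction: suppose some $g \in \mathfrak{m}_P$ satisfies $\ord_{Q_1}(g \circ \phi) = 1$. Then $g \cdot \widetilde{\mathcal{O}_P}$ equals the maximal ideal $\mathfrak{m}_{Q_1}$ of $\widetilde{\mathcal{O}_P}$, and combined with the obvious inclusion $\mathfrak{m}_P \subseteq \mathfrak{m}_{Q_1}$ this forces $\mathfrak{m}_P \widetilde{\mathcal{O}_P} = \mathfrak{m}_{Q_1}$. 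Consequently the quotient $\widetilde{\mathcal{O}_P}/\mathfrak{m}_P \widetilde{\mathcal{O}_P} \cong K$ is one-dimensional over the residue field $\mathcal{O}_P/\mathfrak{m}_P = K$. Using that $\widetilde{\mathcal{O}_P}$ is a finitely generated $\mathcal{O}_P$-module, Nakayama's lemma produces a single $h \in \widetilde{\mathcal{O}_P}$ generating it over $\mathcal{O}_P$; writing $1 = a_0 h$ with $a_0 \in \mathcal{O}_P$, the element $a_0$ must be a unit of $\mathcal{O}_P$ (else $\ord_{Q_1}(h) < 0$), so $h \in \mathcal{O}_P$ and thus $\widetilde{\mathcal{O}_P} = \mathcal{O}_P$, contradicting the singularity of $P$.

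The main obstacle is supplying the input to Nakayama's lemma, namely the finite generation of $\widetilde{\mathcal{O}_P}$ as an $\mathcal{O}_P$-module. I would invoke this as a standard consequence of the normalisation theory of one-dimensional reduced algebras of finite type over $K$ in characteristic zero, equivalent to the finiteness of the delta invariant $\dim_K \widetilde{\mathcal{O}_P}/\mathcal{O}_P$ at a singularity; everything else in the argument is direct valuation-theoretic bookkeeping.
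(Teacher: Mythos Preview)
Your proposal is correct and follows essentially the same route as the paper: split on the number of branches, use $\sum Q_i$ when $\ell\ge 2$, and in the unibranch case argue by contradiction that a uniformiser in $\mathfrak m_P$ would force $\mathfrak m_P\widetilde{\mathcal O_P}=\mathfrak m_{Q_1}$ and hence, by Nakayama, $\widetilde{\mathcal O_P}=\mathcal O_P$. The only cosmetic difference is that the paper applies Nakayama directly with the generator $1$ of the one-dimensional quotient (so your detour through an auxiliary $h$ and the equation $1=a_0 h$ is unnecessary), and your explicit flagging of the finite-generation hypothesis for Nakayama is a point the paper leaves implicit.
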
 

\begin{proof} It is enough to prove the Lemma in the case $c = f(P) = 0$, otherwise we can replace $f$ with $f-c$.
	For all $f \in \mathcal{O}_P$ with $f(P)=0$ we must have $\ord_{Q_i}(f \circ \phi) > 0$ for all $i=1, \ldots, \ell$, so $f \circ \phi \equiv 0 \mod \sum_{i=1}^\ell Q_i$. This is enough to prove the statement as soon as $\ell >1$, so assume $\ell=1$ and let $\phi^{-1}(P)=\{Q\}$. Under this assumption, $\widetilde{\mathcal{O}_P} \cong \mathcal{O}_Q$, so $\widetilde{\mathcal O_P}$ is a discrete valuation ring and hence a principal ideal domain. 
	
	We claim that for every $f \in \mathcal{O}_P$ with $f(P) = 0$, we have $\ord_Q(f \circ \phi) \geq 2$. So the modulus $2Q$ satisfies the statement.
	Suppose for a contradiction that there is $f \in \mathcal{O}_P$ with $f(P) = 0$ such that $\ord_Q(f \circ \phi)=1$. Then $f$ generates the maximal ideal $\widetilde M$ of $\widetilde{\mathcal O_P}$ and belongs to the maximal ideal $M$ of $\mathcal{O}_P$, so $\widetilde{M}=M\widetilde{\mathcal{O}_P}$. Since $1$ generates the $K$-vector space $\widetilde{\mathcal{O}_P}/M\widetilde{\mathcal{O}_P} = \widetilde{\mathcal{O}_P}/\widetilde{M} \cong K$, by Nakayama's lemma \cite[Corollary 4.8]{Eis} we have that $1$ generates the $\mathcal{O}_P$-module $\widetilde{\mathcal{O}_P}$, so $\widetilde{\mathcal{O}_P}=\mathcal{O}_P$. This contradicts the singularity of $P$.		
\end{proof}

\begin{fact}[{\cite[Ch.~V.9 Theorem 1, p.88]{Serre}}]
	Let $C$ be a smooth projective curve, $S \subseteq C$ a finite set, $\mathfrak{m}$ a modulus supported on $S$. The quotient of $\Div^{0}(C \setminus S)$ by the subgroup of divisors which are $\mathfrak{m}$-equivalent to 0 is an algebraic group, denoted $J_{\mathfrak{m}}(C)$ and called the generalized Jacobian of $C$ with respect to $\mathfrak{m}$.
\end{fact}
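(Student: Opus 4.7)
The plan is to realize $J_{\mathfrak m}(C)$ as a commutative algebraic group fitting into an extension
$$0 \to L_{\mathfrak m} \to J_{\mathfrak m}(C) \to \Jac(C) \to 0$$
of the classical Jacobian by a linear algebraic group $L_{\mathfrak m}$ built from the local data of $\mathfrak m$. The motivation is that $\mathfrak m$-equivalence refines linear equivalence: two divisors prime to $S=\supp(\mathfrak m)$ which are linearly equivalent via a rational function $g$ are $\mathfrak m$-equivalent precisely when $g \equiv 1 \pmod {\mathfrak m}$, so the obstruction to passing from one equivalence to the other is controlled by local units modulo higher congruence.

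First I would construct $L_{\mathfrak m}$. For each $P \in S$ with multiplicity $n_P$ in $\mathfrak m$, let $U_P^{(n_P)} = 1 + \mathfrak m_P^{n_P} \subseteq \mathcal O_P^*$, where $\mathfrak m_P$ is the maximal ideal of $\mathcal O_P$. The quotient $\mathcal O_P^* / U_P^{(n_P)}$ is a commutative algebraic group isomorphic to $\mathbb G_m \times \mathbb G_a^{n_P - 1}$, via the valuation and the truncated power series expansion. Set
$$L_{\mathfrak m} = \left(\prod_{P \in S} \mathcal O_P^* / U_P^{(n_P)}\right) \Big/ K^*,$$
with $K^*$ embedded diagonally; this is a connected commutative linear algebraic group.

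Next I would define a homomorphism on divisors prime to $S$. Pick a base point $P_0 \in C \setminus S$, and for an effective divisor $D = \sum n_Q Q$ on $C\setminus S$ associate the class $[D - (\deg D) P_0] \in \Jac(C)$ together with a local symbol in $L_{\mathfrak m}$ built from uniformisers at the points of $S$. Extending by linearity and restricting to degree zero yields a surjection $\Div^0(C \setminus S) \to \Jac(C) \times L_{\mathfrak m}$ (the second factor controls how to lift a linear equivalence to a $\mathfrak m$-equivalence), whose kernel coincides with divisors of functions $g \equiv 1 \pmod{\mathfrak m}$. This identifies $J_{\mathfrak m}(C)$ set-theoretically with $\Jac(C) \times L_{\mathfrak m}$ and produces the extension above.

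The hard part is endowing the quotient with a genuine algebraic structure, rather than just a group structure. Here I would follow Serre's approach: work with the symmetric product $\mathrm{Sym}^n(C \setminus S)$ for $n$ sufficiently large, show that the Abel--Jacobi-type map to $J_{\mathfrak m}(C)$ is surjective with fibers isomorphic to projective spaces, and apply Rosenlicht's theorem on algebraic quotients to obtain a variety structure on the quotient. One must then verify that the addition of divisor classes lifts to a morphism of varieties, which requires a careful analysis of the local symbols and the interaction between the Abel--Jacobi map and the product of uniformisers; this is the technical heart of the theory of generalized Jacobians and the principal obstacle one would need to address.
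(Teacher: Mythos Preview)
The paper does not prove this statement at all: it is stated as a \emph{Fact} with a bare citation to \cite[Ch.~V.9 Theorem 1]{Serre}, and the subsequent arguments only use the existence of $J_{\mathfrak m}(C)$ together with the structure of the linear part $H_{\mathfrak m}$ (recalled in Lemma~\ref{fac:infinite rank gen jac}, again by citation). So there is nothing to compare your proposal against in the paper itself.

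Your sketch is essentially the strategy carried out in Serre's Chapter~V, and is a reasonable outline. One point deserves care: you write that the construction ``identifies $J_{\mathfrak m}(C)$ set-theoretically with $\Jac(C) \times L_{\mathfrak m}$'' via a surjection $\Div^0(C\setminus S)\to \Jac(C)\times L_{\mathfrak m}$. This overstates things: the extension $0\to L_{\mathfrak m}\to J_{\mathfrak m}(C)\to \Jac(C)\to 0$ need not split as algebraic groups, and one does not build $J_{\mathfrak m}(C)$ by mapping directly to a product. Rather, one constructs the quotient variety from symmetric powers (as you indicate in the last paragraph) and then reads off the extension structure afterwards. The local symbol you allude to is also not defined on divisors alone but depends on a choice of function; making this well defined on $\mathfrak m$-equivalence classes is part of the work. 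These are refinements rather than fatal gaps, and you correctly flag the variety structure as the technical heart.
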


We recall that the rank of an abelian group $G$ is the cardinality of a maximal subset of elements linearly independent over $\Z$, or equivalently the dimension of $G\otimes_\Z \Q$ as a $\Q$-vector space. 

\begin{lem}\label{fac:infinite rank gen jac}
	Let $C$ be a smooth projective curve, $\mathfrak{m}=\sum_{P \in S} n_P P$ a modulus on $C$ supported on $S$ of degree at least $2$. Then $J_{\mathfrak{m}}(C)$ has infinite rank.
\end{lem}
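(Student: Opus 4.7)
The plan is to study the natural surjection $\pi \colon J_{\mathfrak{m}}(C) \to \Jac(C)$ obtained by forgetting the $\mathfrak{m}$-congruence condition, and to show that its kernel alone already has infinite rank. An element of $\ker \pi$ is represented by a principal divisor $(f)$, where $f \in K(C)^*$ has no zero or pole on $S$; since the divisor determines $f$ up to a nonzero constant, one can attach to such a class the tuple of images of $f$ in each local ring $\mathcal{O}_P/\mathfrak{m}_P^{n_P}$ for $P \in S$, well defined in the product modulo the diagonal copy of $K^*$ coming from scalar multiplications. Unwinding the definitions of principal divisor and $\mathfrak{m}$-equivalence yields an injective group homomorphism
$$
\ker \pi \ \hookrightarrow \ \Bigl(\prod_{P \in S} (\mathcal{O}_P/\mathfrak{m}_P^{n_P})^*\Bigr) \Big/ \Delta(K^*).
$$

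The main technical step is proving that this map is also surjective, i.e.\ that given any prescribed local units $(\bar{f}_P)_{P \in S}$ one can find a rational function on $C$ whose image in $\mathcal{O}_P/\mathfrak{m}_P^{n_P}$ agrees with $\bar{f}_P$ for every $P$. This is a weak approximation statement which follows from the Chinese Remainder Theorem applied to the semilocal Dedekind domain $\bigcap_{P \in S} \mathcal{O}_P$: the ideals $\mathfrak{m}_P^{n_P}$ for distinct $P \in S$ are pairwise coprime, so the truncation map onto the product is surjective, and any preimage with nonvanishing constant term at each $P$ is automatically a unit in $\mathcal{O}_P$ for every $P \in S$.

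Once the kernel is so identified, the rest is a structural computation. Smoothness of $C$ at $P$ gives $\mathcal{O}_P/\mathfrak{m}_P^{n_P} \cong K[t]/(t^{n_P})$, whose unit group splits as $K^* \times (1 + tK[t]/(t^{n_P}))$; in characteristic zero the formal logarithm identifies $1 + tK[t]/(t^{n_P})$ with $(K,+)^{n_P - 1}$ as abelian groups. Therefore the target of the map above is isomorphic, as an abstract abelian group, to $(K^*)^{|S|-1} \times K^{\deg(\mathfrak{m})-|S|}$.

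To finish I split into the two cases allowed by $\deg(\mathfrak{m}) \geq 2$. If $|S| \geq 2$, the factor $(K^*)^{|S|-1}$ contains $K^*$, which has infinite rank over $\mathbb{Z}$ because the rational primes $2, 3, 5, \ldots$ remain multiplicatively independent modulo torsion in any algebraically closed field. Otherwise $|S| = 1$ forces $n_{P_0} \geq 2$, and the factor $K^{\deg(\mathfrak{m})-1}$ is nontrivial, with $(K, +)$ of infinite rank because $K \supseteq \overline{\mathbb{Q}}$ is infinite-dimensional over $\mathbb{Q}$. In either case $\ker \pi$, and hence $J_{\mathfrak{m}}(C)(K)$, has infinite rank. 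The main obstacle is the weak approximation step establishing surjectivity of the local truncation map; the remaining steps are routine structural manipulations together with elementary rank computations for $K^*$ and $(K,+)$.
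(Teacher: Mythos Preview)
Your proof is correct and follows essentially the same strategy as the paper: both arguments pass to the kernel of the natural surjection $J_{\mathfrak{m}}(C)\to \Jac(C)$, identify this kernel with $(K^*)^{|S|-1}\times K^{\deg(\mathfrak{m})-|S|}$, and conclude by the same case split on $|S|$. The only difference is packaging: the paper quotes Serre for the exact sequence $0\to H_{\mathfrak{m}}\to J_{\mathfrak{m}}(C)\to \Jac(C)\to 0$ and for the structure $U_P\cong \mathbb{G}_m\times \mathbb{G}_a^{n_P-1}$, whereas you reprove these facts by hand via weak approximation and the formal logarithm, making your argument more self-contained at the cost of the extra approximation step.
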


\begin{proof}
	By \cite[Ch.~V.13, Proposition 7, p. 92]{Serre}, there is an exact sequence of groups \[0 \rightarrow H_{\mathfrak{m}} \rightarrow J_{\mathfrak{m}}(C) \rightarrow \Jac(C) \rightarrow 0 \] where $H_{\mathfrak{m}}$ has the form $\left(\prod_{P \in S} U_P\right)/\mathbb{G}_m$ for appropriate groups $U_P$ depending on $P$. By \cite[Ch.~V.15, Corollary on p.94]{Serre}, since we are working in characteristic $0$, each group $U_P$ is isomorphic to $\mathbb{G}_m \times \mathbb{G}_a^{n_P-1}$. 
	
	If $|S| \geq 2$ then $H_{\mathfrak{m}}$ contains a copy of $\mathbb{G}_m$ and thus has infinite rank, and therefore so does $J_{\mathfrak{m}}(C)$. If $|S|=1$, then $\mathfrak{m}=n_P P$ for some $n_P \geq 2$, so $H_{\mathfrak{m}} \cong \mathbb{G}_a^{n_P-1}$ has again infinite rank.
\end{proof}

\begin{prop}\label{prop:if-pref-then-smooth}
	Let $C$ be a projective plane prefactorial curve. Then $C$ is smooth.
\end{prop}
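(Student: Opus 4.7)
The plan is to argue by contradiction: suppose $C$ has a singular point $P$, pass to the normalization $\phi : \tilde{C} \to C$, and show that prefactoriality forces a certain generalized Jacobian of $\tilde{C}$ to have finite rank, contradicting Lemma \ref{fac:infinite rank gen jac}.

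Set $S = \phi^{-1}(P)$. First, using Lemma \ref{lem:existence-modulus}, I would produce a modulus $\mathfrak m$ on $\tilde{C}$ supported on $S$ of degree at least $2$, with the property that every $f \in \mathcal{O}_P$ pulls back to a rational function on $\tilde{C}$ satisfying $f \circ \phi \equiv f(P) \pmod{\mathfrak m}$. By Lemma \ref{fac:infinite rank gen jac}, $J_{\mathfrak m}(\tilde{C})$ has infinite rank; the goal is to contradict this.

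For each $R \in C \setminus \{P\}$, prefactoriality yields an irreducible homogeneous polynomial $F_R$ of degree $d_R$ cutting out a plane curve whose intersection with $C$ is set-theoretically $\{R\}$. By B\'ezout, the pullback divisor $(F_R)_{\tilde{C}}$ is supported on $\phi^{-1}(R)$ and has total degree $d_R \deg(C)$. I would fix once and for all a smooth point $R_0 \in C \setminus \{P\}$ (which exists since $C$ has only finitely many singularities). For each such $R$, the rational function $f_R := F_R^{d_{R_0}}/F_{R_0}^{d_R}$ (ratio of homogeneous polynomials of equal degree) has divisor
$$D_R = d_{R_0}(F_R)_{\tilde{C}} - d_R (F_{R_0})_{\tilde{C}} \in \Div^0(\tilde{C} \setminus S).$$
Since neither $F_R$ nor $F_{R_0}$ vanishes at $P$, $f_R$ is regular and nonzero at $P$; applying Lemma \ref{lem:existence-modulus} to $f_R/f_R(P) \in \mathcal{O}_P$ (which equals $1$ at $P$) then gives $f_R/f_R(P) \circ \phi \equiv 1 \pmod{\mathfrak m}$, hence $D_R = 0$ in $J_{\mathfrak m}(\tilde{C})$.

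Specializing to \emph{smooth} points $R \in C \setminus \{P\}$, we have $(F_R)_{\tilde{C}} = d_R \deg(C) \cdot \tilde R$, where $\tilde R$ is the unique preimage. Combining $D_{R_1} = D_{R_2} = 0$ for two smooth points $R_1, R_2 \neq P$ yields that $\tilde R_1 - \tilde R_2$ is torsion in $J_{\mathfrak m}(\tilde{C})$. Let $T \subseteq \tilde{C}$ denote the finite set of points lying over singular points of $C$, and let $B$ be the subgroup of $\Div^0(\tilde{C} \setminus S)$ generated by differences $\tilde R_1 - \tilde R_2$ with $\tilde R_1, \tilde R_2 \in \tilde{C} \setminus T$. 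A straightforward splitting argument shows that every element of $\Div^0(\tilde{C}\setminus S)$ is congruent modulo $B$ to a divisor supported on the finite set $(T\setminus S) \cup \{\tilde R_0\}$; hence $\Div^0(\tilde{C} \setminus S)/B$ is finitely generated. Since $B$ maps to the torsion of $J_{\mathfrak m}(\tilde{C})$, we conclude that $J_{\mathfrak m}(\tilde{C})$ has finite rank, the desired contradiction. The main technical point to get right is the divisor bookkeeping on $\tilde{C}$ at the singularity $P$, which is exactly what Lemma \ref{lem:existence-modulus} is designed to handle; a secondary subtlety is verifying that the cokernel of $B \hookrightarrow \Div^0(\tilde{C} \setminus S)$ is finitely generated despite $\tilde{C}$ possibly having further singular fibers besides $S$.
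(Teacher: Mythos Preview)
Your proposal is correct and follows essentially the same route as the paper: contradiction via a singular point $P$, the modulus $\mathfrak m$ from Lemma~\ref{lem:existence-modulus} on the normalization, using prefactoriality to show that differences of smooth points are torsion in $J_{\mathfrak m}(\tilde C)$, and then a finite-rank cokernel argument over the remaining singular fibers. The only cosmetic differences are that the paper rescales the $F_i$ so that the ratio already takes the value $1$ at $P$ (rather than dividing by $f_R(P)$) and writes out the cokernel step as an explicit induction on $m(D)=\sum |n_{R_i}|$, whereas you invoke the ``straightforward splitting'' that every divisor is congruent modulo $B$ to one supported on $(T\setminus S)\cup\{\tilde R_0\}$.
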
 

\begin{proof}
	We prove this by contradiction, so assume $P_0$ is a singular point of $C$, let $\phi:\tilde{C} \rightarrow C$ be a normalization, and let $\mathfrak{m}$ be the modulus of degree at least 2 given by Lemma \ref{lem:existence-modulus}. Let $S:=\phi^{-1}(P_0)$ be the support of $\mathfrak m$, and denote by $S'$ the finite set of all $Q \in \tilde{C}$ such that $\phi(Q)$ is singular, so $S \subseteq S'$. We will show that prefactoriality of $C$ implies that the generalized Jacobian $J_{\mathfrak{m}}(\tilde{C})$ has finite rank, contradicting Lemma \ref{fac:infinite rank gen jac}.
	
	Let $Q_1 \neq Q_2 \in \tilde{C} \setminus S'$. Since $C$ is prefactorial, there are homogeneous polynomials $F_1$ and $F_2$ in $3$ variables, of degree $d_1$ and $d_2$ respectively, such that $\phi(Q_1)$ and $\phi(Q_2)$ are the unique zeros of $F_1$ and $F_2$ respectively on $C$. Since $\phi(Q_1) \neq P_0 \neq \phi(Q_2)$, we have $F_1(P_0) \neq 0 \neq F_2(P_0)$, so after rescaling we may assume $F_1^{d_2}(P_0)=F_2^{d_1}(P_0)$.
	
	Consider the rational function $f:=\left(\frac{F_1^{d_2}}{F_2^{d_1}} \circ \phi\right) $ on $\tilde{C}$. Since $F_1^{d_2}/F_2^{d_1} \in \mathcal{O}_{P_0}$, and $F_1^{d_2}(P_0)/F_2^{d_1}(P_0)=1$, we have, by Lemma \ref{lem:existence-modulus}, that $f \equiv 1 \mod \mathfrak{m}$. Moreover, the divisor of $f$ is $\deg(C)d_1d_2 (Q_1-Q_2)$, so $Q_1-Q_2$ is torsion in $J_\mathfrak{m}(\tilde{C})$. Since the elements of the form $Q_1-Q_2$ generate the subgroup $\Div^0(\tilde{C} \setminus S')$ of $\Div^0(\tilde{C} \setminus S)$, from this it follows that the image of $\Div^0(\tilde{C} \setminus S')$ is contained in the torsion subgroup of $J_{\mathfrak{m}}(\tilde{C})$.
	
	Now enumerate the set $S' \setminus S$ as $R_1,\dots,R_n$ and fix a point $P_1 \in \tilde C \setminus S'$. For notational simplicity, for the rest of this proof we identify a divisor in $\Div^0(\tilde{C} \setminus S)$ with its class in $J_{\mathfrak{m}}(\tilde{C})$. We claim that $J_{\mathfrak{m}}(\tilde{C})$ is generated by the set \[ G:=\{P-Q \mid P,Q \in \tilde{C} \setminus S'\} \cup \{R_i-R_j \mid i,j=1,\dots,n \} \cup \{P_1-R_i \mid i=1,\dots,n\}. \] This set is the union of a finite set with a subset of the torsion subgroup of $J_{\mathfrak{m}}(\tilde{C})$, so the claim directly implies that $J_{\mathfrak{m}}(\tilde{C})$ has finite rank.

    To prove the claim, observe that $J_{\mathfrak{m}}(\tilde C)$ is generated by $\{P-Q \mid P,Q \in \tilde{C} \setminus S \}$, so it suffices to show that every class of the form $P-Q$ lies in the subgroup generated by $G$. Now if $P,Q \in \tilde{C} \setminus S'$ then $P-Q \in G$. If $P,Q \in S' \setminus S$ then there are $i,j \in \{1,\dots,n\}$ such that $P=R_i$ and $Q=R_j$, so $P-Q \in G$. Finally, if $P \in \tilde{C} \setminus S'$ and $Q \in S' \setminus S$ then $Q=R_i$ for some $i \in \{1,\dots,n\}$, and we may write $P-R_i=(P-P_1) + (P_1 -R_i)$, so $P-R_i$ lies in the subgroup generated by $G$.
\end{proof}

\begin{thm}\label{thm:characterize-prefactoriality}
	Let $C \subseteq \mathbb{P}^2(K)$ be an irreducible projective plane curve. Then $C$ is prefactorial if and only if it has degree at most $2$.
\end{thm}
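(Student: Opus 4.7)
The ``only if'' direction is now immediate: if $C$ is prefactorial, then by Proposition~\ref{prop:if-pref-then-smooth} the curve $C$ is smooth, and then by Proposition~\ref{prop:if-smooth-pref-then-deg} it has genus $0$, so by the genus-degree formula its degree is at most $2$.

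For the ``if'' direction, we verify prefactoriality directly in the two cases $\deg(C)=1$ and $\deg(C)=2$. If $C$ is a line and $P\in C$, any other line $D$ through $P$ satisfies $C\cap D=\{P\}$, since two distinct lines in $\mathbb{P}^2(K)$ meet in exactly one point. If $C$ is an irreducible conic, we first observe that $C$ is smooth: a singular irreducible plane conic over an algebraically closed field would be a double line, contradicting irreducibility in the sense of reduced schemes (or, concretely, the defining quadratic form would be a square). Given $P\in C$, let $D$ be the tangent line to $C$ at $P$. By B\'ezout's theorem the total intersection multiplicity of $C$ and $D$ is $\deg(C)\cdot \deg(D)=2$, while the intersection multiplicity at $P$ is already at least $2$ because $D$ is tangent to $C$ at $P$. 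Hence $C\cap D=\{P\}$ set-theoretically, and $C$ is prefactorial.

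Combining the two directions gives the equivalence. The hard work has already been done in Propositions~\ref{prop:if-smooth-pref-then-deg} and~\ref{prop:if-pref-then-smooth}; the remaining content is the easy sufficiency, whose only non-trivial point is choosing the tangent line in the conic case.
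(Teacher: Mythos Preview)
Your proof is correct and follows exactly the same approach as the paper: the hard direction invokes Propositions~\ref{prop:if-pref-then-smooth} and~\ref{prop:if-smooth-pref-then-deg}, and the easy direction uses another line through $P$ in the line case and the tangent line at $P$ in the conic case. You give a bit more detail (smoothness of the conic, the B\'ezout count) than the paper does, but the argument is the same.
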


\begin{proof}
	$(\Rightarrow)$ follows from Propositions \ref{prop:if-smooth-pref-then-deg} and \ref{prop:if-pref-then-smooth}. 
	
	$(\Leftarrow)$ If $C$ is a line, then for any $P \in C$ any other line through $C$ intersects $C$ only at $P$. If $C$ is a conic, then for any $P \in C$ the tangent line to $C$ at $P$ intersects $C$ only at $P$.
\end{proof}

\subsection*{Acknowledgements} The proof of Lemma \ref{lem:sets-of-pairs} was suggested by Marcello Mamino. We are grateful to Rita Pardini for suggesting the proof Proposition \ref{prop:if-smooth-pref-then-deg} and reading the Appendix. We thank Rosario Mennuni for helpful comments and Vincenzo Mantova for suggesting to use generalized Jacobians for the proof of Theorem \ref{thm:characterize-prefactoriality}. We also thank Marcus Tressl for discussing with us the problem of the interpretation of the complex field in the poset of points and curves. We are grateful to an anonymous referee for a very careful reading of the manuscript and for many comments leading to a substantial improvement of the paper. The authors were supported by the project PRIN 2022 ``Modelli, insiemi e classificazioni", prot. 2022TECZJA.

\end{document}